\documentclass[10pt]{article}
\usepackage[nohead,margin=1.0in]{geometry}
\usepackage{amssymb, amsmath, amsthm, amsfonts}
\usepackage{graphicx,epsfig}
\usepackage{cite}
\usepackage{textcomp}
\usepackage{times}
\usepackage{float}
\graphicspath{{./pics/}}
\usepackage{color}
\usepackage{epstopdf}
\usepackage{mathrsfs}
\usepackage{subcaption}
\usepackage{threeparttable}
\usepackage{algorithm,algorithmic}
\usepackage{booktabs}
\usepackage{verbatim}
\usepackage[colorlinks,linktocpage,linkcolor=blue]{hyperref}
\usepackage{bm}
\numberwithin{equation}{section}
\newtheorem{theorem}{Theorem}[section]
\newtheorem{remark}{Remark}[section]
\newtheorem{proposition}{Proposition}[section]
\newtheorem{lemma}{Lemma}[section]

\newtheorem{definition}[theorem]{Definition}
\newtheorem{condition}[theorem]{Condition}
\newtheorem{assumption}[theorem]{Assumption}
\newtheorem{example}{Example}[section]

\allowdisplaybreaks

\def\II{(\Omega)}
\def\rbd{{\rm\bf d}}
\def\diam{{\rm diam}}
\def\dist{{\rm dist}}

\title{Error Analysis of the Inverse Conductivity Problem with Scattered Measurements\thanks{B. Jin is supported by Hong Kong RGC General Research Fund (14306824) and ANR / Hong Kong RGC Joint
Research Scheme (A-CUHK402/24) and a start-up fund from The Chinese University of Hong Kong. The work of Q. Quan is supported by UM Postdoctoral Fellow scheme from University of Macau. W. Zhang is partially supported by the National Natural Science Foundation of China under grant numbers No.12371423 and No.12241104, Guangdong Provincial Key Laboratory of Computational Science and Material Design (No.2019B030301001).}}

\author{Bangti Jin\thanks{Department of Mathematics, The Chinese University of Hong Kong, Shatin, New Territories, Hong Kong, P. R. China (\texttt{bangti.jin@gmail.com, b.jin@cuhk.edu.hk}).} \and Qimeng Quan\thanks{State Key Laboratory of Internet of Things for Smart City, University of Macau, Avenida da Universidade, Taipa, Macau, P. R. China. (qimengquan@um.edu.mo, quanqm@whu.edu.cn)} \and Wenlong Zhang\thanks{Department of Mathematics, Southern University of Science and Technology (SUSTech), Shenzhen, Guangdong, China. Guangdong Provincial Key Laboratory of Computational Science and Material Design, Southern University of Science and Technology. (zhangwl@sustech.edu.cn)}}

\begin{document}

%\subjclass{...}
\maketitle

\begin{abstract}
{In this work, we investigate the inverse problem of recovering the conductivity coefficient in an elliptic equation from noisy measurements collected at finitely many deterministic scattered points in the domain $\Omega$, and corrupted by random noise. Inspired by the regularity analysis, we propose a numerical scheme based on the regularized least-squares formulation with a $W^{1,4}(\Omega)$ penalty, and discretize the regularized problem using the Galerkin finite element method with continuous piecewise linear elements. Under suitable assumptions on the problem data, we provide an error analysis of the regularized solution and its Galerkin approximation. We establish $L^2(\Omega)$ error bounds in a high-probability sense, which depend explicitly on the regularization parameter $\gamma$, the number $n$ of data points and the mesh size $h$. We also present numerical experiments to illustrate the theoretical findings.}\\
\noindent\textbf{Keywords}: inverse conductivity problem, Tikhonov regularization, finite element method, error estimate
\end{abstract}

\section{Introduction}
Let $\Omega\subset \mathbb{R}^d\ (d=2,3)$ be a convex bounded polygon / polyhedron with a boundary $\partial\Omega$. Consider the following boundary value problem: 
\begin{equation}\label{eqn:gov}
   \left\{\begin{aligned}
    -\nabla\!\cdot\!(q\nabla u) & = f, \ &\mbox{in}&\ \Omega, \\
    u&=0, \ &\mbox{on}&\ \partial\Omega,
	\end{aligned}
	\right.
\end{equation}
where the function $f\in L^2(\Omega)$ is a given source. The conductivity $q$ belongs to the admissible set 
\begin{equation*}
	\mathcal{A} = \{q\in L^\infty\II: c_0\leq q(x)\leq c_1 \mbox{ a.e. in } \Omega\},
\end{equation*}
with $0< c_0<c_1<\infty$. We indicate by $u(q)$ the dependence of the solution $u$ to problem \eqref{eqn:gov} on the conductivity $q$. The model \eqref{eqn:gov} arises in several important physical processes, e.g., groundwater flow, including the hydraulic diffusivity in a confined inhomogeneous aquifer \cite{frind1973galerkin,yeh1986review}. 
The inverse conductivity problem (ICP) is to estimate the conductivity $q^\dag\in \mathcal{A}$ from finitely  many noisy pointwise measurements $\bm m :=(m_i)_{i}^n$, collected at deterministic locations $(x_i)_{i=1}^n\subset\Omega$ (see Assumption \ref{ass:quasi-points} for more details) and corrupted by additive noise:
\begin{equation}\label{eqn:data}
   m_i = u(q^\dagger) (x_i) + \xi_i,\quad i = 1,\dots,n.
\end{equation}
The random noise $\bm\xi=(\xi_i)_{i=1}^n$ follows independent and identically distributed (i.i.d.) sub-Gaussian distributions with zero mean $\mathbb{E}[\bm \xi ]= \bm 0$ and noise level $\sigma$; see the appendix for the definition of sub-Gaussian random variables.
In practice, the measurement data $\bm{m}$ are typically acquired using a finite number of sensors of limited precision at fixed locations. Thus the model \eqref{eqn:data} reflects practically realistic physical constraints.

Due to the inherent ill-posedness, suitable regularization strategies are required for the stable reconstruction of the conductivity $q$. One common strategy is to first construct a regularized functional in the framework of variational regularization \cite{Engl:1996, ItoJin:2015}, and then to approximate the functional using suitable discrete schemes, e.g., finite element method (FEM).
There has been significant progress in the analysis of the ICP and its numerical approximations, including stability estimate \cite{Alessandrini:1986,Bonito:2017} and convergence (rates) of the continuous and discrete schemes \cite{zou:1998,keung1998numerical,Hao:2010}.  Hao et al \cite{Hao:2010} derived the convergence rate on the regularized solution using the source condition. In the case of a Neumann boundary condition, \cite{Falk:1983} approximated the output least-squares formulation using the Galerkin FEM, and derived error bounds for the discrete reconstruction. The analysis is based on first taking a suitable test function in the weak formulation to derive a weighted error estimate, and then removing the weight function using the non-zero gradient condition \cite[p. 539]{Falk:1983}. See also \cite{Wang:2010} for relevant results in the regularized case. For problem \eqref{eqn:gov},  \cite{JinZhou:2021SINUM}  investigated P1 elements for  approximating the regularized problem, and derived error estimates on the discrete approximation. With a proper choice of the mesh size $h$ and the regularized parameter $\gamma$, the error bound depends sublinearly on the noise level. The key in the analysis in \cite{Falk:1983,Wang:2010,JinZhou:2021SINUM} is to combine conditional stability with the FEM analysis; See \cite{Cen:2025} for an overview.
These works rest on two key premises: the measurement noise is deterministic and of small magnitude, and the observational data is available almost everywhere in the whole domain $\Omega$. However, practical scenarios modeled by the scheme \eqref{eqn:data} violate these two key assumptions, and the analysis therein does not apply directly. 

So far the convergence analysis of the measurement scheme \eqref{eqn:data} remains rather limited in the context of PDE parameter identification \cite{chen2022stochastic,SunZhang:2025sinum,jin2024stochastic}.  Chen et al \cite{chen2022stochastic} investigated a Galerkin FEM scheme for identifying a spatially dependent source in a parabolic model, with pointwise random measurements at the terminal time, and established error bounds in weaker topologies for both continuous and discrete solutions. The key tools in  \cite{chen2022stochastic} include the spectral decomposition of the linear forward operator and noise separation technique in the variational formulation of the state. See also \cite{SunZhang:2025sinum} for  recovering a space-dependent source in a parabolic equation from boundary measurements.  \cite{jin2024stochastic} investigated the inverse potential problem for a second-order elliptic equation with scattered measurements, and under the $H^1(\Omega)$ penalty, derived error bounds on the continuous and discrete regularized approximation. In the ICP, since $q$ enters into the leading term of the elliptic operator, the solution operator has very limited smoothing property for low-regularity conductivity $q$. Thus the standard $H^1(\Omega)$ penalty does not appear sufficient for the analysis of the regularized problem.

In this work, we develop a reconstruction scheme for the ICP based on a $W^{1,4}(\Omega)$ penalty on the conductivity $q$ and discretize the regularized problem using the Galerkin FEM. First, we establish the well-posedness of the regularized problem and derive the following Lipschitz stability estimate respectively in Proposition \ref{prop:con-wellposedness} and Lemma \ref{lem:reg-u}:
\begin{equation}\label{intro-Lip-estimate}
    \|u(q)-u(\widetilde{q})\|_{H^2(\Omega)}\leq c\|\widetilde{q}-q\|_{W^{1,4}\II}\|\widetilde q\|_{W^{1,4}\II}^\zeta\|q\|_{W^{1,4}\II}^\zeta, \quad \forall q, \widetilde{q}\in W^{1,4}(\Omega)\cap \mathcal{A},
\end{equation}
where the exponent $\zeta=2$ for $d=2$ and $\zeta = 4$ for $d=3$. Second, in Theorem \ref{thm:con-err-conduc}, we provide an error bound for the regularized solution $q^*$. This is achieved by first establishing high‑probability bounds for the regularized solution $q^*$ in the $W^{1,4}(\Omega)$ norm and for the state approximation $u(q^\dag)-u(q^*)$ in the semi‑discrete norm $\|\cdot\|_n$ in Lemma \ref{lem:apprx-u-bound-q}:
\begin{equation}\label{intro-apprx-u-bound-q}
\mathbb{P}(\|u(q^*)-u^\dagger\|_n\geq \gamma\rho_0^{\frac\kappa2}z) < 2\exp(-cz^2)\quad\mbox{and}\quad\ \mathbb{P}(\| q^*\|_{W^{1,4}(\Omega)}\geq \rho_0z) < 2\exp(-cz^2),\ \forall z>0,
\end{equation}
where $\mathbb{P}$ denotes the law of the data $\bm{m}$, $\kappa = 4\zeta$, $\rho_0$ is defined in \eqref{eqn:rho0-gamma}, and $\gamma>0$ is the regularized parameter, and then combining these bounds with the conditional stability estimate in \cite[Theorem 3.2]{Bonito:2017}. The estimate \eqref{intro-Lip-estimate} plays a crucial role in proving Lemma \ref{lem:apprx-u-bound-q}. Third and last, we analyze the discrete regularized problem (using the Galerkin FEM with continuous piecewise linear elements), and  establish a high-probability \textit{a priori} stochastic $L^2(\Omega)$ error bound in Theorem \ref{thm:err-conduc}. Since the discrete state $u_h(q_h^*)$ lacks sufficient Sobolev regularity, the argument in the discrete case differs substantially from that in the regularized formulation.

The rest of the paper is organized as follows. In Section \ref{sec:main-diss}, we describe the regularized formulation and the main theoretical results. In Sections \ref{sec:conv-cont} and \ref{sec:conv-disc}, we provide the proofs for Theorems \ref{thm:con-err-conduc} and \ref{thm:err-conduc}, respectively. In Section \ref{sec:numer}, we present numerical illustrations. In Appendix \ref{sec:pre}, we review preliminaries on sub-Gaussian random variables. For any $m\geq0$ and $p\geq1$, we denote by $W^{m,p}\II$ the standard Sobolev spaces of order $m$, equipped with the norm $\|\cdot\|_{W^{m,p}\II}$ and also write $H^{m}\II$ with the norm $\|\cdot\|_{H^m\II}$ when $p=2$ \cite{Adams2003Sobolev}. We denote by $(\cdot,\cdot)$ the $L^2\II$ inner product, and by $C(\overline{\Omega})$ the space of continuous functions on $\overline{\Omega}$. Throughout, we denote by $c$ a generic positive constant not necessarily the same at each occurrence but always independent of the mesh size $h$, the number $n$ of observation points $(x_i)_{i=1}^n$ and the penalty parameter $\gamma$. Let $\zeta = 2\chi_{d=2} + 4\chi_{d=3}$ and $\kappa = 4\zeta$, with $\chi_E$ being the characteristic function  of the set $E$.

\section{Main results and discussions}
\label{sec:main-diss}
\subsection{Regularized problem and FEM approximation}

To recover the conductivity $q$ from the discrete noisy data $\bm m$, we
employ the least-squares scheme with a $W^{1,4}\II$ penalty \cite{Engl:1996,ItoJin:2015}, by minimizing the following regularized functional:
\begin{equation}\label{eqn:conti-optim-prob-conduc}
	\min_{q\in \mathcal{A}} J_\gamma(q)=\big\|u(q) - \bm m\big\|_n^2 +\gamma\| q\|_{W^{1,4}\II}^\kappa,
\end{equation}
where the scalar $\gamma>0$ is the regularization parameter and the state $u\equiv u(q)\in H^1_0\II$ satisfies
\begin{equation}\label{eqn:conti-weak-conduc}	
	\big(q\nabla u,\nabla\varphi\big)=\big(f,\varphi\big), \quad\forall\varphi\in H^1_0\II.
\end{equation}
The discrete semi-norm is given by $\|v\|_n:=\sqrt{(v,v)_n}$ for every $v\in C(\overline{\Omega})$, with the discrete points $(x_i)_{i=1}^n\subset\Omega$ and for $ y\in\mathbb{R}^n$ and $v,w\in C(\overline{\Omega})$:
\begin{align*}
	(y,v)_n & = n^{-1}\sum^n_{i=1}y_iv(x_i)\quad\mbox{and}\quad
	(v,w)_n  = n^{-1}\sum^n_{i=1}v(x_i)w(x_i).
\end{align*}
Problem \eqref{eqn:conti-optim-prob-conduc}--\eqref{eqn:conti-weak-conduc} has at least one global minimizer $q^*\in \mathcal{A}\cap W^{1,4}(\Omega)$ $\mathbb{P}$ almost surely ($\mathbb{P}$ denotes the law of the data $\bm m$). Moreover, the minimizer $q^*$ depends continuously on the data  $\bm m$, cf. Proposition \ref{prop:con-wellposedness}. The choice of the $W^{1,4}(\Omega)$ penalty is motivated by the theoretical analysis: The choice allows deriving crucial analytic properties of the forward map, e.g., regularity esetimate and Lipschitz stability; see Lemma \ref{lem:reg-u} for the details. These properties play a crucial role in the analysis of both regularized solution and its discrete approximations. The choice of the exponent $\kappa$ is also crucial in the error analysis.

In practice, we employ the Galerkin FEM to discretize problem \eqref{eqn:conti-optim-prob-conduc}-\eqref{eqn:conti-weak-conduc}. Let $h\in(0,h_0]$ for some $h_0<1$ and  $\mathcal{T}_h:=\cup\{K_j\}_{j=1}^{N_h}$ be a quasi-uniform simplicial triangulation of the domain $\Omega$ into mutually disjoint open face-to-face
subdomains $K_j$, such that $\Omega:= {\rm Int}(\cup_j\{\overline{K}_j\})$ \cite[Section 5.3]{LarssonThomee:2003}. On each element $K\in\mathcal{T}_h$, we denote by $P_r(K)$ the space of polynomials of degree at most $r$ over $K$. Next we define the $H^1(\Omega)$ conforming space (P1 elements) by
\begin{equation*}
	V_{h}:=\{v_{h}\in H^1(\Omega):v_h|_{K}\in P_1(K),\, \forall K\in\mathcal{T}_h\} \quad \mbox{and}\quad X_h:= V_h\cap H_0^1\II.
\end{equation*}
The space $X_h$ and the set $\mathcal{A}_h:= V_h\cap\mathcal{A}$ are used to discretize the state $u$ and conductivity $q$, respectively. 

Now we formulate the Galerkin approximation of problem \eqref{eqn:conti-optim-prob-conduc}--\eqref{eqn:conti-weak-conduc} by
\begin{equation}\label{eqn:dis-optim-dis}
\min_{q_h\in \mathcal{A}_h} J_{\gamma,h}(q_h)=\big\|u_h(q_h)-\bm m\big\|_n^2 + \gamma\| q_h\|_{W^{1,4}\II}^\kappa,
\end{equation}
where the discrete state $u_h\equiv u_h(q_h)\in X_h$ satisfies
\begin{equation}\label{eqn:dis-weak-conduc}	
	\big(q_h\nabla u_h,\nabla\varphi_h\big)=\big(P_hf, \varphi_h\big), \quad\forall\varphi_h\in X_h.
\end{equation}
By the norm equivalence and compactness in finite-dimensional spaces and the argument of Proposition \ref{prop:con-wellposedness}, we deduce that there exists at least one global minimizer $q_h^*\in V_h$ to problem \eqref{eqn:dis-optim-dis}-\eqref{eqn:dis-weak-conduc} $\mathbb{P}$ almost surely.

\subsection{Error estimates}
Now we give $L^2\II$ error bounds on the regularized solution $q^*$ and the Galerkin approximation $q_h^*$. 
Throughout we assume a quasi-uniform distribution of the observational points $(x_i)_{i=1}^n$ over the domain $\Omega$. Let
$d_{\max}= {\rm sup}_{x\in \Omega} \mathop {\rm inf}_{1 \leq i \leq n} \|x-x_i\|_{\ell^2}$ and $ 
d_{\min}={\rm inf}_{1 \leq i \neq j \leq n} \|x_i-x_j\|_{\ell^2}$,
with $\|\cdot\|_{\ell^2}$ being the standard Euclidean norm of vectors. Assumption \ref{ass:quasi-points} is commonly used in  statistical regularization  \cite{chen2018stochastic,chen2022stochastic}. Roughly speaking, Assumption \ref{ass:quasi-points} ensures that the norms $\|\cdot\|_n$ and $L^2(\Omega)$ are comparable.

\begin{assumption}\label{ass:quasi-points}
The sampling points $(x_i)_{i=1}^n$ are scattered but quasi-uniformly distributed in $\Omega$: there exists a constant $b>0$ independent of $n$ such that $d_{\max} \leq bd_{\min}$ is valid for all large $n$. 
\end{assumption}

For the analysis of the regularized solution $q^*$, we make the following regularity assumption on $q^\dag$ and $f$.  
\begin{assumption}\label{ass:con-reg}
$q^\dagger\in W^{1,4}\II\cap\mathcal{A}$ and $f\in L^2\II$.
\end{assumption}

The next condition plays a crucial role in deriving an $L^2\II$ error bound. By \cite[Lemmas 3.3 and 3.7]{Bonito:2017}, it holds with $\beta=2$, if $\Omega$ is Lipschitz, $q^\dag\in\mathcal{A}$ and $f\in L^2\II$ with $f\geq c_f>0$, and $\beta=0$ if  $\Omega$ is $C^{2,\alpha}$, $q^\dag\in C^{1,\alpha}(\overline{\Omega})\cap\mathcal{A}$ and $f\in C^{0,\alpha}\II$ with $f\geq c_f>0$. The exponent $\beta\geq0$ quantifies the decay rate of the weight near $\partial\Omega$. The precise conditions on the problem data for ensuring the condition with $\beta\in (0,2)$ appear unknown so far.
\begin{condition}\label{Cond:positivity}
There exists $\beta\geq0$ such that $(fu^\dag+q^\dag |\nabla u^\dag|^2)(x)\geq c~\dist(x,\partial\Omega)^\beta~\mbox{a.e.}~ x\in \Omega$.
\end{condition} 

Now we can state the first main theorem on the regularized solution $q^*$.
\begin{theorem}\label{thm:con-err-conduc}
Let Assumptions \ref{ass:quasi-points} and \ref{ass:con-reg} hold, and let $q^*\in\mathcal{A}$ be a minimizer to problem \eqref{eqn:conti-optim-prob-conduc}-\eqref{eqn:conti-weak-conduc}. Fix $\tau\in(0,\frac14)$,
\begin{equation}\label{eqn:rho0-gamma}
\rho_0 = \|q^\dagger\|_{W^{1,4}\II} + \sigma n^{-\frac12}\quad\mbox{and} \quad \gamma^{\frac12 + \frac d8}= O(\sigma n^{-\frac12}\rho_0^s) \quad\mbox{with } s = \tfrac{-2\kappa+d}{4}.    
\end{equation}
Then with probability at least $1-2\tau$, with $\ell_\tau = \sqrt{\log\frac{2}{\tau}}$, there exists $c$, independent of $q^*$, $\gamma$, $\rho_0$, $\tau$ and $n$, such that
\begin{equation*}
\int_{\Omega}\Big(\frac{q^\dag - q^*}{q^\dag}\Big)^2(fu^\dag + q^\dag|\nabla u^\dag|^2)\ {\rm d}x \leq c\big(\gamma^{\frac12} + n^{-\frac2d}\big)^{\frac12}(\rho_0\ell_\tau)^{2\zeta+2}.
\end{equation*}
Moreover, if Condition \ref{Cond:positivity} holds, then
\begin{equation*}
	\|q^{\dag}-q^*\|_{L^2\II}\leq c\big[\big(\gamma^{\frac12} + n^{-\frac2d}\big)^{\frac12}(\rho_0\ell_\tau)^{2\zeta+2}\big]^{\frac{1}{2(1+\beta)}}.
\end{equation*}
\end{theorem}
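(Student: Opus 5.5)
The plan is to follow the roadmap in the introduction. First I combine the high-probability bounds of Lemma \ref{lem:apprx-u-bound-q} with the Lipschitz/regularity estimate \eqref{intro-Lip-estimate}. This turns the discrete-norm control of $u(q^*)-u^\dag$ into an $L^2(\Omega)$, and then $H^1(\Omega)$, bound. The result is fed into the weighted stability argument of \cite[Theorem 3.2]{Bonito:2017}.

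\emph{Step 1 (probabilistic bounds).} Take $z=\ell_\tau/\sqrt{c}$ in \eqref{intro-apprx-u-bound-q}. With probability at least $1-2\tau$, both
\[
\|u(q^*)-u^\dag\|_n\le c\gamma\rho_0^{\kappa/2}\ell_\tau
\qquad\text{and}\qquad
\|q^*\|_{W^{1,4}(\Omega)}\le c\rho_0\ell_\tau
\]
hold. I take the parameter coupling \eqref{eqn:rho0-gamma} to be exactly what Lemma \ref{lem:apprx-u-bound-q} needs.

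\emph{Step 2 ($\|\cdot\|_n$ to $L^2$ to $H^1$).} Set $e=u(q^*)-u^\dag$. By Lemma \ref{lem:reg-u} (taking $\widetilde q=q^\dag$),
\[
\|e\|_{H^2(\Omega)}\le c\,\|q^*-q^\dag\|_{W^{1,4}}\|q^*\|_{W^{1,4}}^\zeta\|q^\dag\|_{W^{1,4}}^\zeta\le c(\rho_0\ell_\tau)^{2\zeta+1}.
\]
Under Assumption \ref{ass:quasi-points}, the standard sampling inequality for $H^2$ functions with $d\le3$ gives
\[
\|e\|_{L^2}\le c\big(\|e\|_n+n^{-2/d}\|e\|_{H^2}\big).
\]
Interpolation $\|e\|_{H^1}\le\|e\|_{L^2}^{1/2}\|e\|_{H^2}^{1/2}$ then gives $\|\nabla e\|_{L^2}$. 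Using $\gamma\rho_0^{\kappa/2}\ell_\tau\lesssim \gamma^{1/2}(\rho_0\ell_\tau)^{2\zeta+1}$, which follows from the choice of $\gamma$ (small $\gamma$), I expect
\[
\|\nabla e\|_{L^2}\le c\,(\gamma^{1/2}+n^{-2/d})^{1/2}(\rho_0\ell_\tau)^{2\zeta+1}.
\]
Tracking the powers of $\rho_0$ and $\gamma$ here is routine but fiddly.

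\emph{Step 3 (weighted estimate).} Following Bonito et al., take the test function $\varphi=\frac{q^\dag-q^*}{q^\dag}u^\dag\in H_0^1(\Omega)$. This is admissible since $q^*,q^\dag\in W^{1,4}\cap\mathcal A$ and $u^\dag\in H^2\cap H^1_0\subset L^\infty$. Subtracting the two weak formulations gives $((q^\dag-q^*)\nabla u^\dag,\nabla\varphi)=(q^*\nabla e,\nabla\varphi)$. Expanding the gradient of $\varphi$ yields the identity
\[
\tfrac12\int_\Omega\Big(\tfrac{q^\dag-q^*}{q^\dag}\Big)^2\big(fu^\dag+q^\dag|\nabla u^\dag|^2\big)\,dx=(q^*\nabla e,\nabla\varphi)
\]
up to the sign conventions in \cite{Bonito:2017}. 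The right-hand side is bounded by $c\|\nabla e\|_{L^2}\|\nabla\varphi\|_{L^2}$. Here $\|\nabla\varphi\|_{L^2}\le c(\|q^\dag-q^*\|_{W^{1,4}}\|u^\dag\|_{L^\infty}+\|\nabla u^\dag\|_{L^4})\le c\rho_0\ell_\tau$, using $\|u^\dag\|_{H^2}\le c\|q^\dag\|_{W^{1,4}}^{\zeta}$ or similar. This gives the first inequality with exponent $2\zeta+2$.

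\emph{Step 4 (removing the weight).} Under Condition \ref{Cond:positivity}, split $\Omega$ into the boundary layer $\{\dist(x,\partial\Omega)<\epsilon\}$ and its complement. On the layer, the integrand of $\|q^\dag-q^*\|_{L^2}^2$ is bounded by $c\epsilon$ since $|q^\dag-q^*|\le c$. On the complement, it is bounded by $c\epsilon^{-\beta}$ times the weighted integral $W$. Optimizing $\epsilon=W^{1/(1+\beta)}$ gives $\|q^\dag-q^*\|_{L^2}^2\le cW^{1/(1+\beta)}$, which is the second claim. The main obstacle is Step 2: making the exponents of $\rho_0$ match exactly $2\zeta+2$, with no stray power of $\gamma$, requires using the coupling \eqref{eqn:rho0-gamma} carefully. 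If the naive interpolation leaves extra factors, I would absorb them via $\rho_0\ge\|q^\dag\|\ge c_0|\Omega|^{1/4}$, so that lower powers of $\rho_0$ are dominated by higher ones.
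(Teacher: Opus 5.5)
Your four steps follow the paper's proof exactly: the tail bounds from Lemma~\ref{lem:apprx-u-bound-q}; the $H^2$ Lipschitz bound combined with Lemma~\ref{lem:connect-seminorm-stdnorm} and Gagliardo--Nirenberg for $\|\nabla e\|_{L^2}$; the test function $\varphi=\frac{q^\dag-q^*}{q^\dag}u^\dag$ with Bonito's identity; and the boundary-layer splitting. A minor point: Lemma~\ref{lem:apprx-u-bound-q} actually gives $\gamma^{1/2}\rho_0^{\kappa/2}$. The $\gamma$ in \eqref{intro-apprx-u-bound-q} is a typo, so Step 2 needs no smallness of $\gamma$.

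There is one step where your justification does not deliver what you claim: the bound $\|\nabla\varphi\|_{L^2}\le c\rho_0\ell_\tau$ in Step 3. You control $\|u^\dag\|_{L^\infty}$ and $\|\nabla u^\dag\|_{L^4}$ through $\|u^\dag\|_{H^2}\le c\|q^\dag\|_{W^{1,4}}^\zeta$. Since $c$ must be independent of $\rho_0$, this only gives $\|\nabla\varphi\|_{L^2}\le c\rho_0^{\zeta+1}\ell_\tau$, and the final exponent becomes $3\zeta+2$ instead of $2\zeta+2$. Your fallback of absorbing factors via $\rho_0\ge c_0|\Omega|^{1/4}$ cannot fix this, because it can only raise exponents, not remove surplus ones.

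The fix, which is what the paper does, is to use only the $q$-uniform energy estimate $\|u^\dag\|_{H^1}\le c\|f\|_{L^2}$, with $c=c(c_0,\Omega)$:
\begin{itemize}
\item bound $\|\nabla(q^\dag-q^*)\,u^\dag\|_{L^2}\le\|\nabla(q^\dag-q^*)\|_{L^4}\|u^\dag\|_{L^4}$ using $H^1\hookrightarrow L^4$;
\item handle the term $(q^\dag-q^*)\nabla q^\dag\,u^\dag/(q^\dag)^2$ the same way, via $\|\nabla q^\dag\|_{L^4}\|u^\dag\|_{L^4}$;
\item bound $\|\frac{q^\dag-q^*}{q^\dag}\nabla u^\dag\|_{L^2}\le c\|\nabla u^\dag\|_{L^2}$ using the box constraint, so no $L^4$ norm of $\nabla u^\dag$ is needed.
\end{itemize}
This gives $\|\nabla\varphi\|_{L^2}\le c(\|q^\dag\|_{W^{1,4}}+\|q^*\|_{W^{1,4}}+1)\le c\rho_0\ell_\tau$ on the high-probability event. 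A $q$-uniform $L^\infty$ bound of De Giorgi--Stampacchia type for $u^\dag$ would also handle the first two terms, but the $H^2$ route you cite does not. With this change your argument is complete.
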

\begin{remark}\label{rem:con-rate}
Note that in \eqref{eqn:data}, the noise level at each point is fixed at $\sigma$, and the number $n$ of observation points can vary. The effective noise level roughly scales like $\frac{\sigma}{\sqrt{n}}$, when compared with the deterministic setting \cite{Engl:1996,ItoJin:2015}. The analysis thus focuses on the dependence of the error estimates with respect to $n$.
Theorem \ref{thm:con-err-conduc} gives an $L^2\II$ error bound on the regularized solution $q^*$ in a high probability sense. If the number $n$ of sampling points is large enough $($i.e., $\gamma\geq n^{-\frac4d}$$)$, then
    \begin{equation*}
     \|q^{\dag}-q^*\|_{L^2\II}\leq c\gamma^{\frac{1}{8(1+\beta)}} (\rho_0\ell_\tau)^{\frac{1+\zeta}{1+\beta}}.   
    \end{equation*}
    This estimate is consistent with the conditional stability in \cite[Theorem 3.2]{Bonito:2017} and the $L^2\II$ error bound of the deterministic model \cite[Corollary 3.3]{JinZhou:2021SINUM}. 
\end{remark}
Next, we analyze the discrete approximation $q_h^*$ under the following assumption. 
\begin{assumption}\label{ass:dis-reg}
    $q^\dagger\in H^2\II\cap W^{1,\infty}\II\cap\mathcal{A}$, and $f\in L^p\II$ with $p>d$.
\end{assumption}
Note that Assumption \ref{ass:dis-reg} implies the regularity estimate
$u^\dag\equiv u(q^\dag) \in   H^2\II\cap W^{1,\infty}\II\cap H_0^1\II.$
This enables fully utilizing the approximation property of $V_h$ in the FEM analysis. We define two crucial quantities:
\begin{align}
\eta :=& h^2 + \gamma^{\frac12}\rho_0^{\frac{\kappa}{2}} + \gamma^{\frac d8}h^{4(\frac12-\frac d8)}\rho_0^{-s} + (\gamma^{\frac12 + \frac{d}{8}}h^{2-\frac d2}\rho_0^{2\zeta+1-s})^{\frac12} \nonumber \\ 
&+ \gamma^{\frac12}(\gamma^{-\frac12+\frac d8}h^{4(\frac12-\frac d8)}\rho_0^{-s})^{\frac{\kappa}{2(\kappa-2\zeta-1)}}, \label{eqn:eta}\\
\omega:=& (\gamma^{-1} h^4)^{\frac1\kappa} +  \rho_0 + (\gamma^{-\frac12+\frac d8}h^{4(\frac12-\frac d8)}\rho_0^{-s})^{\frac 2\kappa} + (\gamma^{-\frac12 + \frac{d}{8}}h^{2-\frac d2}\rho_0^{2\zeta-s})^{\frac 1\kappa}\nonumber\\
 &+ (\gamma^{-\frac12+\frac d8}h^{4(\frac12-\frac d8)}\rho_0^{-s})^{\frac{1}{\kappa-2\zeta-1}}.\label{eqn:omega}  
\end{align}
The following theorem gives the second main result of this work.
\begin{theorem}\label{thm:err-conduc}
Let Assumptions \ref{ass:quasi-points} and \ref{ass:dis-reg} hold. Let $q_h^*\in\mathcal{A}_h$ be a minimizer to problem \eqref{eqn:dis-optim-dis}-\eqref{eqn:dis-weak-conduc}. Let $\rho_0$ and $\gamma$ be given by \eqref{eqn:rho0-gamma}, and let $\eta$ and $\omega$ be given by \eqref{eqn:eta} and \eqref{eqn:omega}, respectively. Then with probability at least $1-2\tau$, with $\ell_\tau=\sqrt{\log\frac{2}{\tau}}$, there holds
\begin{equation*}
\int_\Omega\Big(\frac{q^\dag - q_h^*}{q^\dag}\Big)^2(fu^\dag + q^\dag|\nabla u^\dag|^2)\ {\rm d}x\leq   c(\eta + h^2 + n^{-\frac2d})^{\frac12}(\omega \ell_\tau)^{2\zeta+2}.
\end{equation*}
Moreover, if Condition \ref{Cond:positivity} holds, then
\begin{equation*}
    	\|q^{\dag}-q_h^*\|_{L^2\II}\leq  c\big[(\eta + h^2 + n^{-\frac2d})^{\frac12}(\omega \ell_\tau)^{2\zeta+2}\big]^{\frac{1}{2(1+\beta)}}.
    \end{equation*}
\end{theorem}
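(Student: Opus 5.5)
The proof follows the pattern of Theorem \ref{thm:con-err-conduc}, with FEM errors tracked explicitly. There are three steps: a basic inequality for the discrete minimizer, an interpolation argument to control $u^\dag-u_h(q_h^*)$, and a weighted-stability test function.

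\textbf{Step 1: basic inequality.} We compare $J_{\gamma,h}(q_h^*)\le J_{\gamma,h}(\mathcal I_h q^\dag)$, where $\mathcal I_h$ is the Lagrange interpolant. Since $q^\dag\in W^{1,\infty}(\Omega)$, $\mathcal I_hq^\dag\in\mathcal A_h$, and $\|\mathcal I_hq^\dag\|_{W^{1,4}\II}\le c\|q^\dag\|_{W^{1,4}\II}$. Expanding $\bm m=u^\dag+\bm\xi$ gives
\begin{equation*}
\|u_h(q_h^*)-u^\dag\|_n^2+\gamma\|q_h^*\|_{W^{1,4}\II}^\kappa\le \|u_h(\mathcal I_hq^\dag)-u^\dag\|_n^2+c\gamma\rho_0^\kappa+2(\bm\xi,u_h(q_h^*)-u_h(\mathcal I_hq^\dag))_n .
\end{equation*}
The first term on the right is $O(h^4)$ in $\|\cdot\|_n$. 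This uses the nodal $L^\infty$ error estimate for P1 elements on convex domains with $u^\dag\in W^{1,\infty}\cap H^2$ (up to a log factor), together with the $O(h^2)$ conductivity perturbation $q^\dag-\mathcal I_hq^\dag$. This is why Assumption \ref{ass:dis-reg} is needed.

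\textbf{Step 2: noise term (main obstacle).} The noise term is the main obstacle. In the continuous case one uses the $H^2$ Lipschitz estimate \eqref{intro-Lip-estimate} together with a sub-Gaussian chaining / entropy bound over $H^2$ balls, which gives the $\gamma^{\frac12+\frac d8}$ scaling. The discrete state $u_h$ is not in $H^2$, so I would split
\begin{equation*}
u_h(q_h^*)-u_h(\mathcal I_hq^\dag)=\big[u(q_h^*)-u(\mathcal I_hq^\dag)\big]+\big[(u_h-u)(q_h^*)-(u_h-u)(\mathcal I_hq^\dag)\big].
\end{equation*}
The first bracket is handled exactly as in Lemma \ref{lem:apprx-u-bound-q}. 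It uses the interpolation inequality for $\|\cdot\|_n$ between $L^2$-type and $H^2$ norms, with entropy exponent $d/4$, and yields terms of the form $\sigma n^{-1/2}\|v\|_n^{1-\frac d4}\|v\|_{H^2}^{\frac d4}$. The second bracket is controlled via the FEM estimate $\|u(q_h)-u_h(q_h)\|_{L^\infty}\le ch^{2-\frac d2}\|u(q_h)\|_{H^2}$, with $\|u(q_h)\|_{H^2}\le c\|q_h\|_{W^{1,4}\II}^{\zeta}$ from Lemma \ref{lem:reg-u}. The corresponding sup of the noise process is then bounded by a uniform sub-Gaussian bound. This produces the $h^{4(\frac12-\frac d8)}$ and $h^{2-\frac d2}$ contributions.

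\textbf{Step 3: closing the bounds.} With $R=\|q_h^*\|_{W^{1,4}\II}$ and $D=\|u_h(q_h^*)-u^\dag\|_n$, the inequality becomes polynomial in $(R,D)$. I would resolve it case by case by Young's inequality, according to which term dominates. The exponents $\kappa/(\kappa-2\zeta-1)$ arise when the $R^{2\zeta+1}$ growth of the noise term is absorbed by $\gamma R^\kappa$. This yields, with probability $\ge1-2\tau$, the bounds $D\le c\,\eta\,\ell_\tau$ and $R\le c\,\omega\,\ell_\tau$. Hence, using Assumption \ref{ass:quasi-points} to pass from $\|\cdot\|_n$ to $L^2$ with an $n^{-\frac2d}$ error, together with the FEM error $h^2$,
\begin{equation*}
\|u^\dag-u(q_h^*)\|_{L^2\II}\le c(\eta+h^2+n^{-\frac2d})\,\omega^{\zeta}\ell_\tau .
\end{equation*}

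\textbf{Step 4: weighted stability.} Following \cite{Bonito:2017,JinZhou:2021SINUM}, I take the test function $\varphi=\frac{q^\dag-q_h^*}{q^\dag}u^\dag$ in the identity $((q^\dag-q_h^*)\nabla u^\dag,\nabla\varphi)=(q_h^*\nabla(u(q_h^*)-u^\dag),\nabla\varphi)$. The left side equals $\frac12$ of the weighted integral in the theorem, after integrating by parts the cross term using the equation for $u^\dag$. The right side is bounded by $\|\nabla(u(q_h^*)-u^\dag)\|_{L^2}\|\varphi\|_{H^1}$. I then interpolate $H^1$ between $L^2$ and $H^2$, with the $H^2$ part bounded by $c\omega^\zeta$, and use $\|\varphi\|_{H^1}\le c(\omega\ell_\tau)$. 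This gives the bound $c(\eta+h^2+n^{-\frac2d})^{\frac12}(\omega\ell_\tau)^{2\zeta+2}$.

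Finally, Condition \ref{Cond:positivity} together with boundedness of $q^\dag-q_h^*$ gives the $L^2$ bound. I split $\Omega$ into a boundary layer of width $\delta$ and its complement, and optimize in $\delta$, which produces the exponent $\frac1{2(1+\beta)}$.
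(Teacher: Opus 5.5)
Your architecture matches the paper's: a basic inequality against the interpolant of $q^\dag$, a split of the noise term into an $H^2$ part treated by Lemma \ref{Geer-entropy} and an FEM-error part, a case analysis in $(R,D)$, then the test function $\varphi=\frac{q^\dag-q_h^*}{q^\dag}u^\dag$ and a boundary-layer split. However, Step 2 has a genuine gap exactly where you locate the main obstacle.

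Since $q_h^*$ depends on $\bm\xi$, you need a $\psi_2$ bound on $\sup_{q_h\in\mathcal{A}_h}|(\bm\xi,(u_h-u)(q_h))_n|$, suitably normalized in $\|q_h\|_{W^{1,4}(\Omega)}$, that keeps the factor $\sigma n^{-\frac12}$. A deterministic envelope such as $\|u(q_h)-u_h(q_h)\|_{L^\infty(\Omega)}\le ch^{2-\frac d2}\|u(q_h)\|_{H^2(\Omega)}$ cannot give this. It only yields $|(\bm\xi,g)_n|\le\|g\|_{L^\infty(\Omega)}\,n^{-1}\sum_i|\xi_i|$, which is of order $\sigma\|g\|_{L^\infty(\Omega)}$. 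The gain $n^{-\frac12}$ comes from cancellation, and for a data-dependent $g$ it requires a chaining bound (Lemma \ref{lem:max-ineq}) together with an entropy estimate for the class. But $\{u_h(q_h)-u(q_h)\}$ is neither finite dimensional nor contained in an $H^2(\Omega)$ ball, so neither Lemma \ref{lem:cover-en-bound-finite} nor Lemma \ref{Geer-entropy} applies to it as it stands. Your ``uniform sub-Gaussian bound'' is precisely what has to be proved. Without it, the corresponding term lacks the factor $n^{-\frac12}$ present in $\gamma^{\frac d8}h^{2-\frac d2}\rho_0^{-s}=\sigma n^{-\frac12}\gamma^{-\frac12}h^{2-\frac d2}$. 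For $h=O(\gamma^{\frac14})$ it then no longer decays as $\gamma\to0$, so $\eta$ is not reached.

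The paper closes this gap with an $H^2$-conforming space $W_h$ and interpolant $\widetilde\Pi_h$. It splits $u_h(q_h^*)-u_h(\Pi_hq^\dag)$ into three pieces: $u_h(q_h^*)-\widetilde\Pi_hu(q_h^*)$, $\widetilde\Pi_hu(q_h^*)-\widetilde\Pi_hu(\Pi_hq^\dag)$ and $\widetilde\Pi_hu(\Pi_hq^\dag)-u_h(\Pi_hq^\dag)$.
The outer pieces lie in $X_h+W_h$, of dimension $O(h^{-d})$, and have size $O(h^2\|q_h\|_{W^{1,4}(\Omega)}^{2\zeta})$. After dividing by $\|u_h(q_h)-u_h(\Pi_hq^\dag)\|_n+\gamma^{\frac12}\|q_h\|_{W^{1,4}(\Omega)}^{\frac\kappa2}$ with $\kappa=4\zeta$, they lie in a ball of radius $O(h^2\gamma^{-\frac12})$ of that space. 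Lemmas \ref{lem:max-ineq} and \ref{lem:cover-en-bound-finite} then give the bound $c\sigma n^{-\frac12}h^{2-\frac d2}\gamma^{-\frac12}$. The middle piece lies in $W_h\subset H^2(\Omega)$, where Lemma \ref{Geer-entropy} applies.

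Your own split can also be rescued without $W_h$. Cover $\{u_h(q_h)\}\subset X_h$ and $\{u(q_h)\}$ (an $H^2$ ball, Lemma \ref{lem:cover-en-bound-Sobolev}) separately, and run the entropy integral only up to the diameter $O(h^2R^{2\zeta})$ of the difference class. This costs a factor $\sqrt{|\log h|}$.

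Two smaller points:
\begin{itemize}
\item In Step 1, the order-$h^2$ nodal $L^\infty$ estimate you invoke needs $W^{2,\infty}$-type regularity, which Assumption \ref{ass:dis-reg} does not provide. Use Lemma \ref{lem:point-err} instead.
\item Your Step 4 is sound and slightly cleaner than the paper's. You apply the identity directly with the continuous state $u(q_h^*)$ and then use $\|u(q_h^*)-u^\dag\|_n\le\|u_h(q_h^*)-u^\dag\|_n+ch^2\|q_h^*\|_{W^{1,4}(\Omega)}^{2\zeta}$. This avoids the terms ${\rm I}$ and ${\rm II}_1$ built on $P_h\varphi$. The FEM term there carries $\omega^{2\zeta}$ rather than $\omega^{\zeta}$, but the final exponent $2\zeta+2$ absorbs it.
\end{itemize}
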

\begin{remark}
Theorem \ref{thm:err-conduc} provides guidelines on choosing the mesh size $h$ and the penalty parameter $\gamma$. By taking $h = O(\gamma^{\frac14})$ in Lemma \ref{lem:approx-sol-uppbound-qh}, we have $\eta\leq c\gamma^{\frac12}\rho_0^{2\zeta+1}$ and $\omega\leq c\rho^2_0$. Thus with $\gamma\geq n^{-\frac4d}$, there holds 
\begin{equation*}
\|q^{\dag}-q_h^*\|_{L^2\II}\leq  c\gamma^{\frac{1}{8(1+\beta)}}\rho_0^{\frac{10\zeta+9}{4(1+\beta)}}\ell_\tau^{\frac{1+\zeta}{1+\beta}}.  
\end{equation*}
The convergence rate agrees with the continuous case in Remark \ref{rem:con-rate}. Notably, the exponent of the  factor $\rho_0$ is larger than that in the continuous setting due to the repeated use of the $H^2\II$ regularity in the FEM analysis, cf. Lemmas \ref{lem:reg-u} and \ref{lem:point-err}. 
\end{remark}

\section{Proof of Theorem \ref{thm:con-err-conduc}}\label{sec:conv-cont}
To prove Theorem \ref{thm:con-err-conduc}, we first recall useful Sobolev embedding results. We use the notation $\hookrightarrow$ and $\overset{c}{\hookrightarrow}$ to denote continuous and compact embeddings, respectively.
\begin{lemma}\label{lem:Sob-embed}
The following embedding results hold for $d = 1, 2, 3$:
\begin{equation*}
W^{1,4}\II \hookrightarrow L^\infty\II \quad \mbox{and}\quad H^1\II\hookrightarrow L^4\II.
\end{equation*}
\end{lemma}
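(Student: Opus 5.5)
The two embeddings are standard Sobolev embedding results. I would check each one against the critical exponent relation on a bounded Lipschitz domain; a convex polygon or polyhedron is Lipschitz, so it satisfies the cone condition required in \cite{Adams2003Sobolev}.

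For the first embedding $W^{1,4}\II\hookrightarrow L^\infty\II$, I use the Morrey-type case $kp>d$ with $k=1$ and $p=4$. This gives $W^{1,p}\II\hookrightarrow C^{0,1-d/p}(\overline{\Omega})\hookrightarrow L^\infty\II$ whenever $p>d$. Since $p=4>3\geq d$, the embedding holds for $d=1,2,3$, and we even get Hölder continuity with exponent $1-d/4>0$. The Lipschitz boundary is needed only to have a bounded extension operator $W^{1,4}(\Omega)\to W^{1,4}(\mathbb{R}^d)$, so that Morrey's inequality on the whole space transfers to $\Omega$.

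For the second embedding $H^1\II\hookrightarrow L^4\II$, I split by dimension.
\begin{itemize}
\item For $d=3$, the Sobolev embedding with $kp=2<3$ gives $H^1\II\hookrightarrow L^{p^*}\II$ with $p^*=\frac{dp}{d-p}=6$. Boundedness of $\Omega$ then gives $L^6\II\hookrightarrow L^4\II$ by Hölder's inequality.
\item For $d=2$ (the borderline case $kp=d$), $H^1\II\hookrightarrow L^q\II$ holds for every finite $q$, in particular $q=4$.
\item For $d=1$, $H^1\II\hookrightarrow C(\overline\Omega)\hookrightarrow L^4\II$.
\end{itemize}

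There is no genuine obstacle. The only point to state carefully is the domain regularity, namely that the Lipschitz/cone condition holds so that Adams' embedding theorem applies, together with the finite measure of $\Omega$ for the Hölder step.
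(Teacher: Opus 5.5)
Your proposal is correct and takes the same approach as the paper, which states this lemma without proof and simply recalls the standard Sobolev embedding theorem: the Morrey case $p=4>d$ for $W^{1,4}$, and the subcritical, borderline and one-dimensional cases for $H^1$, exactly as you check them. The only hypothesis to verify is that a convex polygon or polyhedron is Lipschitz, so the cone condition and the extension argument apply, and boundedness of $\Omega$ gives $L^6\hookrightarrow L^4$; you address both.
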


The following lemma gives the Sobolev regularity and Lipschitz continuity of the state $u(q)$ with respect to the parameter $q$.
\begin{lemma}\label{lem:reg-u}
Let $q, \widetilde{q}\in W^{1,4}\II\cap\mathcal{A}$, and $f\in L^2\II$. Then there exists $c$ independent of $q$ such that 
\begin{equation}\label{eqn:reg-u}
 \|u(q)\|_{H^2(\Omega)}\leq  c\|q\|^\zeta_{W^{1,4}\II}\|f\|_{L^2\II}.   
\end{equation}
Moreover, the following Lipschitz stability holds
\begin{equation}\label{eqn:cont-u}
    \|u(q)-u(\widetilde{q})\|_{H^2(\Omega)}\leq c\|\widetilde{q}-q\|_{W^{1,4}\II}\|\widetilde q\|_{W^{1,4}\II}^\zeta\|q\|_{W^{1,4}\II}^\zeta.
\end{equation}
\end{lemma}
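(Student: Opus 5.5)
The plan is to expand the divergence form, treat the problem as a Poisson problem on the convex domain, and then bootstrap in $d=3$. Since $q\in W^{1,4}\II\hookrightarrow L^\infty\II$ (Lemma \ref{lem:Sob-embed}) and $q\geq c_0$, the state $u=u(q)\in H^1_0\II$ satisfies, in the distributional sense,
\begin{equation*}
-\Delta u = q^{-1}\big(f + \nabla q\cdot\nabla u\big)=:g .
\end{equation*}
On the convex domain $\Omega$, the full $H^2$ regularity of the Dirichlet Laplacian gives $\|u\|_{H^2\II}\le c\|g\|_{L^2\II}\le c\big(\|f\|_{L^2\II}+\|\nabla q\|_{L^4\II}\|\nabla u\|_{L^4\II}\big)$. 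The energy estimate gives $\|u\|_{H^1\II}\le c\|f\|_{L^2\II}$ with $c$ depending only on $c_0$. The task is therefore to control $\|\nabla u\|_{L^4\II}$.

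The key step is an interpolation (Gagliardo--Nirenberg) inequality:
\begin{equation*}
\|\nabla u\|_{L^4\II}\le c\|\nabla u\|_{L^2\II}^{1-\frac d4}\|u\|_{H^2\II}^{\frac d4}.
\end{equation*}
This is the main obstacle, since one must absorb the $H^2$ term to obtain the stated power of $\|q\|_{W^{1,4}\II}$.

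In $d=2$ the exponent is $\frac12$. Young's inequality gives
\begin{equation*}
\|\nabla q\|_{L^4}\|\nabla u\|_{L^4}\le \epsilon\|u\|_{H^2}+c_\epsilon\|\nabla q\|_{L^4}^2\|\nabla u\|_{L^2}.
\end{equation*}
Absorbing the first term yields $\|u\|_{H^2}\le c(1+\|q\|_{W^{1,4}}^2)\|f\|_{L^2}$. Since $\|q\|_{W^{1,4}}\ge c_0|\Omega|^{1/4}$, the constant $1$ is dominated, and we get \eqref{eqn:reg-u} with $\zeta=2$.

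In $d=3$ the exponent is $\frac34$. Young's inequality with exponents $(4,\frac43)$ gives
\begin{equation*}
\|\nabla q\|_{L^4}\|\nabla u\|_{L^2}^{1/4}\|u\|_{H^2}^{3/4}\le \epsilon\|u\|_{H^2}+c_\epsilon\|\nabla q\|_{L^4}^4\|\nabla u\|_{L^2},
\end{equation*}
which gives $\zeta=4$. In both cases $\zeta$ is exactly the exponent produced by this absorption. If needed, the bound could first be established for smooth $q$ and then extended by density, but the a priori manipulation is legitimate as it stands, because $u\in H^2$ already follows from $g\in L^{2}$ via a bootstrap starting from $\nabla u\in L^2$ and Meyers' estimate. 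Alternatively one can avoid this by noting that the computation only uses finiteness of $\|u\|_{H^2}$ (Meyers' estimate gives $\nabla u\in L^{2+\epsilon}$, hence $g\in L^{r}$ with $r>1$, and iterating yields $H^2$).

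For the Lipschitz estimate \eqref{eqn:cont-u}, set $w=u(q)-u(\widetilde q)$. Subtracting the two weak formulations gives
\begin{equation*}
-\nabla\cdot(q\nabla w)=\nabla\cdot\big((q-\widetilde q)\nabla u(\widetilde q)\big)=:F .
\end{equation*}
Since $\widetilde u:=u(\widetilde q)\in H^2$ by the first part, $F=\nabla(q-\widetilde q)\cdot\nabla\widetilde u+(q-\widetilde q)\Delta\widetilde u\in L^2$. Using $H^1\hookrightarrow L^4$ and $W^{1,4}\hookrightarrow L^\infty$,
\begin{equation*}
\|F\|_{L^2}\le \|\nabla(q-\widetilde q)\|_{L^4}\|\nabla\widetilde u\|_{L^4}+\|q-\widetilde q\|_{L^\infty}\|\widetilde u\|_{H^2}\le c\|q-\widetilde q\|_{W^{1,4}}\|\widetilde u\|_{H^2}.
\end{equation*}
Applying \eqref{eqn:reg-u} to $w$ with source $F$ and conductivity $q$ gives $\|w\|_{H^2}\le c\|q\|_{W^{1,4}}^\zeta\|F\|_{L^2}$. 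Then applying \eqref{eqn:reg-u} once more to $\widetilde u$ with $\|f\|_{L^2}$ absorbed into $c$ yields
\begin{equation*}
\|w\|_{H^2}\le c\|q-\widetilde q\|_{W^{1,4}}\|q\|_{W^{1,4}}^\zeta\|\widetilde q\|_{W^{1,4}}^\zeta ,
\end{equation*}
which is \eqref{eqn:cont-u}.
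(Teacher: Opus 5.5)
Your proof is correct. The Lipschitz part is the same as the paper's: the equation for $w$ with divergence-form source, H\"older with $W^{1,4}(\Omega)\hookrightarrow L^\infty(\Omega)$ and $H^1(\Omega)\hookrightarrow L^4(\Omega)$ to bound $\|F\|_{L^2(\Omega)}$, then \eqref{eqn:reg-u} applied twice. The difference is in how you prove \eqref{eqn:reg-u}.

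The paper does not interpolate. It bootstraps with the $W^{2,p}$ regularity of the Dirichlet Laplacian along a finite chain of exponents: $(p,r)=(\frac43,2),(2,4)$ for $d=2$, and $(\frac43,2),(\frac32,\frac{12}{5}),(\frac{12}{7},3),(2,4)$ for $d=3$. At each step it uses $W^{2,p}(\Omega)\hookrightarrow W^{1,r}(\Omega)$, and each step costs one factor $\|q\|_{W^{1,4}(\Omega)}$, so $\zeta$ is simply the number of steps.

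You use only $H^2$ regularity, the Gagliardo--Nirenberg bound and a Young absorption. This gives $\zeta=\frac{4}{4-d}$ in one stroke and shows where the exponent comes from. The price is that absorption needs $\|u\|_{H^2(\Omega)}<\infty$ in advance. The paper's iteration delivers qualitative and quantitative regularity at once, but it has to invoke $W^{2,p}$ regularity for $p<2$ on convex polygons/polyhedra.

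Of your two justifications for finiteness, the density argument is the one that keeps your route self-contained. For $q_k\in W^{1,\infty}(\Omega)\cap\mathcal{A}$ the right-hand side $q_k^{-1}(f+\nabla q_k\cdot\nabla u)$ is already in $L^2(\Omega)$. The uniform bound then passes to the limit: $q_k\to q$ in $W^{1,4}(\Omega)$ gives uniform convergence, hence $u(q_k)\to u(q)$ in $H^1(\Omega)$, and you combine this with weak compactness in $H^2(\Omega)$. The Meyers-based bootstrap is unnecessary, since $\nabla u\in L^2$ alone already puts the right-hand side in $L^{4/3}$. Carried out to the end, that bootstrap is exactly the paper's proof.
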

\begin{proof}
We improve the solution regularity iteratively. Let $u\equiv u(q)$. Clearly we have
\begin{equation*}
   \left\{\begin{aligned}
    - \Delta u & = q^{-1}(f + \nabla q\!\cdot\! \nabla u), \ &\mbox{in}&\ \Omega, \\
    u&=0, \ &\mbox{on}&\ \partial\Omega.
	\end{aligned}
	\right.
\end{equation*}
By the full regularity pickup of the Dirichlet Laplacian $-\Delta$ in $W^{2,p}\II$ \cite[p. 105-111]{grisvard2011elliptic}, the box constraint of $\mathcal{A}$ and H\"{o}lder's inequality, there holds
\begin{equation}\label{reg-iter-ineq}
\|u\|_{W^{2,p}\II} \leq c(\|f\|_{L^p\II} + \|\nabla q\!\cdot\! \nabla u\|_{L^p\II})  \leq c(\|f\|_{L^p\II} + \|\nabla q\|_{L^4\II}\|\nabla u\|_{L^r\II}), 
\end{equation}
with the pair $(p,r)$ satisfying $\frac1p=\frac14+\frac1r$ to be determined. First,  integration by parts, the box constraint of the set $\mathcal{A}$ and the Cauchy-Schwarz inequality lead to 
\begin{equation*}
    c_0\|\nabla u\|_{L^2(\Omega)}^2 \leq \|q^\frac{1}{2}\nabla u\|_{L^2(\Omega)}^2 = (f,u) \leq \|f\|_{L^2(\Omega)}\|u\|_{L^2(\Omega)}.
\end{equation*}
This and Poincar\'{e} inequality imply the estimate $\|u\|_{H^1\II} \leq c\|f\|_{L^2\II}$. Let $(p,r) = (\frac{4}{3},2)$. In view of the estimate \eqref{reg-iter-ineq} and the embedding $W^{1,p}\II\hookrightarrow L^{\frac{p d}{d-p}}\II$ for $p<d$ \cite[Theorem 4.12, Case C]{Adams2003Sobolev}, there holds for $d=2$
\begin{equation}\label{iter-ineq-1st-step in 2d}
\|u\|_{W^{1,4}\II}\leq c\|u\|_{W^{2,\frac{4}{3}}\II} \leq c(1+\|\nabla q\|_{L^4\II})\|f\|_{L^2\II}.    
\end{equation}
Then choosing $(p,r) = (2,4)$ and using the estimates \eqref{reg-iter-ineq} and \eqref{iter-ineq-1st-step in 2d} yield
\begin{equation*}
\|u\|_{H^2\II} \leq c(\|f\|_{L^2\II} + \|\nabla q\|_{L^4\II}\|\nabla u\|_{L^4\II})\leq c\bigg(\sum_{k=0}^\zeta\|q\|_{W^{1,4}\II}^k\bigg)\|f\|_{L^2\II}.
\end{equation*}
The case $d=3$ follows analogously. In sum, we obtain 
\begin{equation*}
    (p,r) = \big\{(\tfrac{4}{3},2),~ (2,4) \big\}\ \mbox{for}\ d=2 \quad \mbox{and}\quad (p,r) = \big\{(\tfrac{4}{3},2),~ (\tfrac{3}{2},\tfrac{12}{5}), (\tfrac{12}{7},3), (2,4) \big\}\ \mbox{for}\ d=3.
\end{equation*}
Thus by the definition of $\zeta$ and the box constraint of $\mathcal{A}$, we obtain
\begin{equation*}
\|u\|_{H^2(\Omega)}\leq c\Big(\sum_{k=0}^{\zeta}\|q\|^k_{W^{1,4}\II}\Big)\|f\|_{L^2\II} \leq 
c\|q\|^\zeta_{W^{1,4}\II}\|f\|_{L^2\II}.
\end{equation*}
Next, let $w:=u-\widetilde{u}\equiv u(q) - u(\widetilde{q})$. Then the function $w$ satisfies
\begin{equation*}
   \left\{\begin{aligned}
    -\nabla\!\cdot\!(q\nabla w) & = -\nabla\!\cdot\![(\widetilde{q}-q)\nabla \widetilde{u}], \ &\mbox{in}&\ \Omega, \\
    w&=0, \ &\mbox{on}&\ \partial\Omega.
\end{aligned}
\right.
\end{equation*}
By H\"{o}lder's inequality, Lemma \ref{lem:Sob-embed}, and the estimate \eqref{eqn:reg-u}, we arrive at 
\begin{align*}
\|\nabla\!\cdot\![(\widetilde{q}-q)\nabla \widetilde{u}]\|_{L^2\II} &\leq \|\widetilde{q}-q\|_{L^\infty\II}\|\Delta \widetilde{u}\|_{L^2\II}  + \|\nabla(\widetilde{q}-q)\|_{L^4\II}\|\nabla \widetilde{u}\|_{L^4\II} \\ & \leq c\|\widetilde{q}-q\|_{W^{1,4}\II}\|\widetilde{u}\|_{H^2\II} \leq c\|\widetilde{q}-q\|_{W^{1,4}\II}\|\widetilde q\|^\zeta_{W^{1,4}\II}.
\end{align*}
Then we derive
\begin{align*}
    \|w\|_{H^2\II} & \leq c\|q\|^\zeta_{W^{1,4}\II}\|\nabla\!\cdot\![(\widetilde{q}-q)\nabla \widetilde{u}]\|_{L^2\II} \leq c\|\widetilde{q}-q\|_{W^{1,4}\II}\|q\|^\zeta_{W^{1,4}\II}\|\widetilde q\|^\zeta_{W^{1,4}\II}.
\end{align*}
This completes the proof of the lemma.
\end{proof}

The next result gives the existence of a minimizer $q^*$ to problem \eqref{eqn:conti-optim-prob-conduc}-\eqref{eqn:conti-weak-conduc}.
\begin{proposition}\label{prop:con-wellposedness}
Under Assumption \ref{ass:con-reg}, there exists at least one minimizer $q^*\in\mathcal{A}\cap W^{1,4}(\Omega)$ to problem \eqref{eqn:conti-optim-prob-conduc}-\eqref{eqn:conti-weak-conduc} $\mathbb{P}$ almost surely.
\end{proposition}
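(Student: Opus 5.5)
We argue by the direct method of the calculus of variations, pointwise for each fixed realization of the data $\bm m\in\mathbb{R}^n$. Since $\bm m$ is finite-dimensional and each $m_i$ is finite almost surely, it suffices to show existence of a minimizer for every fixed vector $\bm m$. First, $J_\gamma$ is bounded below by $0$ and is finite at some admissible point. For instance, the constant $q\equiv c_0$ lies in $\mathcal{A}\cap W^{1,4}\II$, and $u(c_0)\in H^2\II\hookrightarrow C(\overline\Omega)$ by Lemma \ref{lem:reg-u}, so $\|u(c_0)-\bm m\|_n$ makes sense. More generally, for every $q\in\mathcal{A}\cap W^{1,4}\II$ the point evaluations $u(q)(x_i)$ are well defined by Lemma \ref{lem:reg-u} and the embedding $H^2\II\hookrightarrow C(\overline\Omega)$ for $d\le3$. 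We restrict the minimization to $\mathcal{A}\cap W^{1,4}\II$, since $J_\gamma=+\infty$ otherwise. Let $m_\gamma:=\inf J_\gamma<\infty$, and pick a minimizing sequence $\{q^k\}\subset\mathcal{A}\cap W^{1,4}\II$ with $J_\gamma(q^k)\to m_\gamma$.

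Next, $\gamma\|q^k\|_{W^{1,4}\II}^\kappa\le J_\gamma(q^k)\le c$, so $\{q^k\}$ is bounded in the reflexive space $W^{1,4}\II$. We extract a subsequence (not relabelled) with $q^k\rightharpoonup q^*$ weakly in $W^{1,4}\II$. By the Rellich--Kondrachov theorem, $W^{1,4}\II\overset{c}{\hookrightarrow}C(\overline\Omega)$ for $d\le3$, since $4>d$, so $q^k\to q^*$ uniformly. The box constraint $c_0\le q^k\le c_1$ therefore passes to the limit and $q^*\in\mathcal{A}$. Weak lower semicontinuity of the norm gives $\|q^*\|_{W^{1,4}\II}^\kappa\le\liminf_k\|q^k\|_{W^{1,4}\II}^\kappa$.

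The main step is the continuity of the data-fit term, namely $u(q^k)(x_i)\to u(q^*)(x_i)$ for each $i$. Lemma \ref{lem:reg-u} gives the uniform bound $\|u(q^k)\|_{H^2\II}\le c\|q^k\|^\zeta_{W^{1,4}\II}\|f\|_{L^2\II}\le c$. We cannot use the Lipschitz estimate \eqref{eqn:cont-u} directly, since we only have weak convergence in $W^{1,4}\II$. Instead we use a compactness argument. A further subsequence satisfies $u(q^k)\rightharpoonup \bar u$ weakly in $H^2\II$, and hence $u(q^k)\to\bar u$ strongly in $H^1\II$ and in $C(\overline\Omega)$, again by compact embedding for $d\le3$. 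To identify $\bar u=u(q^*)$, we pass to the limit in \eqref{eqn:conti-weak-conduc}. For $\varphi\in H_0^1\II$ we write
$$(q^k\nabla u(q^k),\nabla\varphi)-(q^*\nabla\bar u,\nabla\varphi)=((q^k-q^*)\nabla u(q^k),\nabla\varphi)+(q^*\nabla(u(q^k)-\bar u),\nabla\varphi).$$
The first term tends to zero by the uniform convergence of $q^k$ and the $H^1$ bound on $u(q^k)$, and the second by the strong $H^1$ convergence. Hence $\bar u$ solves \eqref{eqn:conti-weak-conduc} with $q^*$. Uniqueness of the weak solution (Lax--Milgram, since $q^*\in\mathcal{A}$) gives $\bar u=u(q^*)$. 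By uniqueness of the limit, the whole subsequence converges, and point values converge because the convergence is in $C(\overline\Omega)$.

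Consequently $\|u(q^k)-\bm m\|_n^2\to\|u(q^*)-\bm m\|_n^2$. Combined with the lower semicontinuity of the penalty, this yields $J_\gamma(q^*)\le\liminf_k J_\gamma(q^k)=m_\gamma$, so $q^*\in\mathcal{A}\cap W^{1,4}\II$ is a global minimizer. Since the argument holds for every finite data vector $\bm m$, and $\bm m$ is finite $\mathbb{P}$ almost surely, the conclusion holds $\mathbb{P}$ almost surely. We expect the only delicate point to be the identification of the limit state above, which relies on the compact embeddings available for $d\le 3$ together with the uniform $H^2$ bound from Lemma \ref{lem:reg-u}.
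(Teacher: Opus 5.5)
Your proposal is correct and follows essentially the same direct-method argument as the paper: a minimizing sequence, weak compactness in $W^{1,4}(\Omega)$ and, via Lemma \ref{lem:reg-u}, in $H^2(\Omega)$, identification of the limit state by passing to the limit in the weak formulation, and then compact embedding $H^2(\Omega)\hookrightarrow C(\overline{\Omega})$ plus weak lower semicontinuity of the norm. The only differences are minor. You control $((q^k-q^*)\nabla u(q^k),\nabla\varphi)$ using uniform convergence of $q^k$, where the paper uses $L^4$ convergence paired with $\nabla u^*\in L^4(\Omega)$. You also explicitly check that $q^*\in\mathcal{A}$ and that $J_\gamma$ is finite, which the paper leaves implicit.
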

\begin{proof}
Fix any realization of the random data $\bm{m}$. By Assumption \ref{ass:con-reg} and Lemma \ref{lem:reg-u}, we have $u(q)\in H^2\II\hookrightarrow C(\overline{\Omega})$. Thus, the pointwise evaluation in $J_\gamma$ in \eqref{eqn:conti-optim-prob-conduc} makes sense. By the non-negativity of $J_\gamma$, there exists a minimizing sequence $(q^k)_{k=1}^\infty\subset\mathcal{A}$ such that
\begin{equation*}
    \lim_{k\to\infty}J_\gamma(q^k) = \inf_{q\in \mathcal{A}}J_\gamma(q).
\end{equation*}
The uniform boundedness of $(q^k)_{k=1}^\infty$ in $W^{1,4}\II$ and Lemma \ref{lem:reg-u} imply the uniform boundedness of $(u(q^k) )_{k=1}^\infty $ in $H_0^1(\Omega)\cap H^2(\Omega)$. One can extract two subsequences from $(q^k)_{k=1}^\infty$ and $(u(q^k))_{k=1}^\infty$, still denoted by $(q^k)_{k=1}^\infty$ and $(u(q^k))_{k=1}^\infty$, and some $q^*\in\mathcal{A}$ and $u^*\in H^2\II$ such that
\begin{equation*}
q^k \rightharpoonup q^* \mbox{ in } W^{1,4}\II\quad\mbox{and} \quad u(q^k)\rightharpoonup u^*\mbox{ in } H^2\II.
\end{equation*}
We claim the identity $u^* \equiv u(q^*)$. Indeed, by the weak formulation of $u(q^k)$, there holds
\begin{equation}\label{min-sequence-eqn}
    (q^k\nabla u(q^k), \nabla \varphi) = (f,\varphi), \quad \forall\varphi\in H_0^1\II.
\end{equation}
The box constraint of $\mathcal{A}$, H\"{o}lder's inequality and the compact Sobolev embedding $H^2\II\hookrightarrow H^1\II$ give
\begin{align*}
    &\lim_{k\to\infty}|(q^k\nabla (u(q^k) - u^*), \nabla \varphi)| \leq \lim_{k\to\infty}\|q^k\|_{L^\infty\II}\|\nabla (u(q^k) - u^*)\|_{L^2\II} \|\nabla\varphi\|_{L^2\II}=0.
\end{align*}
Similarly, the embeddings $ W^{1,4}\II\overset{c}{\hookrightarrow} L^4\II$ and $H^2\II\hookrightarrow W^{1,4}\II$ for $d=2, 3$ lead to
\begin{align*}
\lim_{k\to\infty}|((q^k-q^*)\nabla u^*, \nabla \varphi)| & \leq \lim_{k\to\infty}\|q^k-q^*\|_{L^4\II} \|\nabla u^*\|_{L^4\II}\|\nabla\varphi\|_{L^2\II} \\& \leq 
\lim_{k\to\infty}\|q^k-q^*\|_{L^4\II} \| u^*\|_{H^2\II}\|\nabla\varphi\|_{L^2\II}
= 0.    
\end{align*}
These two estimates and \eqref{min-sequence-eqn} yield the identity
\begin{equation*}
    (q^*\nabla u^*,\nabla \varphi) = (f,\varphi), \quad \forall\varphi\in H_0^1\II.
\end{equation*}
The definition of $u(q^*)$ implies  $u^* \equiv u(q^*)$. By the weak lower semicontinuity of the $W^{1,4}\II$ norm and the compact embedding $H^2(\Omega)\overset{c}{\hookrightarrow} C(\overline{\Omega})$, $q^*$ is a minimizer to problem \eqref{eqn:conti-optim-prob-conduc}-\eqref{eqn:conti-weak-conduc}.  Moreover, the minimizer $q^*$ depends continuously on the data perturbation (see, e.g., \cite[Theorem 4.2, p. 109]{ItoJin:2015} for the argument). 
Since the existence of a minimizer holds for any realization of the random data $\bm{m}$, the desired statement holds $\mathbb{P}$ almost surely.
\end{proof}

We use extensively the following 
lemma due to van de Geer on
stochastic convergence \cite[Lemma 8.4 and (10.6)]{geer2000empirical}. Using the terminology of the stochastic
convergence order, we denote a random variable $X = \mathcal{O}_p(z)$ if $X$ is a sub-Gaussian random
variable with zero expectation and parameter $z$.
\begin{lemma}
\label{Geer-entropy}
Let $G$ be a function space with $\log N(\varepsilon, B_G,\|\cdot\|_{n}) \leqslant  A \varepsilon^{-r}$, with $N(\varepsilon, B_G,\|\cdot\|_{n})$ being the local covering number of the unit ball $B_G$. Then there holds
\begin{equation*}
    \sup_{g \in G} \frac{\left|(\bm{\xi}, g)_{n}\right|}{\|g\|_{n}^{1-\frac r2}\|g\|_{G}^{\frac r2}}=\mathcal{O}_{p}\left( \sqrt{A} \sigma n^{-\frac12}\right).
\end{equation*}
\end{lemma}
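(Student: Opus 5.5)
The statement is van de Geer's lemma, quoted from \cite[Lemma 8.4 and (10.6)]{geer2000empirical}, so the plan is to rederive it by chaining with a peeling device. Throughout, $G$ is a normed function space with unit ball $B_G$ and $r\in(0,2)$.

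\textbf{Step 1: sub-Gaussian increments.} For fixed $g$, independence and the sub-Gaussianity of the $\xi_i$ (see the appendix) show that $(\bm\xi,g)_n=n^{-1}\sum_i\xi_ig(x_i)$ is sub-Gaussian with parameter $\sigma n^{-1/2}\|g\|_n$. Hence the process $g\mapsto(\bm\xi,g)_n$ has sub-Gaussian increments with respect to the metric $\sigma n^{-1/2}\|g-g'\|_n$.

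\textbf{Step 2: chaining on a localized ball.} By homogeneity of the ratio it suffices to take $\|g\|_G\le 1$, i.e.\ $g\in B_G$. Fix $\delta>0$ and apply Dudley's entropy integral to the class $\{g\in B_G:\|g\|_n\le\delta\}$, whose covering numbers are bounded by those of $B_G$. This gives
\begin{equation*}
\mathbb{E}\sup_{g\in B_G,\|g\|_n\le\delta}|(\bm\xi,g)_n|\le c\sigma n^{-\frac12}\int_0^\delta\sqrt{A}\,\varepsilon^{-\frac r2}\,{\rm d}\varepsilon=c\sqrt{A}\,\sigma n^{-\frac12}\delta^{1-\frac r2},
\end{equation*}
and the integral converges because $r<2$. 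The chaining tail bound also yields a sub-Gaussian deviation inequality for this supremum at scale $\sqrt{A}\sigma n^{-1/2}\delta^{1-r/2}$.

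\textbf{Step 3: peeling.} Split $B_G$ into the shells $2^{-j-1}<\|g\|_n\le2^{-j}$, $j\ge0$; the upper cutoff is available because $\|g\|_n\le c\|g\|_G$ in the applications, e.g.\ $G\subset C(\overline\Omega)$. On shell $j$ the ratio is bounded by $2^{(j+1)(1-r/2)}\sup|(\bm\xi,g)_n|$. Apply the tail bound of Step 2 on each shell with threshold $t\sqrt{A}\sigma n^{-1/2}$ inflated by a factor growing in $j$. A union bound over $j$ is then summable and gives
\begin{equation*}
\mathbb{P}\big(\sup_g\text{ratio}\ge t\sqrt{A}\sigma n^{-\frac12}\big)\le c\,e^{-ct^2}.
\end{equation*}
This is exactly the $\mathcal{O}_p(\sqrt{A}\sigma n^{-1/2})$ claim.

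\textbf{Main obstacle.} The delicate part is small $\|g\|_n$, where the denominator degenerates. Summability over the shells needs $1-\frac r2>0$, i.e.\ $r<2$, together with the entropy integral near $0$. Handling it requires either truncating at a scale of order $n^{-1/2}$ or using the finite-dimensionality of the evaluation vectors $(g(x_i))_i$. I would rely on van de Geer's treatment for these constants rather than rederive them.
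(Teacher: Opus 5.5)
Your overall plan (sub-Gaussian increments, chaining on $\|\cdot\|_n$-localized pieces of $B_G$, then peeling) matches the structure of van de Geer's proof of Lemma 8.4, which the paper quotes without proof. However, Step 3 does not close with the deviation inequality you extract in Step 2.

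Set $S_\delta:=\sup\{|(\bm\xi,g)_n|: g\in B_G,\ \|g\|_n\le\delta\}$. Suppose you only know that $S_\delta$ is sub-Gaussian at the scale $\sqrt A\sigma n^{-1/2}\delta^{1-r/2}$, which is what the $\psi_2$ maximal inequality (Lemma \ref{lem:max-ineq}) gives. On the shell $2^{-j-1}<\|g\|_n\le 2^{-j}$, the prefactor $2^{(j+1)(1-r/2)}$ then cancels $\delta^{1-r/2}=2^{-j(1-r/2)}$ exactly, and you only obtain
\[
\mathbb{P}\big(\text{ratio on shell } j\ge t\sqrt A\sigma n^{-\frac12}\big)\le 2e^{-ct^2},
\]
with no decay in $j$. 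The union bound over infinitely many shells therefore diverges. Inflating the threshold by factors $a_j\to\infty$ makes the series converge, but it only shows that the ratio on shell $j$ is at most $t a_j\sqrt A\sigma n^{-1/2}$. That is not a uniform bound, so the displayed conclusion of Step 3 does not follow.

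Your proposed remedies do not recover the stated rate either. On the unit sphere of $G$ the ratio is at most $\|\bm\xi\|_n\|g\|_n^{r/2}$ with $\|\bm\xi\|_n\approx\sigma$. A bottom cutoff would therefore have to sit near $n^{-1/r}$, not $n^{-1/2}$. That leaves order $\log n$ shells, and the union bound then costs an $n$-dependent factor (roughly $\sqrt{\log\log n}$).

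The missing idea is that the fluctuation of $S_\delta$ above its entropy-integral level is sub-Gaussian at the scale of the diameter, $\sigma n^{-1/2}\delta$, not at the scale of the mean. The tail form of Dudley's bound gives
\[
\mathbb{P}\Big(S_\delta\ge c\,\sigma n^{-\frac12}\big(\sqrt A\,\delta^{1-\frac r2}+u\,\delta\big)\Big)\le 2e^{-u^2},\qquad u\ge 0.
\]
This is what van de Geer's chaining inequality preceding Lemma 8.4 supplies: a tail exponent of order $n\delta^2/R^2$, with $R$ the radius of the localized class.

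On shell $j$, for $t$ above a fixed constant, you can take $u\simeq t\sqrt A\,2^{jr/2}$. The shell probabilities then become $2\exp(-ct^2A\,2^{jr})$, which sum to $Ce^{-ct^2}$ with no truncation at all. So summability comes from $r>0$, because the diameter scale $\delta$ is much smaller than the mean scale $\delta^{1-r/2}$ for small $\delta$. The condition $r<2$ is needed only for the entropy integral to converge, not for summing over shells as you state. This two-scale tail bound is the heart of the peeling step, not a matter of constants that can be deferred to the reference.
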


The next lemma provides $\psi_2$-Orlicz bounds on $\|u^\dagger-u(q^*)\|_n$ and $\| q^*\|_{W^{1,4}(\Omega)}$. See the appendix for the definition of the Orlicz norm $\|\cdot\|_{\psi_2}$.
\begin{lemma}\label{lem:apprx-u-bound-q}
Let Assumptions \ref{ass:quasi-points} and \ref{ass:con-reg} hold, and let $q^*$ be a minimizer to problem \eqref{eqn:conti-optim-prob-conduc}-\eqref{eqn:conti-weak-conduc}. Then with $\rho_0$ and $\gamma$ given by \eqref{eqn:rho0-gamma}, the following $\psi_2$-Orlicz bounds hold
\begin{equation*}
\Big\|\|u(q^*)-u^\dagger\|_n\Big\|_{\psi_2} \leq c\gamma^\frac12 \rho_0^{\frac\kappa2}\quad\mbox{and}\quad\Big\|\| q^*\|_{W^{1,4}(\Omega)}\Big\|_{\psi_2}\leq c\rho_0.
\end{equation*}
\end{lemma}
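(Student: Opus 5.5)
The plan is the standard basic-inequality argument combined with the entropy bound of Lemma \ref{Geer-entropy}. Since $q^*$ minimizes $J_\gamma$ and $q^\dag\in\mathcal{A}\cap W^{1,4}\II$, we have $J_\gamma(q^*)\le J_\gamma(q^\dag)$. Substituting $\bm m = u^\dag+\bm\xi$ and expanding the squares gives
\begin{equation*}
\|u(q^*)-u^\dag\|_n^2+\gamma\|q^*\|_{W^{1,4}\II}^\kappa\le 2(\bm\xi,u(q^*)-u^\dag)_n+\gamma\|q^\dag\|_{W^{1,4}\II}^\kappa .
\end{equation*}
Everything then rests on controlling the stochastic term $(\bm\xi,g)_n$ with $g=u(q^*)-u^\dag$. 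This $g$ depends on the data, so a supremum bound is needed.

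\textbf{Step 1: the function class.} Take $G=H^2\II\cap H_0^1\II$. By the classical entropy bound for Sobolev balls in $C(\overline\Omega)$, and hence in $\|\cdot\|_n$, we have $\log N(\varepsilon,B_G,\|\cdot\|_n)\le A\varepsilon^{-d/2}$, so $r=d/2<2$. Lemma \ref{Geer-entropy} then yields
\begin{equation*}
|(\bm\xi,g)_n|\le X\,\|g\|_n^{1-\frac d4}\|g\|_{H^2\II}^{\frac d4}, \qquad X=\mathcal O_p(\sigma n^{-\frac12}).
\end{equation*}
By \eqref{eqn:cont-u}, and since $\|q^*-q^\dag\|_{W^{1,4}}\le\|q^*\|+\|q^\dag\|$, we get
\begin{equation*}
\|g\|_{H^2\II}\le c(\|q^*\|_{W^{1,4}\II}+\|q^\dag\|_{W^{1,4}\II})^{2\zeta+1}.
\end{equation*}
Write $Q=\|q^*\|_{W^{1,4}\II}+\rho_0$, so that $\|g\|_{H^2\II}\le cQ^{2\zeta+1}$.

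\textbf{Step 2: Young's inequality and case splitting.} Insert the bound into the basic inequality and apply Young's inequality with exponents $\frac{2}{1-d/4}$ and $\frac{2}{1+d/4}$. This absorbs half of $\|g\|_n^2$ into the left-hand side and leaves
\begin{equation*}
\|g\|_n^2+\gamma\|q^*\|^\kappa\le \gamma\rho_0^\kappa+cX^{\frac{8}{4+d}}Q^{(2\zeta+1)\frac{2d}{4+d}}.
\end{equation*}
There are two cases.
\begin{itemize}
\item If $\|q^*\|_{W^{1,4}\II}\le\rho_0$, then $Q\le2\rho_0$ and the right side is at most $\gamma\rho_0^\kappa+cX^{8/(4+d)}\rho_0^{(2\zeta+1)2d/(4+d)}$.
\item Otherwise the $\gamma\|q^*\|^\kappa$ term must dominate. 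Because $\kappa=4\zeta>(2\zeta+1)\frac{2d}{4+d}$ for $d=2,3$, a second Young's inequality lets us absorb the $Q$-power into $\frac\gamma2\|q^*\|^\kappa$. This costs a term of the form $\gamma^{-a}X^{b}$.
\end{itemize}

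\textbf{Step 3: the parameter choice.} Under the coupling \eqref{eqn:rho0-gamma}, $\gamma^{\frac12+\frac d8}\sim\sigma n^{-1/2}\rho_0^s$ with $s=\frac{d-2\kappa}{4}$, i.e. $X$ is of order $\gamma^{\frac{4+d}{8}}\rho_0^{-s}$. With this choice every remainder term scales like $\gamma\rho_0^\kappa$ times a polynomial in $X/(\sigma n^{-1/2})$. Hence both $\|g\|_n^2\lesssim\gamma\rho_0^\kappa Z$ and $\|q^*\|^\kappa\lesssim\rho_0^\kappa Z$, where $Z$ is a power of the normalized sub-Gaussian variable $X/(\sigma n^{-1/2})$. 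Taking square roots and $\kappa$-th roots gives the tail bounds $\mathbb P(\|g\|_n\ge\gamma^{1/2}\rho_0^{\kappa/2}z)\le2e^{-cz^2}$ and $\mathbb P(\|q^*\|\ge\rho_0 z)\le 2e^{-cz^2}$. These are equivalent to the stated $\psi_2$ bounds.

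\textbf{Main obstacle.} The hard part is the exponent bookkeeping in Steps 2--3. Whenever powers of $X$ greater than one appear, the tail must remain sub-Gaussian in $z$ after the roots are taken, and this requires matching $\kappa$ and $s$ exactly. If a power mismatch appears, I would instead work directly with the event $\{X\le \sigma n^{-1/2}z\}$ and derive the deterministic bounds on that event. I would keep $z\ge1$ so that higher powers of $z$ can be dominated after taking the appropriate roots, adjusting the constant $c$ in the exponential.
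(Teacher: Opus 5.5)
Your route is the paper's own: the basic inequality, van de Geer's bound with $r=d/2$, the Lipschitz estimate, and a split on the size of $\|q^*\|_{W^{1,4}(\Omega)}$. Case 1 goes through. But the step you flagged as the main obstacle genuinely fails in Case 2, and your fallback does not repair it.

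Normalize $\hat a=\|g\|_n/(\gamma^{1/2}\rho_0^{\kappa/2})$, $\hat b=\|q^*\|_{W^{1,4}(\Omega)}/\rho_0$ and $\hat X=X/(\sigma n^{-1/2})$. The coupling \eqref{eqn:rho0-gamma} is exactly $\sigma n^{-1/2}(\gamma^{1/2}\rho_0^{\kappa/2})^{1-d/4}\rho_0^{(2\zeta+1)d/4}\asymp\gamma\rho_0^\kappa$. With your bound $\|g\|_{H^2(\Omega)}\le cQ^{2\zeta+1}\le c\|q^*\|_{W^{1,4}(\Omega)}^{2\zeta+1}$, the Case 2 inequality therefore reads
\begin{equation*}
\hat a^2+\hat b^\kappa\le 1+c\,\hat X\,\hat a^{1-\frac d4}\,\hat b^{\frac{(2\zeta+1)d}{4}}.
\end{equation*}
Your two Young steps (equivalently, the paper's ``solving the system'') give $\hat b\lesssim 1+|\hat X|^{4/(8\zeta-d)}$, which is fine. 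For $\hat a$, however, they give only $\hat a\lesssim 1+|\hat X|^{8\zeta/(8\zeta-d)}$, an exponent $8/7$ for $d=2$ and $32/29$ for $d=3$. Young is not the culprit. The choice $\hat a=\hat X^{8\zeta/(8\zeta-d)}$, $\hat b=\hat X^{4/(8\zeta-d)}$ makes all three terms the same size, so nothing better follows from this inequality. The resulting tail is $\mathbb P(\hat a>w)\le 2\exp(-cw^{2-d/(4\zeta)})$, which is not sub-Gaussian.

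Your fallback does not change this. Working on $\{|X|\le\sigma n^{-1/2}z\}$ gives $\hat a\le cz^p$ with $p>1$. Setting $w=z^p$ turns $e^{-cz^2}$ into $e^{-cw^{2/p}}$, so a higher power of $z$ makes the tail heavier, and no change of $c$ restores $e^{-cw^2}$. (The paper's case (ii) uses the same crude factor $\|q^*\|^{(2\zeta+1)d/4}$ and passes over this point without computing the exponents.)

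The fix is not to replace $\|q^\dag\|_{W^{1,4}(\Omega)}^\zeta$ by $\|q^*\|_{W^{1,4}(\Omega)}^\zeta$ in \eqref{eqn:cont-u}. Keep
\begin{equation*}
\|g\|_{H^2(\Omega)}\le c\rho_0^\zeta\|q^*\|_{W^{1,4}(\Omega)}^\zeta\big(\|q^*\|_{W^{1,4}(\Omega)}+\rho_0\big),
\end{equation*}
which becomes the factor $\hat b^\zeta(1+\hat b)$ after normalization. In Case 2 the power of $\hat b$ on the right then drops to $(\zeta+1)d/4$, and the same two Young steps yield
\begin{equation*}
\hat a^2+\hat b^\kappa\le c\Big(1+|\hat X|^{\frac{32\zeta}{16\zeta+2\zeta d-2d}}\Big).
\end{equation*}
Hence $\hat a\lesssim 1+|\hat X|^{16\zeta/(16\zeta+2\zeta d-2d)}$, with exponent $8/9$ for $d=2$ and $32/41$ for $d=3$; it is always below one because $\zeta>1$. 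Also $\hat b\lesssim 1+|\hat X|^{4/(8\zeta+\zeta d-d)}$. Every power of the sub-Gaussian variable $\hat X$ is now at most one, so both $\psi_2$ bounds follow, and Case 1 is unchanged.
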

\begin{proof}
By the minimizing property of $q^* \in \mathcal{A}$ to the functional $J_\gamma(q) $ over $\mathcal{A}$, it follows that
	\begin{equation*}
		\|u(q^*)-\bm m\|_{n}^2 + \gamma \|q^*\|_{W^{1,4}\II}^\kappa\leq \|u^\dagger-\bm m\|_{n}^2 + \gamma\| q^\dagger\|_{W^{1,4}\II}^\kappa.
	\end{equation*}
	Then direct computation leads to the elementary inequality
	\begin{align}\label{eqn:min-prop}
		\|u(q^*)-u^\dagger\|_n^2 + \gamma \|q^*\|_{W^{1,4}\II}^\kappa & \leq 2\big(\bm\xi,u(q^*) - u^\dagger\big)_n +  \gamma\|q^\dag\|_{W^{1,4}\II}^\kappa=:F_1 + F_2.
	\end{align}
Then one of the following two events $F_1$ and $F_2$ must happen
\begin{equation*}
    \|u(q^*)-u^\dagger\|_n^2 + \gamma \|q^*\|_{W^{1,4}\II}^\kappa \leq cF_i, \quad i=1,2,
\end{equation*}
and the events are also denoted by $F_i$.
Since $F_2$ is deterministic, when $F_2$ happens, there holds
\begin{equation*}
    \Big\|\|u(q^*)-u^\dagger\|_n\Big\|_{\psi_2} \leq c\gamma^{\frac12}\rho_0^\frac{\kappa}{2}\quad\mbox{and}\quad \Big\|\|q^*\|_{W^{1,4}\II}\Big\|_{\psi_2} \leq c\rho_0.
\end{equation*}
It remains to estimate the event $F_1$. First, by Lemma \ref{lem:reg-u}, we get 
\begin{equation*}
  \|u(q^*) - u^\dag\|_{H^2\II} \leq c \|q^\dag\|_{W^{1,4}\II}^\zeta \|q^*\|_{W^{1,4}\II}^\zeta \|q^\dag - q^*\|_{W^{1,4}\II}.
\end{equation*}
Then Lemmas \ref{lem:cover-en-bound-Sobolev} and \ref{Geer-entropy} imply 
\begin{equation}\label{con-F1-estimate}
    \begin{split}
    F_1 & \leq \mathcal{O}_p\Big(\sigma n^{-\frac12}\|u(q^*) - u^\dag\|_n^{1-\frac d4}\|u(q^*) - u^\dag\|_{H^2\II}^{\frac d4}\Big) \\
    & \leq \mathcal{O}_p\Big(\sigma n^{-\frac12}\|u(q^*) - u^\dag\|_n^{1-\frac d4}\big[\|q^\dag\|_{W^{1,4}\II}^\zeta \|q^*\|_{W^{1,4}\II}^\zeta \|q^\dag - q^*\|_{W^{1,4}\II}\big]^{\frac d4}\Big).    
    \end{split}
\end{equation}
Now consider the following two sub-cases separately: (i) $\|q^*\|_{W^{1,4}\II}\leq \|q^\dag\|_{W^{1,4}\II}$ and (ii) $\|q^*\|_{W^{1,4}\II}\geq \|q^\dag\|_{W^{1,4}\II}$. If the event (i) occurs, the estimate $\Big\|\|q^*\|_{W^{1,4}\II}\Big\|_{\psi_2} \leq \rho_0$ follows directly. Further, we obtain
\begin{equation}\label{con-event-i}
\|u(q^*)-u^\dagger\|_n^2 \leq \mathcal{O}_p\Big(\sigma n^{-\frac12}\Big)\|u(q^*) - u^\dag\|_n^{1-\frac d4}\rho_0^{\frac{(2\zeta+1)d}{4}}.  
\end{equation}
The choice $\kappa = 4\zeta$ and $\gamma^{\frac12 + \frac d8}= O\Big(\sigma n^{-\frac12}\rho_0^{\frac{-2\kappa+d}{4}}\Big)$ gives
\begin{equation}\label{con-event-i-state-approx}
\Big\|\|u(q^*)-u^\dagger\|_n\Big\|_{\psi_2} \leq c\big(\sigma n^{-\frac12}\big)^{\frac{4}{4+d}}\rho_0^{\frac{(2\zeta +1)d}{4+d}} \leq c\gamma^{\frac12}\rho_0^{\frac\kappa 2}.    
\end{equation}
If the event (ii) occurs, from \eqref{con-F1-estimate}, we have
\begin{equation}\label{dis-event-ii}
\|u(q^*)-u^\dagger\|_n^2 + \gamma \|q^*\|_{W^{1,4}\II}^\kappa \leq \mathcal{O}_p\Big(\sigma n^{-\frac12}\Big)\|u(q^*) - u^\dag\|_n^{1-\frac d4}\|q^*\|_{W^{1,4}\II}^{\frac{(2\zeta +1)d}{4}}.    
\end{equation}
This directly implies that the following two inequalities
\begin{equation}\label{con-event-ii-alternative-ineq}
\left\{\begin{aligned}
\|u(q^*) - u^\dag\|_n^{1+\frac d4} & \leq \mathcal{O}_p\Big(\sigma n^{-\frac12}\Big)\|q^*\|_{W^{1,4}\II}^{\frac{(2\zeta +1)d}{4}}, \\
     \gamma \|q^*\|_{W^{1,4}\II}^{\kappa -  \frac{(2\zeta +1)d}{4}} &\leq \mathcal{O}_p\Big(\sigma n^{-\frac12}\Big)\|u(q^*) - u^\dag\|_n^{1-\frac d4}.    
\end{aligned}\right.    
\end{equation}
Then by solving the system with the choice $\kappa = 4\zeta$ and $\gamma^{\frac12 + \frac d8}= O\Big(\sigma n^{-\frac12}\rho_0^{\frac{-2\kappa+d}{4}}\Big)$, we deduce
\begin{equation*}
\Big\|\|u(q^*)-u^\dagger\|_n\Big\|_{\psi_2} \leq c\gamma^{\frac12}\rho_0^{\frac\kappa 2}\quad\mbox{and}\quad\Big\|\|q^*\|_{W^{1,4}\II}\Big\|_{\psi_2} \leq c\rho_0.    
\end{equation*}
Combining the preceding estimates gives the desired estimates and completes the proof of the lemma.
\end{proof}

The following result connects the standard $L^2\II$ norm with the discrete semi-norm $\|\cdot\|_n$ for every $v\in H^2(\Omega)$ \cite[Theorems 3.3 and 3.4]{utreras1988convergence}. 
\begin{lemma}\label{lem:connect-seminorm-stdnorm}
	Under Assumption \ref{ass:quasi-points}, there exists $c=c(\Omega,B)$ such that the following estimates hold
	\begin{equation*}
		\|v\|_{L^2\II}\le c\big(\|v\|_n+ n^{-\frac{2}{d}}|v|_{H^2\II}\big)\quad \mbox{and}\quad \|v\|_n\le c\big(\|v\|_{L^2\II}+ n^{-\frac{2}{d}}|v|_{H^2\II}\big), \quad\forall v\in H^2\II.
	\end{equation*}
\end{lemma}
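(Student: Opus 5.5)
The plan is to prove both inequalities by a local argument on patches of diameter comparable to the fill distance, and then to sum over the patches. Set $h_n:=d_{\max}$. By Assumption \ref{ass:quasi-points}, the balls $B(x_i,d_{\min}/2)$ are disjoint, and the balls $B(x_i,d_{\max})$ cover $\Omega$. A volume count then gives $n\,d_{\min}^d\le c|\Omega|\le c\,n\,d_{\max}^d$. Since $d_{\max}\le b\,d_{\min}$, this yields $h_n\sim n^{-1/d}$, so $h_n^d\sim n^{-1}$ and $h_n^4\sim n^{-4/d}$. These are exactly the scalings appearing in the lemma.

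\textbf{Step 1 (patches).} Cover $\Omega$ by patches $D_j=\Omega\cap B(y_j,Mh_n)$, with $M$ a fixed large constant, such that the overlap number of the family $\{D_j\}$ is bounded independently of $n$. Since $\Omega$ is convex, each $D_j$ is convex, has diameter $\sim h_n$, and satisfies a uniform interior cone condition. Each $D_j$ should contain a bounded number of sample points, which follows from the $d_{\min}$ separation. It should also contain $d+1$ points forming a uniformly nondegenerate simplex, meaning one whose inradius is at least $c h_n$. This follows from the $d_{\max}$ covering property: pick a reference simplex of size $\sim Mh_n$ inside $D_j$, and take a sample point within $h_n$ of each vertex. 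For $M$ large, the perturbed simplex stays nondegenerate. As a consequence, for every linear polynomial $p$,
\[
\|p\|_{L^\infty(D_j)}\le c\max_{x_i\in D_j}|p(x_i)| \quad\mbox{and}\quad \|p\|_{L^\infty(D_j)}^2\le c\,h_n^{-d}\|p\|_{L^2(D_j)}^2,
\]
with $c$ independent of $j$ and $n$. Both facts follow by scaling to a reference configuration.

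\textbf{Step 2 (local Bramble--Hilbert).} On each patch, write $v=p_j+r_j$ with $p_j\in P_1$ chosen by averaged Taylor expansion. Since $H^2\hookrightarrow C(\overline{\Omega})$ for $d\le3$, scaling gives
\[
\|r_j\|_{L^\infty(D_j)}\le c\,h_n^{2-\frac d2}|v|_{H^2(D_j)}.
\]
For the first inequality, combine this with Step 1 and $|p_j(x_i)|\le|v(x_i)|+|r_j(x_i)|$. This gives
\[
\|v\|_{L^2(D_j)}^2\le c\,h_n^d\big(\|p_j\|_{L^\infty(D_j)}^2+\|r_j\|^2_{L^\infty(D_j)}\big)\le c\,h_n^d\sum_{x_i\in D_j}v(x_i)^2+c\,h_n^4|v|_{H^2(D_j)}^2.
\]
For the second inequality, use $v(x_i)^2\le 2p_j(x_i)^2+2r_j(x_i)^2$ together with the bounded number of points per patch and the inverse estimate from Step 1. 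This gives
\[
h_n^{d}\sum_{x_i\in D_j}v(x_i)^2\le c\big(\|p_j\|_{L^2(D_j)}^2+h_n^4|v|^2_{H^2(D_j)}\big)\le c\big(\|v\|^2_{L^2(D_j)}+h_n^4|v|^2_{H^2(D_j)}\big),
\]
where the last step uses $\|p_j\|_{L^2(D_j)}\le\|v\|_{L^2(D_j)}+\|r_j\|_{L^2(D_j)}$.

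\textbf{Step 3 (summation).} Sum the local bounds over $j$, using the bounded overlap, $h_n^d\sim n^{-1}$ (so that $h_n^d\sum_i v(x_i)^2\sim\|v\|_n^2$) and $h_n^2\sim n^{-2/d}$. Taking square roots then yields both estimates, with a constant depending only on $\Omega$ and $b$.

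The main obstacle I expect is Step 1, namely the uniform unisolvency, or norming-set property, of the sample points in every patch, including patches near the corners and edges of the polyhedron. This is where convexity (the uniform cone condition) and a sufficiently large, but fixed, patch-scale factor $M$ are used. It is also precisely the content of the cited results of Utreras; everything else is routine scaling.
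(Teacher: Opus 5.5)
Your proof is correct, and it differs from the paper's only because the paper gives no argument for this lemma: it imports both inequalities directly from Theorems 3.3 and 3.4 of Utreras.

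You give a self-contained local argument built from five ingredients:
\begin{itemize}
\item patches $D_j=\Omega\cap B(y_j,Mh_n)$ with uniformly bounded chunkiness, which comes from convexity of $\Omega$;
\item a uniformly nondegenerate $P_1$-unisolvent set in each patch, which comes from the covering property of $d_{\max}$ together with $M$ large;
\item a bounded number of sample points per patch, which comes from the separation $d_{\min}$ and $d_{\max}\le b\,d_{\min}$;
\item the scaled Bramble--Hilbert bound $\|r_j\|_{L^\infty(D_j)}\le c\,h_n^{2-d/2}|v|_{H^2(D_j)}$, valid because $d\le 3$;
\item summation over patches with bounded overlap, using $h_n^d\sim n^{-1}$.
\end{itemize}

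The citation buys brevity. Your route buys transparency: it shows exactly where convexity and quasi-uniformity enter, and that the constant depends only on $\Omega$ and $b$. It also shows that the second inequality needs no unisolvency at all, only point counting, the polynomial inverse estimate and Bramble--Hilbert. The first inequality genuinely needs the local norming-set property.

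One point you should state explicitly. The norming-set step requires $D_j$ to contain a ball of radius much larger than $h_n$, so $Mh_n$ must lie below the inradius scale of $\Omega$. Hence your argument holds only for $n\ge n_0(\Omega,b)$. This is consistent with Assumption~\ref{ass:quasi-points} being imposed only for large $n$, and the restriction cannot be removed. For $n\le d$ there is a nonzero affine function vanishing at every $x_i$. It has $\|v\|_n=0$ and $|v|_{H^2}=0$ but $\|v\|_{L^2}>0$, so it violates the first inequality for any constant.
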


Now we can give the proof of Theorem \ref{thm:con-err-conduc}.

\begin{proof}
By Lemma \ref{lem:apprx-u-bound-q} and the estimate \eqref{ineq:proba-distrib-estimate}, the following two estimates hold simultaneously 
\begin{equation*}
\|u(q^*)-u^\dagger\|_n \leq c\gamma^{\frac12}\rho_0^{\frac\kappa2}\ell_\tau \quad \mbox{and}\quad \| q^*\|_{W^{1,4}\II} \leq c\rho_0\ell_\tau,
\end{equation*}
with probability at least $1-2\tau$.
By the weak formulations of $u^\dag\equiv u(q^\dag)$ and $u(q^*)$, we have 
\begin{equation*}
    ((q^\dag - q^*)\nabla u^\dag,\nabla\varphi) = (q^*\nabla (u(q^*)-u^\dag),\nabla\varphi),\quad \forall \varphi\in H_0^1\II.
\end{equation*}
Let $\varphi = \frac{q^\dag - q^*}{q^\dag} u^\dag$. Then the box constraint of $\mathcal{A}$, H\"{o}lder's inequality and Lemma \ref{lem:Sob-embed} lead to
\begin{equation}\label{estimate-grad-varphi}
    \begin{split}
     \|\nabla\varphi\|_{L^2\II} & \leq \bigg\| \frac{\nabla (q^\dag - q^*)}{q^\dag} u^\dag - \frac{ (q^\dag - q^*)\nabla q^\dag}{(q^\dag)^2} u^\dag + \frac{q^\dag - q^*}{q^\dag}\nabla u^\dag\bigg\|_{L^2\II} \\
    & \leq c\big[\big( \|\nabla q^\dag\|_{L^4\II} +  \|\nabla q^*\|_{L^4\II}\big)\|u^\dag\|_{L^4\II} + \|\nabla u^\dag\|_{L^2\II} \big] \\& \leq c\big( \|q^\dag\|_{W^{1,4}} +  \|q^*\|_{W^{1,4}\II}\big)\|u^\dag\|_{H^1\II}  \leq c\rho_0\ell_\tau.   
    \end{split}
\end{equation}
Thus Poincar\'{e} inequality directly gives $\varphi\in H_0^1\II$. From Lemma \ref{lem:reg-u}, we get
\begin{equation*}
\|u(q^*) - u^\dag\|_{H^2\II} \leq \|q^*-q^\dag\|_{W^{1,4}\II}\| q^*\|_{W^{1,4}\II}^\zeta\|q^\dag\|_{W^{1,4}\II}^\zeta \leq c( \rho_0\ell_\tau)^{2\zeta+1}.   
\end{equation*}
By the Gagliardo–Nirenberg interpolation inequality \cite{Brezis2018}, Lemma \ref{lem:connect-seminorm-stdnorm} and the elementary inequality $\frac\kappa2\leq 2\zeta+1$, we obtain
\begin{align*}
& \|\nabla (u(q^*) - u^\dag)\|_{L^2\II}  \leq c\|u(q^*) - u^\dag\|_{L^2\II}^{\frac12}\|u(q^*) - u^\dag\|_{H^2\II}^{\frac12} \\
 \leq & c\big[\|u(q^*) - u^\dag\|_n + n^{-\frac2d}\|u(q^*) - u^\dag\|_{H^2\II}\big]^{\frac12}\|u(q^*) - u^\dag\|_{H^2\II}^{\frac12} \\
\leq & c\big[\gamma^{\frac12}\rho_0^{\frac\kappa2}\ell_\tau + n^{-\frac2d}(\rho_0\ell_\tau)^{2\zeta+1}\big]^{\frac12}(\rho_0\ell_\tau)^{\frac{2\zeta+1}{2}} \leq c\big(\gamma^{\frac12} + n^{-\frac2d}\big)^{\frac12}(\rho_0\ell_\tau)^{2\zeta+1}.
\end{align*}
Then H\"{o}lder's inequality and the box constraint of the set $\mathcal{A}$ give
\begin{align*}
    |(q^*\nabla (u(q^*)-u^\dag),\nabla\varphi)| \leq c\|\nabla (u(q^*) - u^\dag)\|_{L^2\II} \|\nabla\varphi\|_{L^2\II} \leq c\big(\gamma^{\frac12} + n^{-\frac2d}\big)^{\frac12}(\rho_0\ell_\tau)^{2\zeta+2}.
\end{align*}
Meanwhile, the argument in \cite{Bonito:2017} yields the identity
\begin{equation*}
((q^\dag - q^*)\nabla u^\dag,\nabla\varphi) = \frac12\int_{\Omega}\Big(\frac{q^\dag - q^*}{q^\dag}\Big)^2(fu^\dag + q^\dag|\nabla u^\dag|^2)\ {\rm d}x.    
\end{equation*}
Thus the first assertion follows. 
Next we divide the domain $\Omega$ into two disjoint sets $\Omega_r=\{x\in\Omega:{\rm dist}(x,\partial\Omega)\geq r\}$ and $\Omega_r^c=\Omega\setminus\Omega_r$, with the constant $r>0$ to be chosen. On the subdomain $\Omega_r$, the first assertion yields
\begin{align*}
    \|q^\dag - q^*\|_{L^2(\Omega_r)}^2 \leq cr^{-\beta}\int_{\Omega_r}\Big(\frac{q^\dag - q^*}{q^\dag}\Big)^2(fu^\dag + q^\dag|\nabla u^\dag|^2)\ {\rm d}x  \leq cr^{-\beta}\big(\gamma^{\frac12} + n^{-\frac2d}\big)^{\frac12}(\rho_0\ell_\tau)^{2\zeta+2}.
\end{align*}
By the box constraint of $\mathcal{A}$, we deduce 
\begin{equation*}
\|q^\dag - q^*\|_{L^2(\Omega_r^c)}^2 \leq c|\Omega_r^c| \leq cr.    
\end{equation*}
Then balancing $r$ with $ r^{-\beta}\big(\gamma^{\frac12} + n^{-\frac2d}\big)^{\frac12}(\rho_0\ell_\tau)^{2\zeta+2}$ yields the desired estimate. 
\end{proof}

\section{Proof of Theorem \ref{thm:err-conduc}}
\label{sec:conv-disc}
The proof of Theorem \ref{thm:err-conduc} requires several estimates in the FEM analysis.
Let $\Pi_h$ be the Lagrange nodal interpolation operator on the space $V_h$. Then the following estimates hold for $s\in[1,2]$ and $p\in[1,\infty]$ with $sp>d$ and $k\leq s$ \cite[Corollary 4.4.24]{BrennerScott:book2008}:
\begin{align}\label{ineq:Pi_h-approx}
\|v-\Pi_hv\|_{L^\infty(K)} &\leq ch^{s-\frac dp} \|v\|_{W^{s,p}(K)},\quad \forall v\in W^{s,p}(K),  \\
\|v-\Pi_hv\|_{W^{k,p}\II} &\leq ch^{s-k} \|v\|_{W^{s,p}\II}, \quad \forall v\in W^{s,p}\II.
\end{align}
Moreover, we define the $L^2\II$ projection operator $P_h:L^2\II\to V_h$ by
\begin{equation*}
	(P_hv,v_h)=(v,v_h), \quad \forall v\in L^2\II,\ v_h\in V_h.
\end{equation*}
Then for $1\leq p\leq \infty$ and $0\leq k\leq s\leq 2$ \cite[pp. 85--86]{Thome2006GalerkinFE}:
\begin{equation}\label{ineq:P_h-approx}
	\big\|v-P_hv\big\|_{W^{k,p}\II}\leq  ch^{s-k}\big\|v\big\|_{W^{s,p}\II}, \quad \forall v\in W^{s,p}\II.
\end{equation}
The estimate \eqref{ineq:P_h-approx} of $P_h$, the inverse estimate in $V_h$ \cite[Chapter 4]{BrennerScott:book2008} and \eqref{ineq:Pi_h-approx} of $\Pi_h$ imply that for any $v\in W^{1,4}\II$,
\begin{align}
&\|\Pi_h v\|_{W^{1,4}\II} \leq \|P_h v\|_{W^{1,4}\II}  + \|P_hv - \Pi_h v\|_{W^{1,4}\II} \nonumber\\
\leq& c\| v\|_{W^{1,4}\II} + ch^{-1}\|P_hv - \Pi_h v\|_{L^4\II} \leq c\|v\|_{W^{1,4}\II}. \label{W14-stab-Pih}       
\end{align}
Additionally, we employ an $H^2(\Omega)$ conforming FEM space $W_h$ (e.g., Hermite elements and Argyris elements for $d=1,2$ \cite{BrennerScott:book2008} and Zhang elements for $d=3$ \cite{zhang2009family}). We define an interpolation operator $\widetilde{\Pi}_h: H^2\II\mapsto W_h \subset H^2(\Omega)$ such that for any $0\leq k\leq s\leq 2$, there holds
\begin{equation}\label{eqn:w-Pi-h}
\|v-\widetilde{\Pi}_h v\|_{H^k\II} \leq ch^{s-k}\|v\|_{H^s\II},\quad \forall v\in H^s(\Omega).    
\end{equation}

Next we give several technical estimates in the $\|\cdot\|_n$ norm. Note that when $n$ is large, the semi-norm $\|\cdot\|_n$ is actually a norm on the FEM space $V_h$ and is equivalent to the standard $\|\cdot\|_{L^2\II}$ norm \cite[Lemma 5.3]{jin2024stochastic}.
\begin{lemma}\label{lem:dis-equiv-n-L2}
Let Assumption \ref{ass:quasi-points} hold. Then there exist $c,c'>0$, independent of $n$ and $h$, such that
	\begin{equation*}
		c'\|v_h\|_{L^2\II}\leq\|v_h\|_{n}\leq c\|v_h\|_{L^2\II},\quad\forall v_h\in V_h.
	\end{equation*}
\end{lemma}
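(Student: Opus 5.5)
The plan is to prove both inequalities by a local, element-by-element comparison. The estimate is stated for $v_h\in V_h$ on a quasi-uniform mesh. The points $(x_i)_{i=1}^n$ are quasi-uniform with spacing $d_{\max}\sim d_{\min}$, and by a volume-counting argument we get $d_{\min}\sim n^{-1/d}$. Implicit in the statement (as in \cite[Lemma 5.3]{jin2024stochastic}) is the regime where $n$ is large relative to the mesh, i.e.\ $d_{\max}\le c_* h$ for a suitably small $c_*$. I would make this explicit at the start.

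\textbf{Upper bound.} For each $i$, let $K(x_i)$ be an element containing $x_i$. The inverse estimate in $P_1(K)$ gives $|v_h(x_i)|^2\le \|v_h\|_{L^\infty(K(x_i))}^2\le c h^{-d}\|v_h\|_{L^2(K(x_i))}^2$. Because the points are $d_{\min}$-separated, a ball packing bound shows that each element $K$ contains at most $c(h/d_{\min})^d+c\le c h^d n$ points, using $d_{\min}^{-d}\le cn$ and $h^dn\gtrsim 1$ in the regime above. Summing over $i$ and dividing by $n$ then gives
\begin{equation*}
\|v_h\|_n^2\le n^{-1}\sum_K \#\{x_i\in K\}\, c h^{-d}\|v_h\|_{L^2(K)}^2\le c\|v_h\|_{L^2\II}^2 .
\end{equation*}

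\textbf{Lower bound.} Since $v_h$ is continuous and piecewise linear, I would apply the first estimate of Lemma \ref{lem:connect-seminorm-stdnorm}, but to a smooth surrogate, because $v_h\notin H^2\II$. A cleaner route is direct. On each $K$, take the shrunk simplex $\widehat K\subset K$ (homothetic about the barycenter, with ratio $1/2$). Since $d_{\max}\le c_*h$ with $c_*$ small, $\widehat K$ contains a ball of radius $\ge c h$, so it contains at least one sample point; by the covering argument it in fact contains at least $c h^d n$ points spread quasi-uniformly across it. For $p\in P_1(K)$, I then need
\begin{equation*}
\|p\|_{L^2(K)}^2\le c\,h^d\,\frac{1}{N_K}\sum_{x_i\in \widehat K}|p(x_i)|^2,\qquad N_K=\#\{x_i\in\widehat K\}.
\end{equation*}
I would obtain this by scaling to a reference simplex. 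There, the set of points is $\epsilon$-dense in $\widehat K_{\rm ref}$ with $\epsilon=c_*$ small. Hence it contains $d+1$ points forming a uniformly nondegenerate simplex (close to the vertices of $\widehat K_{\rm ref}$), which makes the discrete functional a norm on $P_1$ with a uniform constant. Summing over $K$ and using $N_K\le c h^d n$ gives $\|v_h\|_{L^2\II}^2\le c\sum_K n^{-1}\sum_{x_i\in K}|v_h(x_i)|^2 = c\|v_h\|_n^2$.

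\textbf{Main obstacle.} The hard step is the uniform unisolvence in the lower bound: I have to show that near-density of the sample points forces a uniformly well-conditioned $(d+1)$-point subset in each element. I would argue this by picking, for each vertex $a_j$ of the reference shrunk simplex, a sample point within distance $\epsilon$ of it. The resulting Vandermonde-type matrix of barycentric coordinates is then an $O(\epsilon)$ perturbation of the identity and hence has a uniformly bounded inverse. Everything else is routine counting and scaling, with constants independent of $n$ and $h$.
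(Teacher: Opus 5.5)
Your upper bound is correct. You are also right to make the regime $d_{\max}\le c_*h$ (so $nh^d\gtrsim 1$) explicit. The paper gives no argument of its own, only the remark ``when $n$ is large'' and a pointer to \cite[Lemma 5.3]{jin2024stochastic}. Without such a condition both inequalities fail: the lower one as soon as $\dim V_h>n$, the upper one for a hat function when $nh^d\to 0$.

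\textbf{The gap is in the lower bound.} A single uniformly nondegenerate tuple $y_0,\dots,y_d\in\widehat K$ only yields $\|p\|_{L^\infty(K)}^2\le C\sum_{j=0}^d|p(y_j)|^2$, hence
\[
\|p\|_{L^2(K)}^2\le c\,h^d\sum_{x_i\in\widehat K}|p(x_i)|^2=c\,h^dN_K\cdot\frac{1}{N_K}\sum_{x_i\in\widehat K}|p(x_i)|^2 .
\]
So the constant in your displayed local inequality carries a factor $N_K\sim h^dn$. Unisolvence shows that the averaged functional is a norm on $P_1$. It does not show that this norm is uniformly equivalent to $h^{-d}\|\cdot\|_{L^2(K)}^2$. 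Summing over $K$ then gives only $\|v_h\|_{L^2(\Omega)}^2\le c\,nh^d\|v_h\|_n^2$. This blows up precisely in the regime $nh^d\to\infty$ that you are working in. In your final summation you also invoke $N_K\le c\,h^dn$. Turning $h^d/N_K$ into $c/n$ requires the reverse bound $N_K\ge c\,h^dn$.

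\textbf{How to close it.} You need to use quasi-uniformity at all scales between $d_{\max}$ and $h$, so that you obtain many disjoint good tuples rather than one.

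Let $\mu_k$ be the barycentric coordinates of $\widehat K$ and $a_j$ its vertices. Fix $r_0>0$, depending only on shape regularity, such that:
\begin{itemize}
\item the balls $U_j:=B(a_j,r_0h)$ are pairwise disjoint and lie in the interior of $K$;
\item every choice $y_j\in U_j$ gives a matrix $\big(\mu_k(y_j)\big)_{j,k}$ whose inverse is bounded by $2$.
\end{itemize}

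Every closed ball of radius $d_{\max}$ centred in $\Omega$ contains a sample point. Placing $\gtrsim (r_0h/d_{\max})^d$ disjoint such balls inside $U_j$ (possible once $c_*\le r_0/2$) shows that each $U_j$ contains at least $m\ge c\,h^dn$ points. Here you use $d_{\max}\le b\,d_{\min}\le c\,n^{-1/d}$.

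Now choose $m$ disjoint tuples with one point in each $U_j$, and sum your single-tuple bound over them. This gives $m\|p\|_{L^\infty(K)}^2\le C\sum_{x_i\in K}|p(x_i)|^2$, and therefore
\[
\|p\|_{L^2(K)}^2\le c\,h^d m^{-1}\sum_{x_i\in K}|p(x_i)|^2\le c\,n^{-1}\sum_{x_i\in K}|p(x_i)|^2 .
\]
Because the $U_j$ lie in the interior of $K$, no point is counted twice. Summing over elements yields $\|v_h\|_{L^2(\Omega)}^2\le c\|v_h\|_n^2$ with $c$ independent of $n$ and $h$.
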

The next result gives error bounds on the FEM approximations $u_h(q)$ and $u_h(\Pi_hq^\dag)$ in the $\|\cdot\|_n$ norm.
\begin{lemma}\label{lem:point-err}
Let Assumption \ref{ass:quasi-points} hold. Then for any $q\in W^{1,4}\II\cap\mathcal{A}$ and $f\in L^2(\Omega)$, there holds
\begin{equation*}
\|u(q) - u_h(q)\|_n  \leq ch^2\|q\|^{2\zeta}_{W^{1,4}\II}.  
\end{equation*}	
If Assumption \ref{ass:dis-reg} holds, then there exists $c>0$, depending on $\|q^\dag\|_{H^2(\Omega)}$ and $\|q^\dag\|_{W^{1,\infty}\II}$, such that
\begin{align*}
 \|u^\dag-u_h(\Pi_hq^{\dag})\|_{n} \leq ch^2.
\end{align*}
\end{lemma}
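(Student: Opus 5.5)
The plan is to work entirely with the pointwise values $e(x_i)$ of the FEM error and pass to $L^2\II$-type quantities by combining Lemma \ref{lem:connect-seminorm-stdnorm} with an $L^\infty$ bound. A discrete mean is dominated by the sup norm, so $\|v\|_n\le \|v\|_{L^\infty\II}$ for every $v\in C(\overline\Omega)$. This means a pointwise estimate of order $h^2$ suffices, provided we avoid the logarithmic factor of the classical $L^\infty$ estimate for P1 elements. To do so, we split the error through the $H^2$-conforming interpolant and the $L^2$ projection.

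\textbf{First estimate.} Let $u=u(q)$, $u_h=u_h(q)$ and $e=u-u_h$. Write $e=(u-P_hu)+(P_hu-u_h)$.
\begin{itemize}
\item For the first term, Lemma \ref{lem:connect-seminorm-stdnorm} cannot be applied directly, since $P_hu\notin H^2\II$. Instead, use \eqref{ineq:Pi_h-approx} and \eqref{ineq:P_h-approx} in $L^\infty$ with $s=2,\ p=2$; since $2\cdot2>d$ for $d\le3$, this gives
\[
\|u-P_hu\|_{L^\infty\II}\le\|u-\Pi_hu\|_{L^\infty\II}+\|\Pi_h u-P_hu\|_{L^\infty\II}.
\]
The last term is bounded by $ch^{-d/2}\|\Pi_hu-P_hu\|_{L^2\II}\le ch^{2-d/2}\|u\|_{H^2\II}$, which is not $O(h^2)$ in general.
\item So I would switch strategy: use Lemma \ref{lem:dis-equiv-n-L2} on the discrete part and Lemma \ref{lem:connect-seminorm-stdnorm} on a smooth part. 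Concretely, write
\[
e=(u-\widetilde\Pi_h u)+(\widetilde\Pi_h u-\Pi_h\widetilde\Pi_hu)+(\Pi_h\widetilde\Pi_h u-u_h).
\]
\item The first term is in $H^2\II$. Lemma \ref{lem:connect-seminorm-stdnorm} with \eqref{eqn:w-Pi-h} gives $\|\cdot\|_n\le c(h^2+n^{-2/d})\|u\|_{H^2\II}$. I expect the intended version to absorb $n^{-2/d}$, or more simply to use $\|\cdot\|_n\le\|\cdot\|_{L^\infty\II}\le c h^{2-d/2}$. Hence a cleaner choice is to take the first term as $u-\Pi_hu$ directly and estimate it at the nodes.
\end{itemize}

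\textbf{Main obstacle.} The main obstacle is that $\|\cdot\|_n$ sees pointwise values, so one needs either a superconvergent $L^\infty$ estimate or a duality argument adapted to point evaluations. What I would try first is the Aubin--Nitsche duality in $L^2$, namely
\[
\|e\|_{L^2\II}\le ch^2\|q\|_{W^{1,4}\II}^{\zeta}\|u\|_{H^2\II}\le ch^2\|q\|_{W^{1,4}\II}^{2\zeta}\|f\|_{L^2\II}.
\]
Here the dual problem with coefficient $q$ has $H^2$ regularity constant $\|q\|^\zeta_{W^{1,4}\II}$ by Lemma \ref{lem:reg-u}, and Céa's lemma is applied in $H^1$. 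This combines with the split $e=(u-\Pi_hu)+(\Pi_hu-u_h)$:
\begin{itemize}
\item The nodal interpolation error has $\|u-\Pi_hu\|_n\le ch^{2-d/2}\|u\|_{H^2\II}$ from \eqref{ineq:Pi_h-approx}. More sharply, applying Lemma \ref{lem:connect-seminorm-stdnorm} elementwise gives $ch^2\|u\|_{H^2}$, using that the quasi-uniform points give each element only $O(nh^d)$ samples.
\item The discrete part satisfies, by Lemma \ref{lem:dis-equiv-n-L2}, $\|\Pi_hu-u_h\|_n\le c\|\Pi_hu-u_h\|_{L^2\II}\le c(\|u-\Pi_hu\|_{L^2\II}+\|e\|_{L^2\II})\le ch^2\|q\|_{W^{1,4}\II}^{2\zeta}$.
\end{itemize}
This yields the first estimate.

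\textbf{Second estimate.} Split $u^\dag-u_h(\Pi_hq^\dag)=(u^\dag-u_h(q^\dag))+(u_h(q^\dag)-u_h(\Pi_hq^\dag))$. The first piece is covered by the first estimate. For the second, $w_h=u_h(q^\dag)-u_h(\Pi_hq^\dag)\in X_h$ satisfies
\[
(\Pi_hq^\dag\nabla w_h,\nabla\varphi_h)=((\Pi_hq^\dag-q^\dag)\nabla u_h(q^\dag),\nabla\varphi_h),
\]
and by Lemma \ref{lem:dis-equiv-n-L2} it suffices to bound $\|w_h\|_{L^2\II}$. Use a duality argument with the dual solution $z$ for coefficient $q^\dag$ and data $w_h$, integrating by parts on the right-hand side:
\[
((\Pi_hq^\dag-q^\dag)\nabla u_h,\nabla z)=-((\Pi_hq^\dag-q^\dag),\nabla\cdot(\nabla u^\dag\,z))+\text{FEM error terms}.
\]
Since $q^\dag\in H^2\cap W^{1,\infty}$, we have $\|\Pi_hq^\dag-q^\dag\|_{L^2}\le ch^2$ and $\|\Pi_hq^\dag-q^\dag\|_{L^\infty}\le ch$. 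Also $u^\dag\in W^{1,\infty}\cap H^2$, and the $H^1$ error of $u_h$ is $O(h)$. These give $O(h^2)$, with constants depending on $\|q^\dag\|_{H^2\II}$ and $\|q^\dag\|_{W^{1,\infty}\II}$.
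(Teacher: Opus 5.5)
Your proposal is correct. After the two abandoned strategies at the start (the $P_h$ split and the $\widetilde\Pi_h$ split) are removed, your first estimate is the paper's argument. Aubin--Nitsche with the dual $H^2$ bound from Lemma \ref{lem:reg-u} gives $\|e\|_{L^2(\Omega)}\le ch^2\|q\|^{2\zeta}_{W^{1,4}(\Omega)}$. The piece $\Pi_hu-u_h$ is handled by Lemma \ref{lem:dis-equiv-n-L2}. The nodal interpolation error satisfies $\|u-\Pi_hu\|_n\le ch^2\|u\|_{H^2(\Omega)}$; the paper imports this from \cite[Lemma 5.4]{jin2024stochastic}.

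For that last bound, the clean justification is not Lemma \ref{lem:connect-seminorm-stdnorm} applied elementwise, since its scaled per-simplex form is not given. Use instead the local estimate \eqref{ineq:Pi_h-approx} with $s=p=2$: $|(u-\Pi_hu)(x_i)|\le ch^{2-d/2}\|u\|_{H^2(K)}$ for $x_i\in\overline K$. Combine it with the fact that each element contains at most $c(1+nh^d)$ sampling points. That count is $O(nh^d)$ only when $h\gtrsim n^{-1/d}$, a condition the paper also uses tacitly through Lemma \ref{lem:dis-equiv-n-L2}.

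For the second estimate you take a genuinely different route.
\begin{itemize}
\item The paper cites \cite[Lemma A.1]{JinZhou:2021SINUM} for $\|u^\dag-u_h(\Pi_hq^\dag)\|_{L^2(\Omega)}\le ch^2$, then splits through $\Pi_hu^\dag$.
\item You split through $u_h(q^\dag)$ instead. You reuse the first estimate for $u^\dag-u_h(q^\dag)$ and prove the coefficient-perturbation bound $\|u_h(q^\dag)-u_h(\Pi_hq^\dag)\|_{L^2(\Omega)}\le ch^2$ yourself by duality.
\end{itemize}
Your version is self-contained. It also shows exactly where $H^2(\Omega)$ enters, through $\|\Pi_hq^\dag-q^\dag\|_{L^2(\Omega)}\le ch^2$, and where $W^{1,\infty}(\Omega)$ enters, through $\|\Pi_hq^\dag-q^\dag\|_{L^\infty(\Omega)}\le ch$. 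The paper's version is shorter only because it relies on the citation.

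Two details need fixing.
\begin{itemize}
\item Your displayed ``integration by parts'' identity is false. With $\delta=\Pi_hq^\dag-q^\dag$, its right side equals $-(\delta,z\Delta u^\dag)-(\delta,\nabla u^\dag\cdot\nabla z)$, not $(\delta\nabla u^\dag,\nabla z)$. No integration by parts is needed, since $\delta$ already appears undifferentiated: $|(\delta\nabla u^\dag,\nabla z)|\le\|\delta\|_{L^2(\Omega)}\|\nabla u^\dag\|_{L^\infty(\Omega)}\|\nabla z\|_{L^2(\Omega)}$.
\item Your dual problem uses the coefficient $q^\dag$, while $w_h$ solves a problem with $\Pi_hq^\dag$. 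The terms $(q^\dag\nabla(z-\Pi_hz),\nabla w_h)$ and $((q^\dag-\Pi_hq^\dag)\nabla w_h,\nabla\Pi_hz)$ therefore need the energy bound $\|\nabla w_h\|_{L^2(\Omega)}\le ch$, which you get by testing the $w_h$-equation with $w_h$. Alternatively, take $\Pi_hq^\dag$ as the dual coefficient; its $W^{1,4}(\Omega)$ norm is uniformly bounded by \eqref{W14-stab-Pih}, so Lemma \ref{lem:reg-u} still gives the $H^2$ bound on $z$.
\end{itemize}
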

\begin{proof}
By the argument of \cite[Lemma 5.4]{jin2024stochastic} and Lemma \ref{lem:reg-u}, we deduce
\begin{equation*}
    \|u(q) - \Pi_hu(q)\|_n \leq ch^2\|u(q)\|_{H^2(\Omega)} \leq  ch^2\|q\|^{\zeta}_{W^{1,4}\II}.
\end{equation*}
Let $e:=u(q) - u_h(q)$. Then C\'{e}a's lemma \cite[Chapter 5]{BrennerScott:book2008} and Lemma \ref{lem:reg-u} yield 
\begin{equation*}
\|e\|_{H^1\II}\leq ch\|u(q)\|_{H^2\II} \leq ch\|q\|^{\zeta}_{W^{1,4}\II}. 
\end{equation*}
To bound $e$ in the $L^2\II$ norm, we employ Nitsche's trick \cite[Chapter 5]{BrennerScott:book2008}. Let $z\equiv z(q)\in H^2\II\cap H_0^1\II$ and $z_h\equiv z(q)\in X_h$ solve respectively
\begin{equation*}
\begin{split}
&(q\nabla z,\nabla \varphi) = (e,\varphi), \quad \forall\varphi\in H_0^1(\Omega), \\
&(q\nabla z_h,\nabla \varphi_h) = (e,\varphi_h), \quad \forall\varphi_h\in X_h.
\end{split}
\end{equation*}
Let $\varphi= u(q)$ and $\varphi_h = u_h(q)$. Thus Galerkin orthogonality and Lemma \ref{lem:reg-u} imply
\begin{align*}
    \|e\|_{L^2\II}^2 \leq ch^2\|z(q)\|_{H^2\II}\|u(q)\|_{H^2\II} \leq ch^2 \|q\|^{2\zeta}_{W^{1,4}\II}\|e\|_{L^2\II}. 
\end{align*}
Hence $\|e\|_{L^2\II}\leq ch^2 \|q\|^{2\zeta}_{W^{1,4}\II}$. By Lemma \ref{lem:dis-equiv-n-L2} and the triangle inequality, we obtain
\begin{align*}
    \|\Pi_hu(q) - u_h(q)\|_n & \leq c\|\Pi_hu(q) - u_h(q)\|_{L^2\II}
    \leq c\big(\|\Pi_hu(q) - u(q)\|_{L^2\II}+ \|e\|_{L^2\II}\big)   \\
    & \leq ch^2\|q\|^{\zeta}_{W^{1,4}\II} + ch^2\|q\|^{2\zeta}_{W^{1,4}\II} \leq ch^2\|q\|^{2\zeta}_{W^{1,4}\II}.
\end{align*}
Now the first estimate follows from the triangle inequality. Next, under Assumption \ref{ass:dis-reg}, we have \cite[Lemma A.1]{JinZhou:2021SINUM}
\begin{equation*}
\|u^\dag-u_h(\Pi_hq^{\dag})\|_{L^2\II} \leq ch^2.    
\end{equation*}
Finally, the splitting $u^\dag-u_h(\Pi_hq^{\dag}) = (u^\dag - \Pi_hu^\dag) + (\Pi_hu^\dag-u_h(\Pi_hq^{\dag}))$ and Lemma \ref{lem:dis-equiv-n-L2}
give the second assertion.
\end{proof}

The next lemma gives a discrete analogue of Lemma \ref{lem:apprx-u-bound-q}. 
\begin{lemma}\label{lem:approx-sol-uppbound-qh}
Let Assumptions \ref{ass:quasi-points} and \ref{ass:dis-reg} hold, and let $q_h^*\in\mathcal{A}_h$ be a minimizer to problem \eqref{eqn:dis-optim-dis}-\eqref{eqn:dis-weak-conduc}. 
Then with $\rho_0$ and $\gamma$ given by \eqref{eqn:rho0-gamma}, the following $\psi_2$-Orlicz bounds hold
\begin{align*}
\Big\|\|u_h(q_h^*)-u_h(\Pi_hq^\dagger)\|_n \Big\|_{\psi_2}\leq c\eta \quad\mbox{and}\quad \Big\|\|q_h^*\|_{W^{1,4}\II}\Big\|_{\psi_2}  \leq c \omega,
\end{align*}
with $\eta$ and $\omega$ defined in \eqref{eqn:eta} and \eqref{eqn:omega}, respectively.
Moreover, with the choice $h=O(\gamma^{\frac14})$, there hold
\begin{align*}
\Big\|\|u_h(q_h^*)-u_h(\Pi_hq^\dagger)\|_n \Big\|_{\psi_2}\leq c\gamma^{\frac12}\rho_0^{{\frac\kappa2}+1}\quad\mbox{and}\quad \Big\|\|q_h^*\|_{W^{1,4}\II}\Big\|_{\psi_2}  \leq c\rho_0^2.
\end{align*}
\end{lemma}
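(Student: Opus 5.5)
The plan is to mimic the proof of Lemma \ref{lem:apprx-u-bound-q}, with the comparison point $q^\dag$ replaced by $\Pi_h q^\dag\in\mathcal{A}_h$. Note that $\Pi_hq^\dag\in\mathcal{A}$, since nodal interpolation preserves the box constraint. Minimality of $q_h^*$ gives
\begin{equation*}
\|u_h(q_h^*)-\bm m\|_n^2+\gamma\|q_h^*\|_{W^{1,4}\II}^\kappa\leq \|u_h(\Pi_hq^\dag)-\bm m\|_n^2+\gamma\|\Pi_hq^\dag\|_{W^{1,4}\II}^\kappa .
\end{equation*}
Writing $\bm m=u^\dag+\bm\xi$ and setting $e_h:=u_h(q_h^*)-u_h(\Pi_hq^\dag)$, I expand both squares. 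This yields
\begin{equation*}
\|e_h\|_n^2+\gamma\|q_h^*\|_{W^{1,4}\II}^\kappa\leq 2(\bm\xi,e_h)_n+2\|e_h\|_n\|u^\dag-u_h(\Pi_hq^\dag)\|_n+c\gamma\rho_0^\kappa .
\end{equation*}
The last term uses the stability \eqref{W14-stab-Pih}. The middle term is at most $ch^2\|e_h\|_n$ by Lemma \ref{lem:point-err}, so Young's inequality absorbs it and leaves $ch^4$. As before, one of three events must dominate: the deterministic terms $h^4+\gamma\rho_0^\kappa$, or the stochastic term $F_1=2(\bm\xi,e_h)_n$. The deterministic events directly give the contributions $h^2+\gamma^{1/2}\rho_0^{\kappa/2}$ to $\eta$ and $(\gamma^{-1}h^4)^{1/\kappa}+\rho_0$ to $\omega$.

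For $F_1$, the difficulty is that $e_h\in X_h$ is not in $H^2\II$, so the entropy bound of Lemma \ref{Geer-entropy} for $H^2$-balls cannot be applied directly. I will split
\begin{equation*}
e_h=\big[u(q_h^*)-u(\Pi_hq^\dag)\big]+\big[(u_h-u)(q_h^*)-(u_h-u)(\Pi_hq^\dag)\big].
\end{equation*}
For the first part, Lemma \ref{Geer-entropy} with Lemma \ref{lem:cover-en-bound-Sobolev} ($r=d/4$) and the Lipschitz bound \eqref{eqn:cont-u} give
\begin{equation*}
\mathcal{O}_p(\sigma n^{-1/2})\,\|u(q_h^*)-u(\Pi_hq^\dag)\|_n^{1-d/4}\,\big(\|q_h^*\|^{\zeta}\rho_0^{\zeta}(\|q_h^*\|+\rho_0)\big)^{d/4}.
\end{equation*}
Here $\|u(q_h^*)-u(\Pi_hq^\dag)\|_n\le \|e_h\|_n+ch^2(\|q_h^*\|^{2\zeta}+\rho_0^{2\zeta})$ by Lemma \ref{lem:point-err}. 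For the FEM-error part, I would use the $H^2$-conforming interpolant $\widetilde\Pi_h$ of \eqref{eqn:w-Pi-h} together with inverse estimates. The idea is to view $w:=(u_h-u)(\cdot)$ through $\widetilde\Pi_h w$, which has $H^2$ norm $\lesssim h^{-2}\|w\|_{L^2}\lesssim \|q\|^{2\zeta}$. The remainder is controlled in $\|\cdot\|_n$ via Lemma \ref{lem:dis-equiv-n-L2} and the fact that $|(\bm\xi,v)_n|\le \|\bm\xi\|_n\|v\|_n$ with $\|\bm\xi\|_n=O_p(\sigma)$. This produces terms of the form $\sigma n^{-1/2}(h^2)^{1-d/4}\times$powers, i.e.\ $h^{4(1/2-d/8)}$. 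Converting $\sigma n^{-1/2}$ into $\gamma^{1/2+d/8}\rho_0^{-s}$ via \eqref{eqn:rho0-gamma} is exactly what produces the factors $\gamma^{d/8}h^{4(1/2-d/8)}\rho_0^{-s}$ and $(\gamma^{1/2+d/8}h^{2-d/2}\rho_0^{2\zeta+1-s})^{1/2}$ appearing in $\eta$.

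Finally, as in the continuous case, I split into the sub-cases $\|q_h^*\|_{W^{1,4}\II}\le\rho_0$ and $\|q_h^*\|_{W^{1,4}\II}\ge\rho_0$. In the second sub-case I derive the pair of inequalities analogous to \eqref{con-event-ii-alternative-ineq}, now with additional $h$-dependent terms. Solving this nonlinear system, using $\kappa=4\zeta$ and $\kappa-2\zeta-1>0$, gives the remaining terms with exponents $\frac{2}{\kappa}$, $\frac1\kappa$, $\frac{1}{\kappa-2\zeta-1}$ and $\frac{\kappa}{2(\kappa-2\zeta-1)}$ in $\omega$ and $\eta$.

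The main obstacle is this stochastic FEM-error term and the bookkeeping when solving the resulting system, since the power of $\|q_h^*\|$ can exceed $\kappa$ in intermediate steps. I would first try to bound $\|q_h^*\|$ alone by balancing $\gamma\|q_h^*\|^\kappa$ against the $h$-terms, then substitute back to get $\|e_h\|_n$.

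The second claim is then a verification. With $h=O(\gamma^{1/4})$ we get $\gamma^{-1}h^4=O(1)$ and $\gamma^{-1/2+d/8}h^{2-d/2}=O(1)$. Each term of $\eta$ is then at most $c\gamma^{1/2}\rho_0^{\kappa/2+1}$, using $\rho_0\ge$ const and $-s\le\kappa/2+1$ etc., and each term of $\omega$ is at most $c\rho_0^2$.
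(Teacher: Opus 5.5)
\textbf{There is a genuine gap, in the step you yourself flag as the crux: the stochastic FEM-error term.} The rest of your plan matches the paper: comparing with $\Pi_hq^\dag$, absorbing $\|u^\dag-u_h(\Pi_hq^\dag)\|_n$ via Lemma \ref{lem:point-err}, using Lemma \ref{Geer-entropy} for the $H^2\II$ part, the two sub-cases, and the final check with $h=O(\gamma^{\frac14})$.

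The bound $|(\bm\xi,v)_n|\le\|\bm\xi\|_n\|v\|_n$ with $\|\bm\xi\|_n=\mathcal{O}_p(\sigma)$ carries no factor $n^{-\frac12}$. The remainder you feed into it is in general of the same order $h^2\|q_h^*\|_{W^{1,4}\II}^{2\zeta}$ as the FEM error itself. So you get a term $\sigma h^2\|q_h^*\|^{2\zeta}_{W^{1,4}\II}$ on the right-hand side, and hence only $\|e_h\|_n\lesssim\sigma^{\frac12}h\|q_h^*\|^{\zeta}_{W^{1,4}\II}$. This cannot be converted through \eqref{eqn:rho0-gamma} into any term of $\eta$. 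With $h=O(\gamma^{\frac14})$ it is of order $\sigma^{\frac12}\gamma^{\frac14}\gg\gamma^{\frac12}$ as $n\to\infty$. So your claim that this step ``produces terms of the form $\sigma n^{-1/2}(h^2)^{1-d/4}$'' does not follow.

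There is a second problem: $\widetilde\Pi_h w$ is not defined for $w=(u_h-u)(q)$. Indeed, $u_h(q)\notin H^2\II$, and the $C^1$ interpolants behind \eqref{eqn:w-Pi-h} need nodal derivatives. They are also not $L^2\II$-stable, so your inverse-estimate bound on $\|\widetilde\Pi_h w\|_{H^2\II}$ is unjustified. At the deterministic argument $\Pi_hq^\dag$ none of this matters, since a plain sub-Gaussian bound gives $\sigma n^{-\frac12}\|v\|_n$. The difficulty is the data-dependent $q_h^*$.

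\textbf{The missing idea} is a uniform empirical-process bound over a finite-dimensional class. Apply $\widetilde\Pi_h$ only to $u(q)\in H^2\II$, and note that $u_h(q_h)-\widetilde\Pi_hu(q_h)$ lies in the fixed space $X_h+W_h$ of dimension $O(h^{-d})$ for every $q_h\in\mathcal{A}_h$.

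The paper splits $e_h$ as $[u_h(q_h^*)-\widetilde\Pi_hu(q_h^*)]+[\widetilde\Pi_hu(q_h^*)-\widetilde\Pi_hu(\Pi_hq^\dag)]+[\widetilde\Pi_hu(\Pi_hq^\dag)-u_h(\Pi_hq^\dag)]$. It applies Lemma \ref{Geer-entropy} only to the middle term, which lies in $W_h\subset H^2\II$. For the first term it uses the self-normalized process $q_h\mapsto(\bm\xi,F_1^d(q_h))_n$, with $F_1^d(q_h)=(u_h(q_h)-\widetilde\Pi_hu(q_h))/(\|u_h(q_h)-u_h(\Pi_hq^\dag)\|_n+\gamma^{\frac12}\|q_h\|_{W^{1,4}\II}^{\frac\kappa2})$.

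Since $\kappa=4\zeta$, the factor $h^2\|q_h\|^{2\zeta}_{W^{1,4}\II}$ cancels against the denominator. So the diameter is $c\sigma n^{-\frac12}h^2\gamma^{-\frac12}$ uniformly in $q_h$, which is what handles the randomness of $q_h^*$. Lemma \ref{lem:max-ineq} together with Lemma \ref{lem:cover-en-bound-finite} then gives $\big\|\sup_{q_h\in\mathcal{A}_h}|(\bm\xi,F_1^d(q_h))_n|\big\|_{\psi_2}\le c\sigma n^{-\frac12}h^{-\frac d2}h^2\gamma^{-\frac12}$. This gains a factor $(nh^d)^{-\frac12}$ over Cauchy--Schwarz, and that gain produces the term $\gamma^{\frac d8}h^{2-\frac d2}\rho_0^{-s}$ in $\eta$ and $(\gamma^{-\frac12+\frac d8}h^{2-\frac d2}\rho_0^{-s})^{\frac2\kappa}$ in $\omega$. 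With this replacement, the rest of your argument essentially coincides with the paper's Steps 2--3.
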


\begin{proof}
 Since $q_h^*\in\mathcal{A}_h$ is a minimizer and $\Pi_hq^\dagger\in\mathcal{A}_h$, there holds
\begin{equation*}
	\|u_h(q_h^*)-\bm m\|_{n}^2 + \gamma \|q_h^*\|_{W^{1,4}\II}^\kappa\leq \|u_h(\Pi_hq^\dagger)-\bm m\|_{n}^2 + \gamma\|\Pi_hq^\dagger\|_{W^{1,4}\II}^\kappa.
\end{equation*}
By the definition of the data $\bm{m}$ and the Cauchy-Schwarz inequality, we obtain 
\begin{align*}
\|u_h(q_h^*)-u_h(\Pi_hq^\dagger)\|_n^2 + \gamma \| q_h^*\|_{W^{1,4}\II}^\kappa & \leq \sum_{i=1}^5F_i,
\end{align*}
with the five terms respectively given by 
\begin{align*}
& F_1 := 2\big(\bm\xi,u_h(q_h^*) - \widetilde{\Pi}_hu(q_h^*)\big)_n, \\
& F_2 := 2\big(\bm\xi,\widetilde{\Pi}_hu(q_h^*) - \widetilde{\Pi}_hu(\Pi_hq^\dagger)\big)_n, \\
& F_3 := 2\big(\bm\xi, \widetilde{\Pi}_hu(\Pi_hq^\dagger) - u_h(\Pi_hq^\dagger)\big)_n, \\
& F_4 := 2\|u^\dagger-u_h(\Pi_hq^{\dagger})\|_n\|u_h(q_h^*) - u_h(\Pi_hq^\dagger)\|_n, \\
& F_5 := \gamma \|\Pi_hq^\dag\|_{W^{1,4}\II}^\kappa.
\end{align*}
Then one of the following five events must happen: 
\begin{equation*}
\|u_h(q_h^*)-u_h(\Pi_hq^\dagger)\|_n^2 + \gamma \| q_h^*\|_{W^{1,4}\II}^\kappa \leq c F_i, \quad i=1,\ldots,5,    
\end{equation*}
and the events are also denoted by $F_i$. If the event $F_4$ happens, by Lemma \ref{lem:point-err}, we have
\begin{equation*}
\|u_h(q_h^*)-u_h(\Pi_hq^\dagger)\|_n\leq ch^2 \quad \mbox{and}\quad  \| q_h^*\|_{W^{1,4}\II} \leq c(\gamma^{-1}h^4)^\frac{1}{\kappa}.    
\end{equation*}
In the event of $F_5$, by the definition of $\rho_0$ and the $W^{1,4}(\Omega)$ stability \eqref{W14-stab-Pih} of the operator $\Pi_h$, we derive
\begin{equation*}
\|u_h(q_h^*)-u_h(\Pi_hq^\dagger)\|_n^2 + \gamma \| q_h^*\|_{W^{1,4}\II}^\kappa \leq c\gamma\rho_0^\kappa.    
\end{equation*}
By combining bounds for the events $F_4$ and $F_5$, we deduce
\begin{equation*}
\Big\|\|u_h(q_h^*)-u_h(\Pi_hq^\dagger)\|_n \Big\|_{\psi_2} \leq c(h^2 + \gamma^{\frac12}\rho_0^{\frac{\kappa}{2}})\quad\mbox{and}\quad \Big\|\| q_h^*\|_{W^{1,4}\II}\Big\|_{\psi_2} \leq c \big((\gamma^{-1} h^4)^{\frac1\kappa} +  \rho_0\big).    
\end{equation*}
Thus it suffices to bound the remaining three terms $F_1$, $F_2$ and $F_3$ separately. The lengthy proof is divided into three steps below, each dealing with one term.
\\
\textbf{Step 1. Bound $F_1$.} We define an auxiliary function $F_1^d(q_h)$ by 
\begin{equation*}
F_1^d(q_h):= \frac{u_h(q_h) - \widetilde{\Pi}_hu(q_h)}{\|u_h(q_h)-u_h(\Pi_hq^\dagger)\|_n + \gamma^{\frac12} \| q_h\|_{W^{1,4}\II}^{\frac\kappa 2}}\in X_h + W_h.
\end{equation*}
First, we claim that $\{\big(\bm{\xi}, F_1^d(q_h) \big)_n, q_h\in\mathcal{A}_h \}$ is a sub-Gaussian random process equipped with the semi-metric
\begin{equation}\label{discrete-semidis}
    \widehat{\rbd}(q_h,\widetilde{q}_h):= \sigma n^{-\frac12}\| F_1^d(q_h) -  F_1^d(\widetilde{q}_h)\|_n, \quad \forall q_h,\widetilde{q}_h \in \mathcal{A}_h.
\end{equation}
Since the random variables $(\xi_j)_{j=1}^n$ are  i.i.d. sub-Gaussian with parameter $\sigma$ and zero expectation, the following estimate holds
\begin{align*} 
&\mathbb{E}\Big[\exp\Big(\lambda\big(\bm{\xi}, F_1^d(q_h) - F_1^d(\widetilde{q}_h) \big)_n  \Big)\Big] = \mathbb{E}\Big[\exp\Big(\lambda n^{-1}\sum_{j=1}^n\xi_j\big(F_1^d(q_h) - F_1^d(\widetilde{q}_h)\big)(x_j)  \Big)\Big] \\
 =& \prod_{j=1}^n \mathbb{E}\Big[\exp\Big(\lambda n^{-1}\xi_j\big(F_1^d(q_h) - F_1^d(\widetilde{q}_h)\big)(x_j)  \Big)\Big]  \leq \exp\Big(\tfrac12\sigma^2\lambda^2n^{-2}\sum_{j=1}^n \big|\big(F_1^d(q_h) - F_1^d(\widetilde{q}_h)\big)(x_j)\big|^2\Big).
\end{align*}
Thus, the desired claim holds.
By the approximation property \eqref{eqn:w-Pi-h} of the operator $\widetilde\Pi_h$, and Lemmas \ref{lem:reg-u} and \ref{lem:point-err}, we get
\begin{align}
     \|u_h(q_h)-\widetilde\Pi_hu(q_h)\|_n  & \leq \|u_h(q_h)-u(q_h)\|_n + \|u(q_h)-\widetilde\Pi_hu(q_h)\|_n \nonumber\\
     & \leq ch^2\| q_h\|_{W^{1,4}\II}^{2\zeta} +  ch^2\| q_h\|_{W^{1,4}\II}^{\zeta} \leq ch^2\| q_h\|_{W^{1,4}\II}^{2\zeta}. \label{ineq:preliminary-estimate}  
\end{align}
Similarly, the following estimate holds
\begin{equation}\label{dis-F1-estimate in L2}
\|u_h(q_h)-\widetilde\Pi_hu(q_h)\|_{L^2(\Omega)} \leq ch^2\| q_h\|_{W^{1,4}\II}^{2\zeta}.    
\end{equation}
Then the triangle inequality and the choice $\kappa=4\zeta$ lead to
\begin{align}
    &\diam(\mathcal{A}_h)  = \sup_{q_h,\widetilde{q}_h\in\mathcal{A}_h}\widehat{\rbd}(q_h,\widetilde{q}_h) \leq 2\sigma n^{-\frac12}\sup_{q_h\in\mathcal{A}_h}\| F_1^d(q_h)\|_n\nonumber\\
     \leq &c\sigma n^{-\frac12}h^2\| q_h\|_{W^{1,4}\II}^{2\zeta}\Big(\gamma^{\frac12}\|q_h\|^{\frac\kappa2}_{W^{1,4}\II}\Big)^{-1} \leq c\sigma n^{-\frac12}h^2\gamma^{-\frac 12}.\label{dis-F1-estimate-diameter}   
\end{align}
Lemmas \ref{lem:max-ineq} and \ref{lem:cover-en-bound-finite}, the estimates \eqref{dis-F1-estimate in L2} and \eqref{dis-F1-estimate-diameter}, the elementary inequality $\log(1+x) \leq x$ over $\mathbb{R}_+$, the estimate $\mathrm{dim}(X_h + W_h)=O(h^{-d})$, and the choice $\gamma^{\frac12 + \frac d8} = O(\sigma n^{-\frac12}\rho_0^s)$ give
\begin{align*}
 &\quad \Big\|\,\sup_{q_h\in \mathcal{A}_h}|(\bm{\xi}, F_1^d(q_h) )_n|\,\Big\|_{\psi_2} \leq c\int_0^{c\sigma n^{-\frac12}h^2\gamma^{-\frac12}}\sqrt{\log N\big(\tfrac{\epsilon}{2}, \mathcal{A}_h, \widehat{\rbd}\big)} \ {\rm d}\epsilon
    \\& \leq c\int_0^{c\sigma n^{-\frac12}h^2\gamma^{-\frac12}}\sqrt{\log N\big(\tfrac{\epsilon}{c\sigma n^{-\frac12}}, X_h + W_h, \|\cdot\|_{L^2\II}\big)} \ {\rm d}\epsilon \\
    &\leq c\int_0^{c\sigma n^{-\frac12}h^2\gamma^{-\frac12}} \dim(X_h + W_h)^{\frac12}\bigg(\log\Big( 1+ c\sigma n^{-\frac12}\sup_{q_h\in \mathcal{A}_h}\|F_1^d(q_h)\|_{L^2\II}\epsilon^{-1}\Big)\bigg)^{\frac12}\ {\rm d}\epsilon \\
    &\leq c\int_0^{c\sigma n^{-\frac12}h^2\gamma^{-\frac12}} \dim(X_h + W_h)^{\frac12}\Big(c\sigma n^{-\frac12}h^2\gamma^{-\frac12}\epsilon^{-1}\Big)^{\frac12}\ {\rm d}\epsilon \\& \leq c\sigma n^{-\frac12}h^{-\frac{d}{2}}h^2\gamma^{-\frac12} \leq c\gamma^{\frac{d}{8}}h^{2-\frac d2}\rho_0^{-s}.
\end{align*}
This implies the following two estimates
\begin{equation*}
\Big\|\|u_h(q_h^*)-u_h(\Pi_hq^\dagger)\|_n \Big\|_{\psi_2} \leq c\gamma^{\frac d8}h^{4(\frac12-\frac d8)}\rho_0^{-s} \quad\mbox{and}\quad \Big\|\| q_h^*\|_{W^{1,4}\II}\Big\|_{\psi_2} \leq c (\gamma^{-\frac12+\frac d8}h^{4(\frac12-\frac d8)}\rho_0^{-s})^{\frac 2\kappa}.
\end{equation*}
\textbf{Step 2. Bound $F_3$.} By the definition of the event $F_3$, the choice $\kappa = 4\zeta$ and the estimate 
\eqref{W14-stab-Pih}, there holds
\begin{equation}\label{dis-F3-estimate}
F_3 = 2\left(\bm\xi, \frac{\widetilde{\Pi}_hu(\Pi_hq^\dagger) - u_h(\Pi_hq^\dagger)}{\gamma^{\frac12}\|\Pi_hq^\dagger\|_{W^{1,4}\II}^{\frac{\kappa}{2}}}\right)_n\gamma^{\frac12}\|\Pi_hq^\dagger\|_{W^{1,4}\II}^{\frac{\kappa}{2}} \leq c\sup_{q_h\in\mathcal{A}_h}|\big(\bm\xi, F_3^d(q_h)\big)_n| \gamma^{\frac12}\rho_0^{2\zeta},
\end{equation}
with the auxiliary function $F_3^d(q_h)$ given by
\begin{equation*}
F_3^d(q_h):=\frac{\widetilde{\Pi}_hu(q_h) - u_h(q_h)}{\gamma^{\frac12}\|q_h\|_{W^{1,4}\II}^{\frac{\kappa}{2}}}\in X_h+W_h.   
\end{equation*}
Repeating the argument in Step 1 shows that $\{\big(\bm{\xi}, F_3^d(q_h) \big)_n, q_h\in\mathcal{A}_h \}$ is a sub-Gaussian random process equipped with the semi-metric
\begin{equation*}
    \widehat{\rbd}(q_h,\widetilde{q}_h):= \sigma n^{-\frac12}\| F_3^d(q_h) -  F_3^d(\widetilde{q}_h)\|_n, \quad \forall q_h,\widetilde{q}_h \in \mathcal{A}_h.
\end{equation*}
Thus we arrive at
\begin{align*}
\Big\|\,\sup_{q_h\in \mathcal{A}_h}|(\bm{\xi}, F_3^d(q_h) )_n|\,\Big\|_{\psi_2} \leq c\sigma n^{-\frac12}h^{-\frac{d}{2}}h^2\gamma^{-\frac12} \leq c\gamma^{\frac{d}{8}}h^{2-\frac d2}\rho_0^{-s}.
\end{align*}
This estimate,  the inequality \eqref{dis-F3-estimate} and the choice $\kappa =4\zeta$ imply
\begin{equation*}
\Big\|F_3\Big\|_{\psi_2} \leq c\Big\|\sup_{q_h\in\mathcal{A}_h}|\big(\bm\xi, F_3^d(q_h)\big)_n|\Big\|_{\psi_2} \gamma^{\frac12}\rho_0^{2\zeta} \leq c\gamma^{\frac12 + \frac{d}{8}}h^{2-\frac d2}\rho_0^{2\zeta-s}.   
\end{equation*}
Consequently, we deduce
\begin{align*}
\Big\|\|u_h(q_h^*)-u_h(\Pi_hq^\dagger)\|_n\Big\|_{\psi_2} &\leq c(\gamma^{\frac12 + \frac{d}{8}}h^{2-\frac d2}\rho_0^{2\zeta-s})^{\frac12},\\
\Big\|\| q_h^*\|_{W^{1,4}\II}\Big\|_{\psi_2} &\leq  c(\gamma^{-\frac12 + \frac{d}{8}}h^{2-\frac d2}\rho_0^{2\zeta-s})^{\frac 1\kappa}.  
\end{align*}
\textbf{Step 3. Bound $F_2$.} By the triangle inequality and the estimate 
\eqref{ineq:preliminary-estimate}, there holds
\begin{equation}\label{ineq:boundF2-pre-estimate}
\begin{split}
&\|\widetilde{\Pi}_hu(q_h^*) - \widetilde{\Pi}_hu(\Pi_hq^\dag)\|_{n}\\
 \leq & \|u_h(q_h^*) - u_h(\Pi_hq^\dag)\|_{n}+\|\widetilde{\Pi}_hu(q_h^*)-u_h(q_h^*)\|_n + \|\widetilde{\Pi}_hu(\Pi_hq^\dag)-u_h(\Pi_hq^\dag)\|_n\\
 \leq  & \|u_h(q_h^*) - u_h(\Pi_hq^\dag)\|_{n}+ch^2\Big(\|q_h^*\|^{2\zeta}_{W^{1,4}(\Omega)} + \|\Pi_hq^\dag\|^{2\zeta}_{W^{1,4}(\Omega)}\Big).
    \end{split}
\end{equation}
Since $\widetilde{\Pi}_hu(q_h^*) - \widetilde{\Pi}_hu(\Pi_hq^\dag)\in W_h\subset H^2\II$, Lemmas \ref{Geer-entropy} and \ref{lem:reg-u} and the $H^2\II$ stability \eqref{eqn:w-Pi-h} of the operator $\widetilde{\Pi}_h$ give
\begin{align}
  &\big(\bm\xi,\widetilde{\Pi}_hu(q_h^*) - \widetilde{\Pi}_hu(\Pi_hq^\dag)\big)_n \\
  \leq &\mathcal{O}_{p}\left(\sigma n^{-\frac12} \|\widetilde{\Pi}_hu(q_h^*) - \widetilde{\Pi}_hu(\Pi_hq^\dag)\|_{n}^{1- \frac d4}\|\widetilde{\Pi}_hu(q_h^*) - \widetilde{\Pi}_hu(\Pi_hq^\dag)\|_{H^2(\Omega)}^{\frac d4}\right) \nonumber\\
   \leq &\mathcal{O}_{p}\left(\sigma n^{-\frac12}\right) \big[\|u_h(q_h^*) - u_h(\Pi_hq^\dag)\|_{n}+ch^2(\|q_h^*\|^{2\zeta}_{W^{1,4}(\Omega)} + \|\Pi_hq^\dag\|^{2\zeta}_{W^{1,4}(\Omega)})\big]^{1- \frac d4} \nonumber\\&\times \big[\big(\|q_h^*\|_{W^{1,4}\II}+\|\Pi_hq^\dag\|_{W^{1,4}\II}\big)\|q_h^*\|_{W^{1,4}\II}^\zeta\|\Pi_hq^\dag\|_{W^{1,4}\II}^\zeta\big]^{\frac d4}.\label{eqn:est-al0}
\end{align}
Next we analyze the following two subcases separately: (i) $\|q_h^*\|_{W^{1,4}\II} \leq \|\Pi_hq^\dag\|_{W^{1,4}\II}$ and (ii) $\|q_h^*\|_{W^{1,4}\II} > \|\Pi_hq^\dag\|_{W^{1,4}\II}$.
First consider the event (i). By the stability estimate \eqref{W14-stab-Pih}, $$\|q_h^*\|_{W^{1,4}\II} \leq \|\Pi_hq^\dag\|_{W^{1,4}\II}\leq c\rho_0.$$ 
Moreover, from the estimates \eqref{W14-stab-Pih} and \eqref{eqn:est-al0}, we obtain 
\begin{align*}
    & \|u_h(q_h^*) - u_h(\Pi_hq^\dag)\|_{n}^2  \\  \leq& \mathcal{O}_{p}\left(\sigma n^{-\frac12}\right) \Big(\|u_h(q_h^*) - u_h(\Pi_hq^\dag)\|_{n}^{1- \frac d4}\|\Pi_hq^\dag\|_{W^{1,4}\II}^{\frac{(2\zeta+1)d}{4}}+h^{4(\frac12-\frac d8)}\|\Pi_hq^\dag\|_{W^{1,4}\II}^{2\zeta+1}\Big) \\ \leq& 
    \mathcal{O}_{p}\left(\sigma n^{-\frac12}\right) \Big(\|u_h(q_h^*) - u_h(\Pi_hq^\dag)\|_{n}^{1- \frac d4}\rho_0^{\frac{(2\zeta+1)d}{4}}+h^{4(\frac12-\frac d8)}\rho_0^{2\zeta+1}\Big).
\end{align*}
This implies that one of the follow two events must hold
\begin{equation*}
\left\{\begin{aligned}
\|u_h(q_h^*)-u_h(\Pi_hq^\dag)\|_n^{1+\frac d4} &\leq \mathcal{O}_{p}\Big(\sigma n^{-\frac12}\Big)\rho_0^{\frac{(2\zeta+1)d}{4}}, \\
\|u_h(q_h^*)-u_h(\Pi_hq^\dag)\|^2_n &\leq \mathcal{O}_{p}\Big(\sigma n^{-\frac12}\Big)h^{4(\frac12-\frac d8)}\rho_0^{2\zeta+1}.
\end{aligned}\right.
\end{equation*}
The argument in \eqref{con-event-i} and \eqref{con-event-i-state-approx}, and the choice $\gamma^{\frac12 + \frac d8}= O(\sigma n^{-\frac12}\rho_0^s)$ give
\begin{equation*}
    \Big\|\|u_h(q_h^*)-u_h(\Pi_hq^\dag)\|_n\Big\|_{\psi_2} \leq c [\gamma^{\frac12} \rho_0^{\frac{\kappa}{2}} + (\gamma^{\frac12 + \frac d8}h^{4(\frac12-\frac d8)}\rho_0^{2\zeta+1-s} )^\frac12 ]\ \mbox{ and }\ \Big\|\|q_h^*\|_{W^{1,4}\II}\Big\|_{\psi_2}\leq c\rho_0.
\end{equation*}
Next consider the event (ii). Similarly, by the box constraint of $\mathcal{A}$, we get
\begin{align*}
    & \|u_h(q_h^*) - u_h(\Pi_hq^\dag)\|_{n}^2 + \gamma \|q_h^*\|_{W^{1,4}\II}^\kappa \\  \leq & \mathcal{O}_{p}\left(\sigma n^{-\frac12}\right) \Big(\|u_h(q_h^*) - u_h(\Pi_hq^\dag)\|_{n}^{1- \frac d4}\|q_h^*\|_{W^{1,4}\II}^{\frac{(2\zeta+1)d}{4}}+h^{4(\frac12-\frac d8)}\|q_h^*\|_{W^{1,4}\II}^{2\zeta(1-\frac{d}{4})+(2\zeta+1)\frac{d}{4}}\Big) \\
\leq &\mathcal{O}_{p}\left(\sigma n^{-\frac12}\right) \Big(\|u_h(q_h^*) - u_h(\Pi_hq^\dag)\|_{n}^{1- \frac d4}\|q_h^*\|_{W^{1,4}\II}^{\frac{(2\zeta+1)d}{4}}+h^{4(\frac12-\frac d8)}\|q_h^*\|_{W^{1,4}\II}^{2\zeta+1}\Big).
\end{align*}
Thus one of the follow two events must hold
\begin{equation}\label{alternative-estimate}
\left\{\begin{aligned}
\|u_h(q_h^*) - u_h(\Pi_hq^\dag)\|_{n}^2 + \gamma \|q_h^*\|_{W^{1,4}\II}^\kappa &\leq \mathcal{O}_{p}\Big(\sigma n^{-\frac12}\Big)\|u_h(q_h^*) - u_h(\Pi_hq^\dag)\|_{n}^{1- \frac d4}\|q_h^*\|_{W^{1,4}\II}^{\frac{(2\zeta+1)d}{4}}, \\
\|u_h(q_h^*) - u_h(\Pi_hq^\dag)\|_{n}^2 + \gamma \|q_h^*\|_{W^{1,4}\II}^\kappa &\leq \mathcal{O}_{p}\Big(\sigma n^{-\frac12}\Big)h^{4(\frac12-\frac d8)}\|q_h^*\|_{W^{1,4}\II}^{2\zeta+1}.
\end{aligned}\right.
\end{equation}
From the first inequality in \eqref{alternative-estimate}, we obtain
\begin{align*}
\left\{\begin{aligned}    \gamma\|q_h^*\|_{W^{1,4}\II}^{\kappa - \frac{(2\zeta+1)d}{4}} &\leq \mathcal{O}_{p}(\sigma n^{-\frac12})\|u_h(q_h^*) - u_h(\Pi_hq^\dag)\|_{n}^{1- \frac d4}, \\
    \|u_h(q_h^*) - u_h(\Pi_hq^\dag)\|_{n}^{1+\frac d4} &\leq \mathcal{O}_{p}\left(\sigma n^{-\frac12}\right) \|q_h^*\|_{W^{1,4}\II}^{\frac{(2\zeta+1)d}{4}}.
\end{aligned}\right.
\end{align*}
Similarly to \eqref{con-event-ii-alternative-ineq}, solving system of inequalities  and using the choice $\gamma^{\frac12 + \frac d8}= O(\sigma n^{-\frac12}\rho_0^s)$ yield
\begin{equation*}
\Big\|\|u_h(q_h^*) - u_h(\Pi_hq^\dag)\|_n\Big\|_{\psi_2} \leq c\gamma^{\frac12}\rho_0^{\frac{\kappa}{2}}\quad\mbox{and}\quad \Big\|\|q_h^*\|_{W^{1,4}\II} \Big\|_{\psi_2} \leq c\rho_0.
\end{equation*}
Last, solving the second inequality in \eqref{alternative-estimate} gives
\begin{align*}
\Big\|\|q_h^*\|_{W^{1,4}\II}\Big\|_{\psi_2}&\leq c(\gamma^{-\frac12+\frac d8}h^{4(\frac12-\frac d8)}\rho_0^{-s})^{\frac{1}{\kappa-2\zeta-1}}, \\  
\Big\|\|u_h(q_h^*) - u_h(\Pi_hq^\dag)\|_{n}\Big\|_{\psi_2} &\leq c\gamma^{\frac12}(\gamma^{-\frac12+\frac d8}h^{4(\frac12-\frac d8)}\rho_0^{-s})^{\frac{\kappa}{2(\kappa-2\zeta-1)}}.
\end{align*}
Combining the preceding estimates gives the first assertion. Then choosing $h = O(\gamma^\frac14)$ gives the second estimate.
\end{proof}

\begin{remark}
Lemma \ref{lem:approx-sol-uppbound-qh} is a discrete analogue of Lemma \ref{lem:apprx-u-bound-q}, but the exponent on $\rho_0$ is larger. This is due to the use of the duality argument in establishing the $L^2(\Omega)$ error bound on the state approximation $u_h(q)$; see the proof of Lemma \ref{lem:point-err}.
\end{remark}

Now we can state the proof of Theorem \ref{thm:err-conduc}.

\begin{proof}
    By the estimate \eqref{ineq:proba-distrib-estimate}, and Lemmas  \ref{lem:point-err} and \ref{lem:approx-sol-uppbound-qh}, the following two estimates hold simultaneously with probability at least $1-2\tau$
\begin{align*}
\|u_h(q_h^*)-u^\dag\|_n&\leq \|u_h(q_h^*)-u_h(\Pi_hq^\dag)\|_n + \|u_h(\Pi_hq^\dag)-u^\dag\|_n \leq c\big(\eta\ell_\tau + h^2\big),\\ 
\|q_h^*\|_{W^{1,4}\II} &\leq c\omega\ell_\tau .
\end{align*} 
Using the weak formulations of $u^\dag$ and $u_h(q_h^*)$, we obtain for any $\varphi\in H_0^1\II$,
\begin{align*}
    ((q^\dag - q_h^*)\nabla u^\dag, \nabla \varphi) & = ((q^\dag - q_h^*)\nabla u^\dag, \nabla (\varphi - P_h\varphi)) + ((q^\dag - q_h^*)\nabla u^\dag, \nabla P_h\varphi) \\
    & = -(\nabla\!\cdot\!\big[(q^\dag - q_h^*)\nabla u^\dag\big], \varphi - P_h\varphi) + (q_h^*\nabla (u_h(q_h^*) - u^\dag), \nabla P_h\varphi) = :{\rm I} + {\rm II}.
\end{align*}
Let $\varphi = \frac{q^\dag - q_h^*}{q^\dag} u^\dag$. Then repeating the argument of the estimate \eqref{estimate-grad-varphi} and Poincar\'{e} inequality give
\begin{equation*}
    \|\varphi\|_{H^1\II}\leq c\|\nabla \varphi\|_{L^2\II} \leq c \omega\ell_\tau.
\end{equation*}
Hence $\varphi\in H_0^1\II$. Next we bound the terms ${\rm I}$ and ${\rm II}$ separately. By the triangle inequality, and Lemmas \ref{lem:Sob-embed} and \ref{lem:reg-u},
\begin{align*}
     \|\nabla\!\cdot\!\big[(q^\dag - q_h^*)\nabla u^\dag\big]\|_{L^2\II}& \leq \|\nabla (q^\dag - q_h^*)\cdot\nabla u^\dag\|_{L^2\II} + \|(q^\dag - q_h^*)\Delta u^\dag\|_{L^2\II} \\
    & \leq \|\nabla (q^\dag - q_h^*)\|_{L^4\II}\|\nabla u^\dag\|_{L^4\II} + \|q^\dag - q_h^*\|_{L^\infty\II}\|\Delta u^\dag\|_{L^2\II} \\
    & \leq c\|q^\dag - q_h^*\|_{W^{1,4}\II} \|u^\dag\|_{H^2\II} \leq c\|q^\dag - q_h^*\|_{W^{1,4}\II}\|q^\dag\|_{W^{1,4}\II}^\zeta.
\end{align*}
Thus the Cauchy-Schwarz inequality leads to
\begin{align*}
    & |{\rm I}| \leq ch\|\nabla\!\cdot\!\big[(q^\dag - q_h^*)\nabla u^\dag\big]\|_{L^2\II}\|\nabla \varphi\|_{L^2\II}   \\
    \leq&  ch\|q^\dag - q_h^*\|_{W^{1,4}\II}\|q^\dag\|_{W^{1,4}\II}^\zeta\|\nabla \varphi\|_{L^2\II} \leq ch(\omega \ell_\tau)^{\zeta+2}.
\end{align*}
Next  we split the term ${\rm II}$ into two parts
\begin{equation*}
    {\rm II} = (q_h^*\nabla (u_h(q_h^*) - u(q_h^*)), \nabla P_h\varphi) + (q_h^*\nabla (u(q_h^*) - u^\dag), \nabla P_h\varphi) = : {\rm II}_1 + {\rm II}_2.
\end{equation*}
Using  C\'{e}a's lemma \cite[Chapter 5]{BrennerScott:book2008} and Lemma \ref{lem:reg-u}, there holds
\begin{equation*}
\|\nabla (u_h(q_h^*) - u(q_h^*))\|_{L^2\II} \leq ch \| u(q_h^*)\|_{H^2\II}\leq ch\|q_h^*\|_{W^{1,4}\II}^\zeta \leq  ch(\omega\ell_\tau)^\zeta.   
\end{equation*}
By H\"{o}lder's inequality, the box constraint of $\mathcal{A}$ and the $H^1\II$ stability \eqref{ineq:P_h-approx} of the operator $P_h$, we get
\begin{align*}
    |{\rm II}_1| &\leq c\|\nabla (u_h(q_h^*) - u(q_h^*))\|_{L^2\II} \|\nabla P_h\varphi\|_{L^2\II} \leq ch(\omega\ell_\tau)^\zeta \|\nabla\varphi\|_{L^2\II} \leq ch(\omega \ell_\tau)^{\zeta+1}.
\end{align*}
Next by the Gagliardo–Nirenberg interpolation inequality, and Lemmas \ref{lem:connect-seminorm-stdnorm} and \ref{lem:point-err}, we obtain
\begin{align*}
& \|\nabla (u(q^*_h) - u^\dag)\|_{L^2\II}  \leq c\|u(q^*_h) - u^\dag\|_{L^2\II}^{\frac12}\|u(q^*_h) - u^\dag\|_{H^2\II}^{\frac12} \\
 \leq &c\big[\|u(q^*_h) - u^\dag\|_n + n^{-\frac2d}\|u(q^*_h) - u^\dag\|_{H^2\II}\big]^{\frac12}\|u(q^*_h) - u^\dag\|_{H^2\II}^{\frac12} \\
 \leq &c\big[\|u_h(q^*_h) - u^\dag\|_n + h^2\|q_h^*\|_{W^{1,4}\II}^{2\zeta} + n^{-\frac2d}\|u(q^*_h) - u^\dag\|_{H^2\II}\big]^{\frac12}\|u(q^*_h) - u^\dag\|_{H^2\II}^{\frac12} \\
\leq &c(\eta + h^2 + n^{-\frac2d})^{\frac12}(\|q_h^*\|_{W^{1,4}\II}^{2\zeta+1} + \|q^\dag\|_{W^{1,4}\II}^{2\zeta+1}) \leq  c(\eta + h^2 + n^{-\frac2d})^{\frac12}(\omega \ell_\tau)^{2\zeta+1}.
\end{align*}
By H\"{o}lder's inequality, the box constraint of $\mathcal{A}$ and the $H^1\II$ stability of $P_h$, we arrive at 
\begin{equation*}
    {\rm II_2} \leq c\|\nabla (u(q^*_h) - u^\dag)\|_{L^2\II}\|\nabla \varphi\|_{L^2\II} \leq c(\eta + h^2 + n^{-\frac2d})^{\frac12}(\omega \ell_\tau)^{2\zeta+2}.
\end{equation*}
By combining the preceding estimates with the following identity
\begin{equation*}
((q^\dag - q^*_h)\nabla u^\dag,\nabla\varphi) = \frac12\int_{\Omega}\Big(\frac{q^\dag - q^*_h}{q^\dag}\Big)^2(fu^\dag + q^\dag|\nabla u^\dag|^2)\ {\rm d}x,    
\end{equation*}
we arrive at the first estimate. The rest of the proof follows identically as Theorem \ref{thm:con-err-conduc}.
\end{proof}

\section{Numerical experiments and discussions}
\label{sec:numer}

Now we present numerical experiments to complement the theoretical analysis. The lower and upper bounds of the admissible set $\mathcal{A}$ are taken to be $c_0 = 1.0$ and $c_1 = 3.0$, respectively. The domain $\Omega$ is the unit square $(0,1)^2$. The observation points $(x_i)_{i=1}^n$ are uniformly distributed over the domain $\Omega$. We divide the domain $\Omega$ into the shape regular finite element mesh $\mathcal{T}_h$, and obtain the reference solution $u^\dag$ using a finer grid with a mesh size $h=\frac{1}{100}$. The noisy data $\boldsymbol{m}$ is generated by
\begin{equation*}
	m_i = u^\dagger(x_i) + \sigma\|u^\dagger\|_{L^\infty(\Omega)}\xi_i,\quad i=1,2,\dots,n,
\end{equation*}
where the random variables $\bm \xi = (\xi_i)_{i=1}^n$ follow the standard normal distribution and $\sigma$ denotes the noise strength. To solve problem \eqref{eqn:dis-optim-dis}-\eqref{eqn:dis-weak-conduc}, we employ the steepest descent method. Now we describe the algorithm
for problem \eqref{eqn:conti-optim-prob-conduc}--\eqref{eqn:conti-weak-conduc}. To compute the gradient $J'_\gamma(q)$ of $J_\gamma(q)$, we employ the adjoint state: find $v(q)\in H_0^1\II$ such that
\begin{equation}\label{eqn:adj}
(q\nabla v(q),\nabla\varphi) = 2(u(q)-\bm \xi,\varphi)_n, \quad\forall \varphi\in H^1_0\II.
\end{equation}
Then the gradient $g\equiv J_\gamma'(q)\in W^{1,4}_0\II$ is given by the solution to the following $4$-Laplace equation
\begin{equation}\label{eqn:grad}
	 -\nabla\!\cdot\!\big(|\nabla g|^2\nabla g\big)= 8\gamma \|q\|_{W^{1,4}\II}^4\big(\!-\!\nabla\!\cdot\!( |\nabla q|^2\nabla q) + q^3\big) \!-\! \nabla u(q)\!\cdot\! \nabla v(q),
\end{equation}
Due to the nonlinearity of \eqref{eqn:grad}, we employ the Newton iteration with the initial guess $g^0$ given by the solution of the Poisson equation (with a zero Dirichlet boundary) below
\begin{equation*}
    -\Delta g^0 = 8\gamma \|q\|_{W^{1,4}\II}^4\big(\!-\!\nabla\!\cdot\!( |\nabla q|^2\nabla q) + q^3\big) \!-\! \nabla u(q)\!\cdot\! \nabla v(q).
\end{equation*}
To examine the convergence behavior of the discrete approximation $q_h^*$, we employ the errors
\begin{equation*}
e_q:= \|q^\dag - q_h^*\|_{L^2\II}/\|q^\dag\|_{L^2\II}\quad\mbox{and}\quad e_u:=\|u^\dag - u_h(q_h^*)\|_n.
\end{equation*}
Throughout we choose the regularization strength $\gamma$ according to the theoretical guideline
\begin{equation}\label{eqn:opt-gamma}
    \gamma^{\frac34} = O(\sigma n^{-\frac12} \rho_0^{-\frac72}), \ \mbox{with}\ \rho_0= \|q^\dag\|_{W^{1,4}\II} + \sigma n^{-\frac12}.
\end{equation}
\begin{example}\label{exam:opt-gamma}
Let $q^\dagger=1.0+\frac12\sin(\pi x)\sin(\pi y)$ and $f\equiv1$. Fix the number $n$ of sampling points at $201^2$, and a mesh size $h=\frac{1}{25}$ small enough to neglect the FEM error. Consider two noise levels: $\sigma=5.0\%$ and $\sigma=1.0\%$. Then we solve problem \eqref{eqn:dis-optim-dis}-\eqref{eqn:dis-weak-conduc} with $\gamma\in\{10^{-7},10^{-7.5},10^{-8.0},10^{-8.5},10^{-9},10^{-9.5},10^{-10}\}$.
\end{example}

Fig. \ref{fig:opt-gamma} shows the errors $e_q$ and $e_u$ at different $\gamma$ values. The optimal parameters are $10^{-8.5}$ and $10^{-9}$ for $\sigma=5.0\%$ and $\sigma=1.0\%$, respectively. Both values are close to the automatically determined $3.32$e-9 ($\sigma=5.0\%$) and $3.89$e-10 ($\sigma=1.0\%$) by the \textit{a priori} choice \eqref{eqn:opt-gamma}. This observation confirms the effectiveness of the \textit{a priori} choice \eqref{eqn:opt-gamma}.

\begin{figure}[hbt!]
\centering
\setlength{\tabcolsep}{0pt}
\begin{tabular}{cc}
\includegraphics[width=0.48\textwidth]{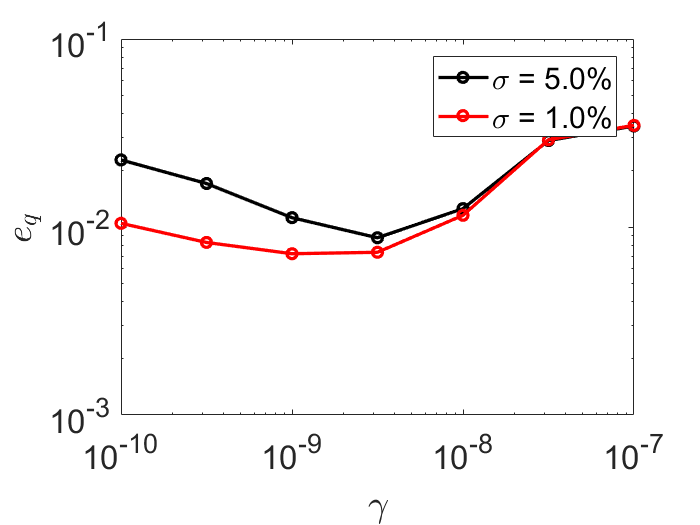} & \
\includegraphics[width=0.48\textwidth]{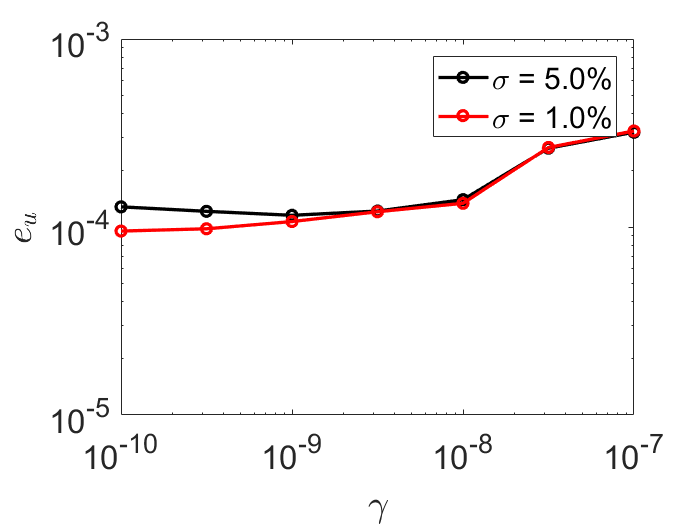} 
\end{tabular}
\caption{The numerical results on the optimal $\gamma^*$ at two noise levels: the error $e_q$ versus the parameter $\gamma$ {\rm(}left{\rm)}; the error $e_u$ versus the parameter $\gamma$ {\rm(}right{\rm)}.}
\label{fig:opt-gamma}
\end{figure}

\begin{example}\label{exam:rate}
Let $\gamma$ be given by \eqref{eqn:opt-gamma} and $h = O(\gamma^\frac14)$. Consider two noise levels: $\sigma=5.0\%$ and $\sigma=1.0\%$ with the number $n$ of sampling points varying in the set $\{51^2,101^2,201^2,401^2\}$. Consider the following two settings:
\begin{enumerate}
    \item[(a)] $q^\dagger=1.0+\frac12\sin(\pi x)\sin(\pi y)$ and $f\equiv1$;
    \item[(b)] $q^\dagger=1.0+\frac12\sin(\pi x)\sin(\pi y)w$ and $f\equiv1$;
\end{enumerate}
with the function $w= e^{-10((x-\frac14)^2 + (y-\frac14)^2)}+e^{-10((x-\frac34)^2 + (y-\frac34)^2)}$.
\end{example}
The exact conductivity $q^\dag$ in (a) involves one single bump but that in (b) involves multiple bumps. For both noise levels ($\sigma=5.0\%$ and $\sigma=1.0\%$), the metrics $e_q$ and $e_u$ in Table \ref{tab:rate} exhibit a steady convergence to zero as the number $n$ of sampling points tends to infinity. Fig. \ref{fig:rate} illustrates the numerical approximation $q_h^*$ for the two conductivity profiles, with the sampling points $n$ taken at $201^2$ (middle column) and $401^2$ (last column). In both cases, the pointwise errors concentrate near the bumps of the state $u^\dag$. Notably, the numerical results in case (b) captures well the multiple bump contours.

\begin{table}[hbt!]
\caption{Numerical results for Example \ref{exam:rate}}
\centering
\begin{threeparttable}
\begin{minipage}[b]{0.48\textwidth}
\centering
\subcaption*{(a) with $\sigma=5.0\%$.}
\begin{tabular}{c|ccccc}
\toprule
$n$ & $51^2$ & $101^2$ & $201^2$ & $401^2$ & trend \\ \midrule
$\gamma$ & 4.45e-7 & 1.79e-7 & 7.16e-8 & 2.85e-8 &  \\ \midrule
$e_q$ & 6.68e-2 & 3.77e-2 & 3.23e-2 & 2.40e-2 & $\searrow$ \\
$e_u$ & 7.16e-4 & 3.85e-4 & 2.97e-4 & 2.17e-4 & $\searrow$ \\
\bottomrule
\end{tabular}
\end{minipage}
\hfill
\begin{minipage}[b]{0.48\textwidth}
\centering
\subcaption*{(a) with $\sigma=1.0\%$.}
\begin{tabular}{c|ccccc}
\toprule
$n$ & $51^2$ & $101^2$ & $201^2$ & $401^2$ & trend \\ \midrule
$\gamma$ & 5.21e-8 & 2.10e-8 & 8.37e-9 &  3.33e-9 &  \\ \midrule
$e_q$ & 3.13e-2  & 2.28e-2 & 1.61e-2 &  9.77e-3 & $\searrow$ \\
$e_u$ & 2.75e-4   & 2.18e-4 & 1.73e-4 &  1.32e-4 & $\searrow$ \\ \bottomrule
\end{tabular}
\end{minipage}

\begin{minipage}[b]{0.48\textwidth}
\centering
\subcaption*{(b) with $\sigma=5.0\%$.}
\begin{tabular}{c|ccccc}
\toprule
$n$ & $51^2$ & $101^2$ & $201^2$ & $401^2$ & trend \\ \midrule
$\gamma$ & 4.42e-8 & 1.78e-8 & 7.11e-9 & 2.83e-9 &  \\
\midrule
$e_q$ & 4.29e-2& 3.24e-2 & 2.13e-2 & 1.39e-2 & $\searrow$ \\
$e_u$ & 2.30e-4 & 1.73e-4 & 1.06e-4 & 8.35e-5 & $\searrow$ \\
\bottomrule
\end{tabular}
\end{minipage}
\hfill
\begin{minipage}[b]{0.48\textwidth}
\centering
\subcaption*{(b) with $\sigma=1.0\%$.}
\vspace{0.5em}
\begin{tabular}{c|ccccc}
\toprule
$n$ & $51^2$ & $101^2$ & $201^2$ & $401^2$ & trend \\ \midrule
$\gamma$ & 5.17e-9 & 2.08e-9 & 8.31e-10 & 3.31e-10  &  \\ \midrule
$e_q$ & 2.07e-2 & 1.57e-2 & 1.23e-2 & 1.02e-2 & $\searrow$ \\
$e_u$ & 1.01e-4 & 8.72e-5 & 7.32e-5 & 6.96e-5 & $\searrow$ \\ \bottomrule
\end{tabular}
\end{minipage}
\end{threeparttable}
\label{tab:rate}
\end{table}

\begin{figure}[hbt!]
\centering
\setlength{\tabcolsep}{0pt}
	\begin{tabular}{ccc}
		\includegraphics[width=0.32\textwidth]{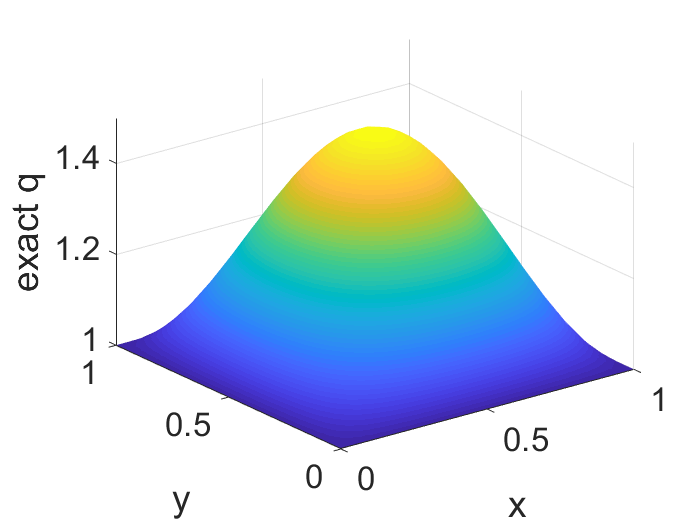} & \includegraphics[width=0.32\textwidth]{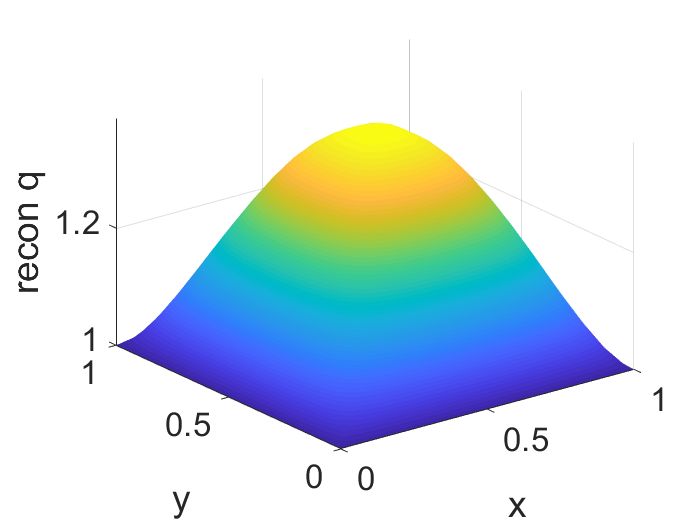} &
		\includegraphics[width=0.32\textwidth]{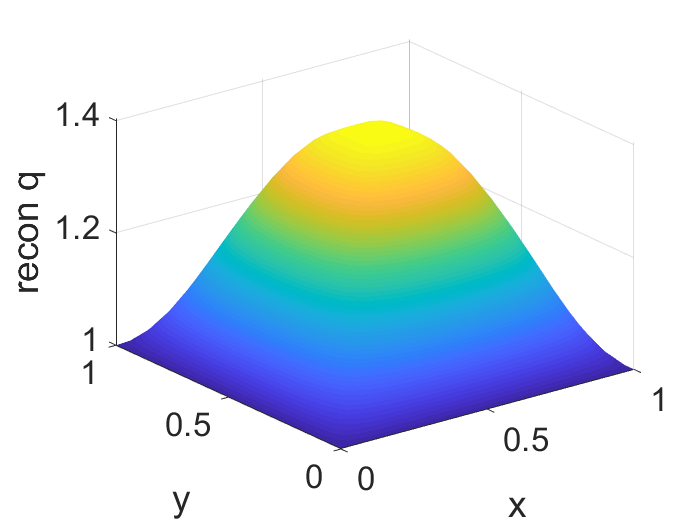} \\
		 (a): exact & $\gamma = 7.16$e-8, $n=201^2$ & $\gamma = 2.85$e-8, $n=401^2$
	\end{tabular}
\begin{tabular}{ccc}
		\includegraphics[width=0.32\textwidth]{Fig/exact.png} & \includegraphics[width=0.32\textwidth]{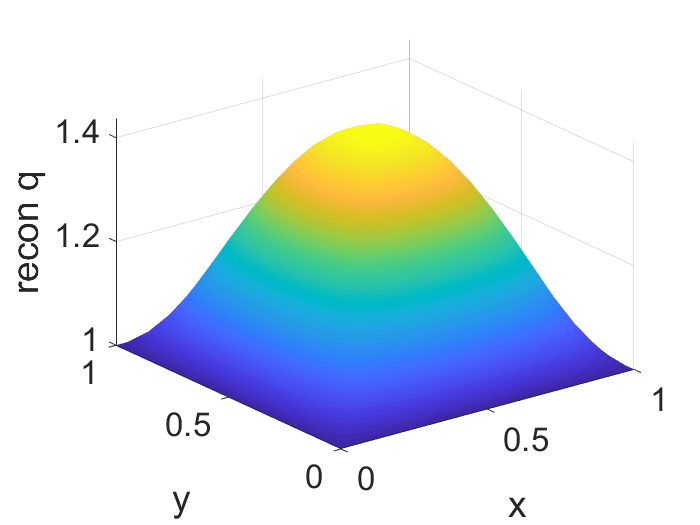} &
		\includegraphics[width=0.32\textwidth]{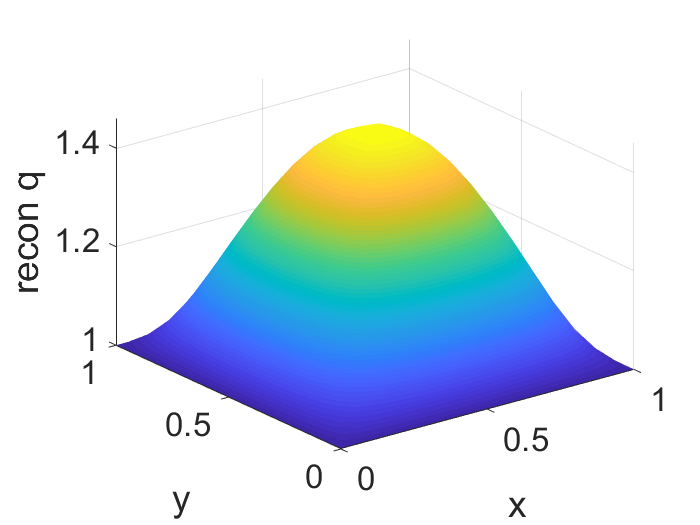} \\
		 (a): exact & $\gamma = 8.37$e-9, $n=201^2$ & $\gamma = 3.33$e-9, $n=401^2$
	\end{tabular}
    \begin{tabular}{ccc}
		\includegraphics[width=0.32\textwidth]{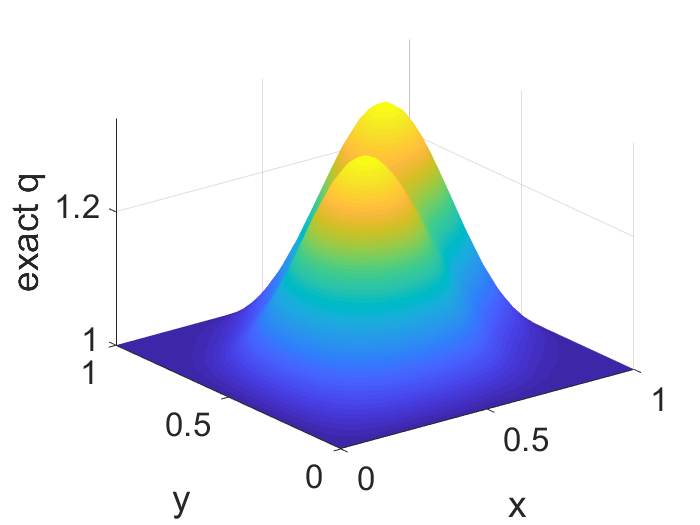} & \includegraphics[width=0.32\textwidth]{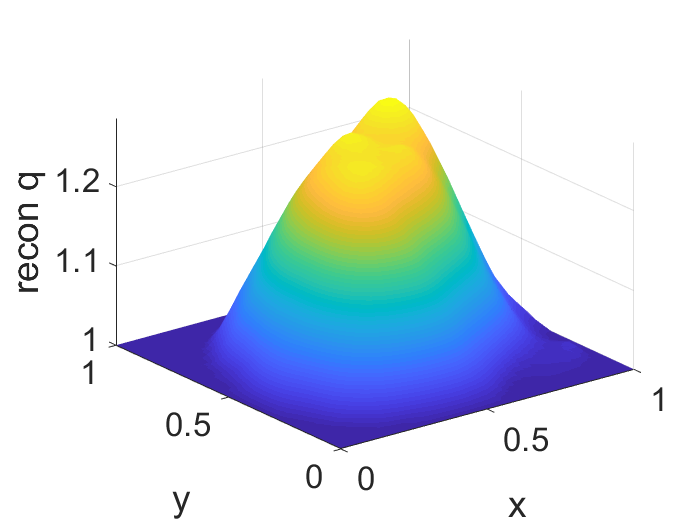} &
		\includegraphics[width=0.32\textwidth]{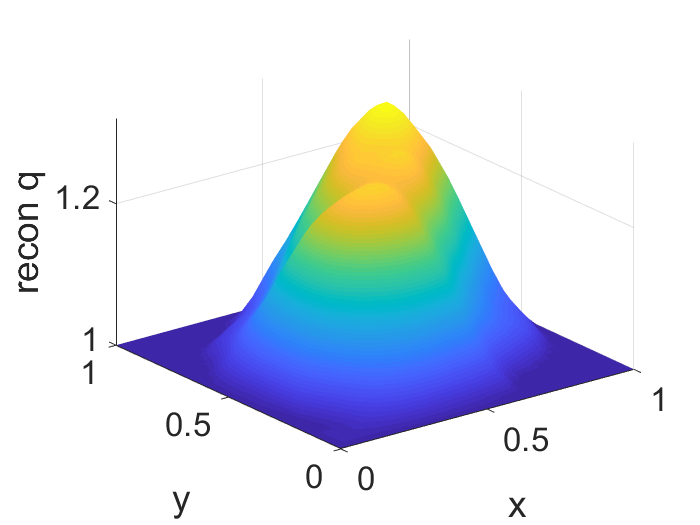} \\
		 (b): exact & $\gamma = 7.11$e-9, $n=201^2$ & $\gamma = 2.83$e-9, $n=401^2$
	\end{tabular}
    \begin{tabular}{ccc}
		\includegraphics[width=0.32\textwidth]{Fig/exact-plus.png} & \includegraphics[width=0.32\textwidth]{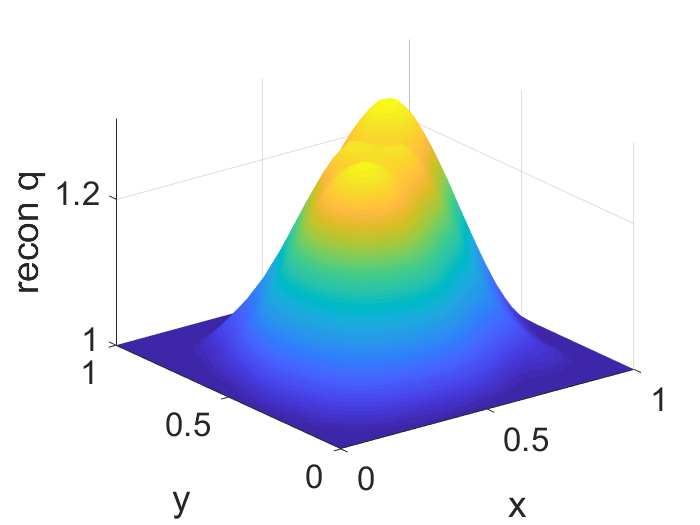} &
		\includegraphics[width=0.32\textwidth]{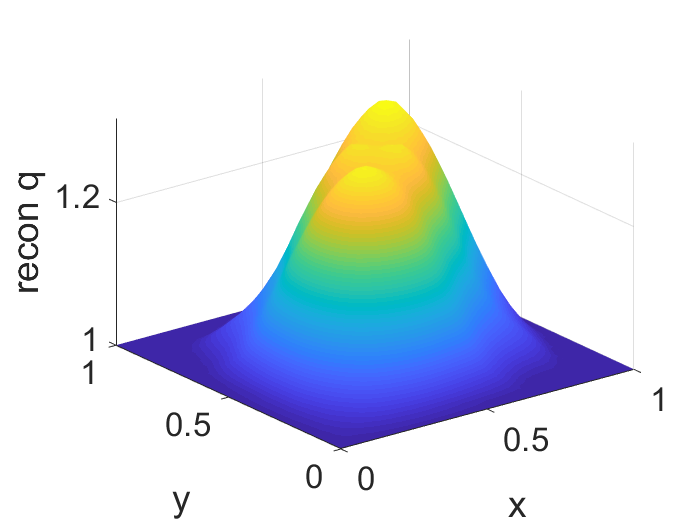} \\
		 (b): exact & $\gamma = 8.31$e-10, $n=201^2$ & $\gamma = 3.31$e-10, $n=401^2$
	\end{tabular}
	\caption{The reconstruction $q_{h}^*$ of Example \ref{exam:rate} for $\sigma=5.0\%$ (top) and $\sigma=1.0\%$ (bottom).}
    \label{fig:rate}
\end{figure}

\appendix

\section{Preliminaries on sub-Gaussian random variables}
\label{sec:pre}
In this appendix, we review basic theory of sub-Gaussian random variables. First we give the definition of sub-Gaussian random variables and useful properties of empirical processes. The notation $\mathbb{E}$ denotes taking expectation with respect to the involved random variables.

\begin{definition}\label{def:sub-Gau}
A random variable $Z$ is sub-Gaussian with parameter $\sigma$ if it satisfies
\begin{equation*}
\mathbb{E}\bigg[\exp(\lambda (Z - \mathbb E[Z]))\bigg] \leq \exp\bigg(\frac12\sigma^2\lambda^2\bigg),\quad \forall\lambda\in\mathbb{R}.
\end{equation*}
The probability distribution function of a sub-Gaussian random variable $Z$ has an exponentially decaying tail:
\begin{equation*}
\mathbb{P}(|Z - \mathbb{E} [Z]|\ge z)\leq 2\exp\bigg(-\frac{z^2}{2\sigma^2}\bigg),\quad\forall z>0.
\end{equation*} 
\end{definition}

Next we define  the Orlicz norm. Let $\psi$ be a monotonically increasing convex function on $\mathbb{R}_+\cup\{0\}$ with $\psi(0)=0$, 
the Orlicz norm $\|Z\|_\psi$ of a random variable $Z$ is given by
\begin{equation*}
	\|Z\|_{\psi} = \inf\biggl\{c>0:\mathbb{E}\Big[\psi\Big(\frac{|Z|}c\Big)\Big]\le 1\biggr\}.
\end{equation*}
Throughout, we choose $\psi_2(t):=\exp(t^2)-1$ for $t\geq0$. Then we have the following statements \cite[pp. 643-644]{chen2018stochastic}.
\begin{lemma}\label{lem:bdd-sub-Gaussian}
	For any sub-Gaussian random variable $Z$ with $\|Z\|_{\psi_2} < \infty$, the following estimate holds:
	\begin{equation}\label{ineq:proba-distrib-estimate}
		\mathbb{P}(|Z|\ge z)\le 2\exp\bigg(-\frac{z^2}{\|Z\|_{\psi_2}^2}\bigg),\quad \forall z>0.
	\end{equation}
       Conversely, if a random variable $Z$ satisfies
	\begin{equation*}
		\mathbb{P}(|Z|>\alpha (1+z))\leq c_1\exp\big(-c_2^2z^2\big), \quad \forall \alpha>0, ~~z\ge 1,
	\end{equation*}
	for some constants $c_1,c_2>0$. Then there exists $c_3>0$ depending on $c_1$ and $c_2$ such that 
	\begin{equation}\label{ineq:orlicz-bound}
		\|Z\|_{\psi_2}\le c_3\alpha.
	\end{equation}
\end{lemma}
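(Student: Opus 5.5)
The first estimate \eqref{ineq:proba-distrib-estimate} should follow from Markov's inequality applied to the convex function $\psi_2$. The converse \eqref{ineq:orlicz-bound} should follow from writing $\mathbb{E}[\psi_2(|Z|/C)]$ as a layer-cake integral against the tail of $|Z|$ and choosing $C=c_3\alpha$ large. Sub-Gaussianity of $Z$ is not really needed beyond finiteness of the relevant norm. In the converse I read the hypothesis as holding for one fixed $\alpha>0$, which is how it is used in the paper.

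\emph{First part.} Let $K=\|Z\|_{\psi_2}<\infty$ and take any $K'>K$. By the definition of the Orlicz norm and the monotonicity of $c\mapsto \mathbb{E}[\psi_2(|Z|/c)]$, we have $\mathbb{E}[\exp(Z^2/K'^2)]\le 2$. Markov's inequality applied to the nonnegative variable $\exp(Z^2/K'^2)$ gives
\begin{equation*}
\mathbb{P}(|Z|\ge z)=\mathbb{P}\big(e^{Z^2/K'^2}\ge e^{z^2/K'^2}\big)\le 2e^{-z^2/K'^2}.
\end{equation*}
Letting $K'\downarrow K$ gives \eqref{ineq:proba-distrib-estimate}. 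By monotone convergence the infimum is attained, so one may equally take $K'=K$ directly.

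\emph{Converse.} Fix $C>0$, to be taken as a multiple of $\alpha$. By Fubini, for $\psi_2(t)=e^{t^2}-1$,
\begin{equation*}
\mathbb{E}\big[\psi_2(|Z|/C)\big]=\int_0^\infty \frac{2t}{C^2}e^{t^2/C^2}\,\mathbb{P}(|Z|>t)\,{\rm d}t .
\end{equation*}
I split this integral at $t=2\alpha$.
\begin{itemize}
\item On $[0,2\alpha]$, bound the probability by $1$. The contribution is then at most $e^{4\alpha^2/C^2}-1$, which is $\le \frac12$ once $C^2\ge 4\alpha^2/\log\frac32$.
\item For $t>2\alpha$, write $t=\alpha(1+z)$. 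Then $z\ge1$ and $z\ge t/(2\alpha)$, so the hypothesis yields $\mathbb{P}(|Z|>t)\le c_1\exp(-c_2^2t^2/(4\alpha^2))$.
\end{itemize}
Now additionally require $C^2\ge 8\alpha^2/c_2^2$. Then the integrand on $(2\alpha,\infty)$ is bounded by $c_1\frac{2t}{C^2}\exp(-c_2^2t^2/(8\alpha^2))$. This integrates to at most $8c_1\alpha^2/(c_2^2C^2)$, which is $\le\frac12$ for $C^2\ge 16c_1\alpha^2/c_2^2$.

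Taking $c_3^2=\max\{4/\log\frac32,\ 8/c_2^2,\ 16c_1/c_2^2\}$ and $C=c_3\alpha$ gives $\mathbb{E}[\psi_2(|Z|/C)]\le1$. Hence $\|Z\|_{\psi_2}\le c_3\alpha$.

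The only mildly delicate step is converting the shifted tail $\alpha(1+z)$ into a clean Gaussian tail in $t$. The inequality $z\ge t/(2\alpha)$ for $t\ge 2\alpha$ handles this, and everything else is routine bookkeeping of constants.
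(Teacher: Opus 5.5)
Your proof is correct: the Markov-inequality step gives the tail bound. In the converse, the layer-cake computation with the split at $t=2\alpha$ and $c_3^2=\max\{4/\log\frac32,\,8/c_2^2,\,16c_1/c_2^2\}$ checks out. Your fixed-$\alpha$ reading of the hypothesis is the meaningful one, and the argument also covers the literal ``for all $\alpha>0$'' version.

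The paper gives no proof of its own and only cites \cite[pp.~643--644]{chen2018stochastic}. Your argument (Markov for the first part, Fubini/layer-cake for the converse) is the standard one behind that citation, cf.\ \cite[Section 2.2]{vanDerVaart:1996}.
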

 
\begin{definition}
Let $\mathbb{T}$ be a semimetric space with a semimetric $\rbd$. The random process $\{Z_t:t\in \mathbb{T}\}$ is sub-Gaussian if
\begin{equation*}
\mathbb{P}(|Z_s-Z_t|>z)\le 2\,\exp\bigg( -\frac{z^2}{2\,\rbd(s,t)^2} \bigg),\quad \forall s,t\in \mathbb{T}, ~~z>0.
\end{equation*}
\end{definition}
\begin{definition}
  A collection of points $(t_i)_{i=1}^n \subset \mathbb{T}$ is called an
	$\epsilon$-cover of $\big(\mathbb{T},\rbd\big)$ if for any $t\in \mathbb{T}$, there
	exists at least one $i \in \{1,\dots,n\}$ such that $\rbd(t, t_i) \leq \epsilon$. The
	$\epsilon$-covering number $N(\epsilon,\mathbb{T},\rbd)$ is the minimum
	cardinality among all $\epsilon$-cover of $\mathbb{T}$.  
\end{definition}

We use the following maximal inequality frequently \cite[Section 2.2.1]{vanDerVaart:1996}.
\begin{lemma}\label{lem:max-ineq}
	Let $\{Z_t:t\in \mathbb{T}\}$ be a separable sub-Gaussian random process. Then there holds
	\begin{equation*}
		\Big\|\sup_{s,t\in \mathbb{T}}|Z_s-Z_t|\Big\|_{\psi_2}\leq c\int^{\diam\, \mathbb{T}}_0\sqrt{\log N\big(\tfrac12\epsilon, \mathbb{T},\rbd\big)}~{\rm d}\epsilon.
	\end{equation*}
\end{lemma}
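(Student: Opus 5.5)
This is the standard chaining (Dudley entropy integral) bound, and I would prove it by the usual chaining construction. Fix $\mathbb{T}$, the semimetric $\rbd$, and $D=\diam\,\mathbb{T}$. For $k\ge0$ I would choose a minimal $D2^{-k}$-cover $T_k\subset\mathbb{T}$, so that $|T_k|=N(D2^{-k},\mathbb{T},\rbd)$, with $T_0$ a single point $t_0$. For each $t$, let $\pi_k(t)\in T_k$ be a nearest point, so that $\rbd(t,\pi_k(t))\le D2^{-k}$. By separability it suffices to bound the supremum over a countable dense set, and then over finite subsets $S$ with a constant independent of $S$; monotone convergence then finishes. For finite $S$ and $t\in S$, the chain $Z_t-Z_{t_0}=\sum_{k\ge1}(Z_{\pi_k(t)}-Z_{\pi_{k-1}(t)})$ terminates once $D2^{-k}$ is below the minimal separation of $S$, since we may then take $\pi_k(t)=t$.

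The key ingredient is a finite maximal inequality. If $X_1,\dots,X_m$ satisfy $\|X_i\|_{\psi_2}\le a$, then $\|\max_i|X_i|\|_{\psi_2}\le c\,a\sqrt{\log(1+m)}$. I would prove this directly from Jensen and the convexity of $\psi_2$: $\psi_2(\mathbb{E}\max|X_i|/b)\le\sum_i\mathbb{E}\psi_2(|X_i|/b)$. Alternatively, use the union bound $\mathbb{P}(\max|X_i|>z)\le 2m e^{-z^2/a^2}$ together with the converse part of Lemma~\ref{lem:bdd-sub-Gaussian}. The sub-Gaussian process definition gives $\|Z_s-Z_t\|_{\psi_2}\le c\,\rbd(s,t)$, again by Lemma~\ref{lem:bdd-sub-Gaussian}. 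The increments at level $k$ range over at most $|T_k||T_{k-1}|\le N(D2^{-k})^2$ pairs, each with $\rbd(\pi_k(t),\pi_{k-1}(t))\le 3D2^{-k}$. Hence
\begin{equation*}
\Big\|\max_{t}|Z_{\pi_k(t)}-Z_{\pi_{k-1}(t)}|\Big\|_{\psi_2}\le cD2^{-k}\sqrt{\log(1+N(D2^{-k},\mathbb{T},\rbd))}.
\end{equation*}

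Summing over $k$ by the triangle inequality for $\|\cdot\|_{\psi_2}$ gives $\|\sup_t|Z_t-Z_{t_0}|\|_{\psi_2}\le c\sum_k D2^{-k}\sqrt{\log(1+N(D2^{-k}))}$. Since $N$ is nonincreasing in $\epsilon$, each term is dominated by a constant times $\int_{D2^{-k-1}}^{D2^{-k}}\sqrt{\log(1+N(\epsilon/2))}\,{\rm d}\epsilon$ (reindexing absorbs the factor $\tfrac12$). Finally, $\sup_{s,t}|Z_s-Z_t|\le2\sup_t|Z_t-Z_{t_0}|$.

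The main obstacle is the $\log(1+N)$ versus $\log N$ discrepancy near $\epsilon=D$, where $N$ may equal $1$ and $\log N=0$. I would handle it by noting that if $N(\epsilon/2)\ge2$ on the relevant range, then $\log(1+N)\le 2\log N$. The only level where $N(\epsilon/2)$ can equal $1$ is the first one; if $N(\epsilon/2)=1$ for $\epsilon$ close to $D$, that level contributes a term of size $D$ that must still be absorbed by the integral. This is the subtle point, and I would resolve it by starting the chain at the scale $D/2$: there $N(D/4)\ge2$ unless $\mathbb{T}$ is nearly a point, and in the degenerate case the left-hand side is directly $\le cD$. Measurability of the supremum is guaranteed by separability.
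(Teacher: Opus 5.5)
Your plan is the standard chaining proof. The paper gives no proof of its own and cites van der Vaart--Wellner, where the bound is proved by the same chaining through maximal separated nets, stated with packing numbers $D(\epsilon)$, and converted to the present form via $D(\epsilon)\le N(\frac12\epsilon,\mathbb{T},\rbd)$. The architecture is right, but three steps do not work as written.

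First, the termination claim fails. Your $T_k$ are covers of $\mathbb{T}$, so a point $t\in S$ need not belong to any $T_k$, and you cannot set $\pi_k(t)=t$. For finite $S$, build the nets inside $S$, as covers of $S$ with centres in $S$ or as maximal $D2^{-k}$-separated subsets of $S$. Their cardinality satisfies $N(\epsilon,S,\rbd)\le N(\frac12\epsilon,\mathbb{T},\rbd)$: each ball of radius $\frac12\epsilon$ from a cover of $\mathbb{T}$ that meets $S$ lies in a ball of radius $\epsilon$ centred at a point of $S$. This is where the factor $\frac12$ in the statement is actually used, rather than being absorbed by reindexing. The level-$k$ term becomes $cD2^{-k}\sqrt{\log(1+N(D2^{-k-1},\mathbb{T},\rbd)^2)}$. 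Using the third point below, this is at most $c\int_{D2^{-k-1}}^{D2^{-k}}\sqrt{\log N(\frac12\epsilon,\mathbb{T},\rbd)}\,{\rm d}\epsilon$.

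Second, your Jensen display only gives $\mathbb{E}\max_i|X_i|\le a\sqrt{\log(1+m)}$. That is an $L^1$ bound, and it cannot be summed into a $\psi_2$ bound on the supremum. Your union-bound alternative does work, so use it. With $\alpha=a\sqrt{\log(1+m)}$ and $z\ge1$,
$$\mathbb{P}\bigl(\max_i|X_i|>\alpha(1+z)\bigr)\le 2m(1+m)^{-(1+z)^2}\le 2e^{-z^2\log 2},$$
and the converse part of Lemma~\ref{lem:bdd-sub-Gaussian} yields $\|\max_i|X_i|\|_{\psi_2}\le c\alpha$.

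Third, the obstacle you single out does not arise. If $\epsilon<\diam\,\mathbb{T}$, then $N(\frac12\epsilon,\mathbb{T},\rbd)\ge 2$, because a single ball of radius $\frac12\epsilon$ has diameter at most $\epsilon$. Hence $\log(1+N)\le 2\log N$ on the whole range of integration. The integral is therefore at least $\sqrt{\log 2}\,\diam\,\mathbb{T}$, and the first chaining level is absorbed like every other. Your fallback claim that in a degenerate case the left-hand side is directly $\le cD$ is false in general: a supremum of infinitely many increments, each of $\psi_2$-size at most $cD$, need not be $O(D)$. With the observation above, you never need it.
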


The next two lemmas give bounds on the covering number for Sobolev subsets and finite dimensional subsets \cite{birman1967piecewise,geer2000empirical}, respectively.

\begin{lemma}\label{lem:cover-en-bound-Sobolev}
	Let $Q$ be the unit cube in $\mathbb{R}^d$ and $SW^{s,p}(Q)$ be the unit sphere of space $W^{s,p}(Q)$ for $s> 0$ and $p\ge 1$. Then for small $\epsilon>0$, there holds  
	\begin{equation*}
		\log N(\epsilon, SW^{s,p}(Q), \|\cdot\|_{L^q(Q)})\le c\epsilon^{-\frac ds},
	\end{equation*}
	where $ 1\le q\le\infty$ for $sp>d$ and $1\le q <p(1-sp/d)^{-1}$ for $sp\leq d$.
\end{lemma}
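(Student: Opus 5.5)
This is the classical metric entropy estimate of Birman--Solomyak \cite{birman1967piecewise}. I would prove it by piecewise polynomial approximation on dyadic partitions of the cube $Q$. It suffices to bound the covering number of the unit ball of $W^{s,p}(Q)$, which contains the unit sphere.

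\textbf{Step 1: local approximation.} For a dyadic subcube $\Delta\subset Q$ of side $\delta$, let $P_\Delta v$ be a polynomial of degree less than $s$ (averaged Taylor polynomial or $L^2(\Delta)$ projection). A Bramble--Hilbert argument combined with Sobolev embedding on the reference cube, then rescaled to $\Delta$, gives
\begin{equation*}
\|v-P_\Delta v\|_{L^q(\Delta)}\le c\,|\Delta|^{\frac sd-\frac1p+\frac1q}\,|v|_{W^{s,p}(\Delta)}.
\end{equation*}
The embedding $W^{s,p}\hookrightarrow L^q$ used here requires exactly the admissibility conditions on $q$ in the statement: any $q$ when $sp>d$, and $q<p(1-sp/d)^{-1}$ when $sp\le d$. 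These conditions also make the exponent $\varkappa:=\frac sd-\frac1p+\frac1q$ positive.

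\textbf{Step 2: adaptive partition.} A uniform partition is enough when $p\ge q$. When $p<q$, uniform partitioning loses logarithmic or polynomial factors, so I would use the Birman--Solomyak adaptive scheme. For each $v$ with $\|v\|_{W^{s,p}}\le1$, repeatedly bisect the cubes $\Delta$ for which $|\Delta|^{\varkappa}|v|_{W^{s,p}(\Delta)}$ exceeds a threshold. Because $\sum_\Delta |v|^p_{W^{s,p}(\Delta)}\le1$ is superadditive, a counting argument shows this produces a partition with $O(m)$ cubes and piecewise polynomial error $\lesssim m^{-s/d}$ in $L^q(Q)$. Choosing $m\sim\epsilon^{-d/s}$ gives error at most $\epsilon/2$. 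I expect this step to be the main obstacle, in particular making the threshold and counting argument work with the correct exponents for $p<q$.

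\textbf{Step 3: counting.} The number of admissible partitions with $m$ cubes, drawn from a dyadic tree, is at most $e^{cm}$. For a fixed partition, the piecewise polynomials with bounded coefficients form a ball in a space of dimension $\le cm$. A standard volumetric bound covers this ball by $(c/\epsilon)^{cm}$ points at $L^q$-accuracy $\epsilon/2$; the coefficient bounds come from the local estimates and $\|v\|_{W^{s,p}}\le1$. This gives
\begin{equation*}
\log N\le cm\log(c/\epsilon),
\end{equation*}
which carries a spurious logarithm. To remove it, I would run the approximation multilevel: approximate at dyadic accuracies $2^{-j}$ and quantize the successive differences between levels with coarser precision, so that the level sums form a convergent geometric series. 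This yields $\log N(\epsilon)\le c\epsilon^{-d/s}$ for small $\epsilon$.

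Alternatively, one may simply cite \cite{birman1967piecewise} and \cite[Ch.~2]{geer2000empirical}, where exactly this bound is established.
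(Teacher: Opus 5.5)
The paper gives no proof of its own; it cites \cite{birman1967piecewise,geer2000empirical}, and your outline is the Birman--Solomyak argument behind that citation (local Bramble--Hilbert estimates, adaptive dyadic partitions from superadditivity of $|v|^p_{W^{s,p}(\Delta)}$, and multilevel encoding to remove the $\log(1/\epsilon)$), so it is correct and consistent with the paper. One bookkeeping point: your piecewise-polynomial net lies outside $SW^{s,p}(Q)$, whereas the paper's covering number requires centers in the set, and converting costs only the usual factor $2$ in $\epsilon$.
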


\begin{lemma}\label{lem:cover-en-bound-finite}
	Let $G\subset L^2\II$ be a finite dimensional subspace of dimension $N_G=\dim(G)>0$, and $G_R=\{g\in G: \|g\|_{L^2\II}\le R\}$. Then for small $\epsilon>0$, there holds 
	\begin{equation*}
		\log N(\epsilon,G_R,\|\cdot\|_{L^2\II})\leq N_G\log(1+4R\epsilon^{-1}).
	\end{equation*}
\end{lemma}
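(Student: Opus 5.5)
We will prove the bound with the standard volumetric (packing) argument, after reducing to Euclidean space.

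First, we reduce to $\mathbb{R}^{N_G}$. Choose an $L^2\II$-orthonormal basis $\{g_1,\dots,g_{N_G}\}$ of $G$, for instance by Gram--Schmidt. The coordinate map $T:\mathbb{R}^{N_G}\to G$, $T(a)=\sum_{j}a_jg_j$, is a linear bijection satisfying $\|T(a)\|_{L^2\II}=\|a\|_{\ell^2}$. Hence $T$ is an isometry from $(\mathbb{R}^{N_G},\|\cdot\|_{\ell^2})$ onto $(G,\|\cdot\|_{L^2\II})$, and it maps the closed Euclidean ball $B_R$ of radius $R$ onto $G_R$. Covering numbers are invariant under isometries, so it suffices to show
\begin{equation*}
N(\epsilon,B_R,\|\cdot\|_{\ell^2})\le (1+4R\epsilon^{-1})^{N_G}.
\end{equation*}

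Next, we use a maximal separated set. Let $\{a_1,\dots,a_M\}\subset B_R$ be a maximal family with $\|a_i-a_k\|_{\ell^2}>\epsilon$ for all $i\neq k$. Such a family is finite by the volume bound below, so one can be built greedily. By maximality, every $a\in B_R$ lies within distance $\epsilon$ of some $a_i$. Thus $\{a_i\}$ is an $\epsilon$-cover, and $N(\epsilon,B_R,\|\cdot\|_{\ell^2})\le M$.

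Finally, we bound $M$ by comparing volumes. The open balls $B(a_i,\epsilon/2)$ are pairwise disjoint, and each is contained in $B_{R+\epsilon/2}$. Comparing Lebesgue measures, and using that the volume of a ball in $\mathbb{R}^{N_G}$ scales like the $N_G$-th power of its radius, gives
\begin{equation*}
M(\epsilon/2)^{N_G}\le (R+\epsilon/2)^{N_G}.
\end{equation*}
Hence $M\le(1+2R\epsilon^{-1})^{N_G}\le(1+4R\epsilon^{-1})^{N_G}$. Taking logarithms yields the claimed bound. The extra factor $2$ in the constant $4$ leaves room for a variant where the cover is required to be $\epsilon/2$-fine, or where centres may lie outside $G_R$.

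No step is a real obstacle. The one point that needs care is that the norm used for the covering is exactly the $L^2\II$ norm, which is what makes the orthonormal-basis identification an isometry. For a different norm one would first need a norm-equivalence step, such as Lemma \ref{lem:dis-equiv-n-L2}. The restriction to small $\epsilon>0$ in the statement is not needed for this argument.
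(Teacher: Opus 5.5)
Your proof is correct and follows the standard argument: the paper gives no proof of its own and cites the literature (van de Geer, Birman--Solomyak), where the bound is obtained the same way, by identifying $G$ isometrically with $(\mathbb{R}^{N_G},\|\cdot\|_{\ell^2})$ through an orthonormal basis and then applying the volumetric packing estimate. Your version gives the sharper bound $(1+2R\epsilon^{-1})^{N_G}$ for every $\epsilon>0$, which implies the stated one, and you are right that the restriction to small $\epsilon$ is not needed.
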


\bibliographystyle{abbrv}
\bibliography{reference}

\end{document}